\theoremstyle{definition}
\newtheorem{nul}{}[section]
\newtheorem{dfn}[nul]{Definition}
\newtheorem{rmk}[nul]{Remark}
\newtheorem{cnstr}[nul]{Construction}
\newtheorem{cnv}[nul]{Convention}
\newtheorem{ntn}[nul]{Notation}
\newtheorem{exm}[nul]{Example}
\newtheorem{rec}[nul]{Recollection}
\newtheorem{qst}[nul]{Question}
\newtheorem{prb}[nul]{Problem}
\newtheorem*{dfn*}{Definition}
\newtheorem*{axm*}{Axiom}
\newtheorem*{ntn*}{Notation}
\newtheorem*{exm*}{Example}
\newtheorem*{exr*}{Exercise}
\newtheorem*{int*}{Intuition}
\newtheorem*{qst*}{Question}
\newtheorem*{rmk*}{Remark}
\theoremstyle{plain}
\newtheorem{thm}[nul]{Theorem}
\newtheorem{prop}[nul]{Proposition}
\newtheorem{lem}[nul]{Lemma}
\newtheorem{cor}[nul]{Corollary}
\newtheorem*{thm*}{Theorem}
\newtheorem*{prop*}{Proposition}
\newtheorem*{cor*}{Corollary}
\newtheorem*{lem*}{Lemma}
\newtheorem*{cnj*}{Conjecture}
\let\oldwidetilde\widetilde
\protected\def\widetilde{\oldwidetilde}
\DeclareMathOperator{\Map}{\mathrm{Map}}
\DeclareMathOperator{\Hom}{\mathrm{Hom}}
\DeclareMathOperator{\Ss}{\mathbb{S}}
\DeclareMathOperator{\F}{\mathbb{F}}
\DeclareMathOperator{\QQ}{\mathbb{Q}}
\DeclareMathOperator{\C}{\mathcal{C}}
\DeclareMathOperator{\CP}{\mathbb{CP}}
\DeclareMathOperator{\E}{\mathbb{E}}
\DeclareMathOperator{\MU}{\mathrm{MU}}
\DeclareMathOperator{\Spec}{\text{Spec}}
\DeclareMathOperator{\pic}{\mathrm{pic}}
\DeclareMathOperator{\Pic}{\mathrm{Pic}}
\DeclareMathOperator{\Ext}{\mathrm{Ext}}
\newcommand{\wt}{\widetilde}
\newcommand{\MSpin}{\mathrm{MSpin}}
\newcommand{\MString}{\mathrm{MString}}
\newcommand{\tmf}{\mathrm{tmf}}
\newcommand{\CC}{\mathbb{C}}
\newcommand{\Z}{\mathbb{Z}}
\def\Einf{\mathbb{E}_\infty}
\def\Hinf{\mathbb{H}_\infty}
\def\Spf{\mathrm{Spf}}
\def\MX{\mathrm{MX}}
\def\BC{\mathrm{BC}}
\def\BU{\mathrm{BU}}
\def\H{\mathrm{H}}
\def\Or{\mathrm{Or}}
\def\OO{\mathcal{O}}
\def\TMF{\mathrm{TMF}}
\def\Tmf{\mathrm{Tmf}}
\def\tmf{\mathrm{tmf}}
\def\M{\mathcal{M}}
\def\m{\mathfrak{m}}
\def\L{\mathcal{L}}
\def\R{\mathbb{R}}
\def\G{\mathbb{G}}
\def\Th{\mathrm{Th}}
\def\sm{\mathrm{sm}}
\def\Guniv{\mathbb{G}^{\mathrm{univ}}}
\def\univ{\mathrm{univ}}
\def\top{\mathrm{top}}
\def\cusp{\mathrm{cusp}}
\def\KU{\mathrm{KU}}
\def\bu{\mathrm{bu}}
\def\ku{\mathrm{ku}}
\def\gl{\mathrm{gl}}
\def\bgl{\mathrm{bgl}}
\def\bsl{\mathrm{bsl}}
\def\MUP{\mathrm{MUP}}
\def\Gal{\mathrm{Gal}}
\DeclarePairedDelimiter\abs{\lvert}{\rvert}%
\let\oldabs\abs
\def\abs{\@ifstar{\oldabs}{\oldabs*}}
\let\oldtocsection=\tocsection
\let\oldtocsubsection=\tocsubsection
\let\oldtocsubsubsection=\tocsubsubsection
\renewcommand{\tocsection}[2]{\hspace{0em}\oldtocsection{#1}{#2}}
\renewcommand{\tocsubsection}[2]{\hspace{1em}\oldtocsubsection{#1}{#2}}
\renewcommand{\tocsubsubsection}[2]{\hspace{2em}\oldtocsubsubsection{#1}{#2}}
\newcommand{\NB}[1]{\todo[color=gray!40]{#1}}
\newcommand{\TODO}[1]{\todo[color=red]{#1}}
\newcommand{\NB}[1]{}
\newcommand{\TODO}[1]{}
\renewcommand{\todo}[1]{}
\renewcommand{\todo}[1]{}
\title{Obstruction theory and the level $n$ elliptic genus}
\author{Andrew Senger}
\address{Department of Mathematics, Harvard University, Cambridge, MA, USA}
\email{senger@math.harvard.edu}
\begin{document}
\begin{abstract}
  Given a height $\leq 2$ Landweber exact $\Einf$-ring $E$ whose homotopy is concentrated in even degrees, we show that any complex orientation of $E$ which satisfies the Ando criterion admits a unique lift to an $\Einf$-complex orientation $\MU \to E$.
  As a consequence, we give a short proof that the level $n$ elliptic genus lifts uniquely to an $\Einf$-complex orientation $\MU \to \tmf_1 (n)$ for all $n \geq 2$.
\end{abstract}
\maketitle

\setcounter{tocdepth}{1}
\tableofcontents
\vbadness 5000


\section{Introduction} \label{sec:intro}
Complex-oriented ring spectra play a central role in the chromatic approach to homotopy theory.
Given a homotopy associative ring spectrum $E$, recall that a complex orientation is a choice of class $u \in \wt{E}^2 (\CP^\infty)$ with the property that its restriction along $S^2 \cong \CP^1 \hookrightarrow \CP^\infty$ is the unit $1 \in E^0 (\ast) \cong \wt{E}^{2} (S^2)$.
A complex orientation determines an isomorphism of graded rings
\[E^* (\CP^\infty) \cong E^* [\![u]\!].\]

Complex orientations may also be described in terms of the complex cobordism spectrum $\MU$:  complex orientations of $E$ are in natural bijection with maps of homotopy associative ring spectra $\MU \to E$.

The complex cobordism spectrum $\MU$ admits much more structure than that of a homotopy associative ring spectrum: it is an $\Einf$-ring spectrum.
When $E$ also admits the structure of an $\Einf$-ring spectrum, it is natural to ask whether a given complex orientation is induced by a map of $\Einf$-ring spectra
\[\MU \to E.\]
We will call such maps $\Einf$-complex orientations.
The $\Einf$-complex orientations of an $\Einf$-ring may be viewed as particularly canonical complex orientations.

Indeed, many of the most familiar (not necessarily complex) orientations admit lifts to $\Einf$-orientations.
For example, the Atiyah--Bott--Shapiro orientations \cite{ABS}
\[\MSpin \to \mathrm{ko} \hspace{1cm} \text{and} \hspace{1cm} \MSpin^{\mathbb{C}} \to \mathrm{ku}.\]
were refined to $\Einf$-orientations by Joachim \cite{Joachim}, who gave an explicit geometric construction of such an $\Einf$-orientation.
Indeed, one expects that any orientation of geometric origin may, with enough care, be refined to an $\Einf$-orientation.
A more sophisticated example is the Ando--Hopkins--Rezk $\Einf$-String orientation of the connective $\Einf$-ring of topological modular forms $\tmf$ \cite{AHR}
\[\MString \to \tmf,\]
which refines the Witten genus \cite{Witten87,Witten86}.
While it is expected that this $\Einf$-orientation has a geometric origin, and much work has gone into developing such a viewpoint (for a small sampling, see \cite{ST1, ST2, Costello1, Costello2, Nets, BE}), such a description has so far remained elusive.
In \Cref{thm:main-elliptic} below, we will prove that the Hirzebruch level $n$ elliptic genera \cite{Hirz}\cite[\S 5]{Witten86} for $n \geq 2$ may be lifted to $\Einf$-complex orientations
\[\MU \to \tmf_1 (n).\]
%
%
%
\subsection{The Ando criterion}

An algebraic approximation of what it means for a complex orientation $\MU \to E$ to be $\Einf$ is given by the \textit{Ando criterion}.
It asks that the complex orientation be compatible in a suitable sense with the power operations in $E$.
In many cases, the Ando criterion is equivalent to the property that that the complex orientation $\MU \to E$ be a map of $\Hinf$-ring spectra.


  Now let $E$ denote an $\Einf$, or more generally an $\Hinf$-ring spectrum, with a fixed complex orientation $u$.
  \begin{ntn}
    Given a complex vector bundle $V \to X$ of dimension $d$, we let $t_u (V) \in E^{2d} (\Th(V))$ denote the Thom class of $V$.
  \end{ntn}

  Fix a prime $p$, let $\rho$ denote the vector bundle over $\BC_p$ corresponding to the complex regular representation of $C_p$, and let $\gamma_1$ denote the tautological bundle over $\CP^\infty$.
  Let $I_{tr} \subset E^* (\BC_p)$ denote the transfer ideal.
  Recall from \cite[\S 7]{HL} that there are additive power operations
  \[\Psi_u : E^{2*} (\Th(\gamma_1)) \to E^{2p*} (\Th(\rho \boxtimes \gamma_1))/I_{tr}.\]

\begin{dfn}
  We say that a complex orientation of $E$ satisfies the Ando criterion at the prime $p$ if
  \[\Psi_u (t_u (\gamma_1)) = t_u (\rho \boxtimes \gamma_1)\]
  in $E^{2p} (\Th(\rho \boxtimes \gamma_1))/I_{tr}$.
  We say that a complex orientation of $E$ satisfies the Ando criterion if it satisfies the Ando criterion for all primes $p$.
\end{dfn}

\begin{rmk}
  If $E$ is $p$-local, then $E^* (\BC_\ell) / I_{tr} = 0$ for all primes $\ell \neq p$.
  Therefore a complex orientation of a $p$-local $\Einf$-ring satisfies the Ando criterion if and only if it satisfies the Ando criterion at $p$.
\end{rmk}

The complex cobordism spectrum $\MU$, equipped with the canonical complex orientation, satisfies the Ando criterion.
It follows that any $\Einf$-complex orientation, or more generally any $\Hinf$-complex orientation, satisfies the Ando criterion.

\subsection{Results}

The first main theorem of this paper states that for many $\Einf$-rings of height $\leq 2$, this condition is sufficient, and that the resulting $\Einf$-complex orientations are determined up to homotopy by their underlying complex orientations:

\begin{thm} \label{thm:main-obst}
  Let $E$ denote a height $\leq 2$ Landweber exact $\Einf$-ring whose homotopy is concentrated in even degrees. Then any complex orientation $\MU \to E$ which satisfies the Ando criterion lifts uniquely up to homotopy to an $\Einf$-ring map $\MU \to E$:
\end{thm}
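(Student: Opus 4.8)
The plan is to set up an obstruction-theoretic spectral sequence computing the homotopy type of the space $\mathrm{Map}_{\Einf}(\MU, E)$ of $\Einf$-maps, and to show that both the obstruction groups to existence (beyond the given complex orientation) and the obstruction groups to uniqueness vanish under the height $\leq 2$ and evenness hypotheses. The key structural input is that $\MU$ is the Thom spectrum of the $\Einf$-map $\mathrm{BU} \to \mathrm{BGL}_1(\mathbb{S})$ (equivalently $\MU = \mathbb{S}[\mathrm{BU}_\otimes]$ in Thom-spectrum language), so that $\Einf$-ring maps $\MU \to E$ are equivalent to $\Einf$-maps of grouplike $\mathbb{E}_\infty$-spaces, i.e. to infinite-loop maps $\bu \to \gl_1(E)$ lifting the underlying orientation data. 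I would therefore replace the mapping space $\mathrm{Map}_{\Einf}(\MU, E)$ by the space of infinite-loop lifts in the relevant fiber sequence of spectra and compute with the associated (Bousfield--Kan / obstruction) spectral sequence, whose input is the $\mathrm{Ext}$-groups $\mathrm{Ext}^{s}_{?}(\bu, \Sigma^{-t}\gl_1(E))$ over the appropriate category (spectra, or better, a category of spectra with power operations so that the Ando criterion can be fed in).

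Concretely, I would proceed as follows. First, reduce to the $p$-complete (indeed $K(1)\vee K(2)$-local, after rationalization is handled separately) situation prime by prime, using that a Landweber exact even ring of height $\leq 2$ is assembled from its rationalization and its $K(1)$- and $K(2)$-localizations via a fracture square; the evenness hypothesis guarantees there are no exotic contributions from the gluing. Second, at a fixed prime $p$, identify the relevant $\Einf$-mapping space with the totalization of a cosimplicial space built from the (derived) comparison of $\MU$ with its $K(1)\vee K(2)$-localization, and show that the Goerss--Hopkins / André--Quillen obstruction groups controlling the lift are computed by topological André--Quillen cohomology $\mathrm{TAQ}^{s,t}(E/\mathbb{S}; \text{(module built from }\bu))$, or dually by an $\mathrm{Ext}$ over the Dyer--Lashof algebra acting on $E_*$. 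Third, feed in the Ando criterion: it precisely says that the first obstruction --- the primary obstruction living in degree matching the power-operation compatibility --- vanishes, so the orientation lifts to $\Hinf$ and one needs only the higher obstructions. Fourth, show these higher obstruction groups vanish because $E_*$ is concentrated in even degrees and has height $\leq 2$: the relevant cohomology is that of a polynomial-type algebra (the height $\leq 2$ condition forces the formal group data, hence the power operations, to be controlled by a one- or two-dimensional ``Lubin--Tate-like'' situation where the André--Quillen cohomology is concentrated in a single internal degree range), and evenness kills all the odd rows, collapsing the spectral sequence so that $\pi_0$ of the lifting space is a torsor computed in even degrees and $\pi_1$ (controlling uniqueness) vanishes.

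I would organize the uniqueness statement as the assertion that $\mathrm{Map}_{\Einf}(\MU, E) \to \mathrm{Map}_{\text{c.or.}}(\MU, E)$ has discrete, in fact trivial-on-$\pi_0$-fibers-over-a-point, which again reduces to vanishing of the same obstruction groups in one degree lower; the evenness hypothesis is what makes existence-obstruction-vanishing in degree $t$ and uniqueness-obstruction-vanishing in degree $t+1$ happen simultaneously, since one of the two is always in an odd row. For the bookkeeping at the prime $2$, where $K(1)$-local power operations are subtler, I would use the known structure of the Dyer--Lashof action on $\KU^\wedge_2$ and on Lubin--Tate theory at height $2$ rather than attempting a direct chain-level computation.

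The main obstacle I expect is the second step: identifying the $\Einf$-obstruction groups for maps \emph{out of} $\MU$ in a form where the Ando criterion is visibly the vanishing of the primary obstruction, and where height $\leq 2$ plus evenness visibly kills the rest. Maps out of a Thom spectrum are governed by the cotangent complex / topological André--Quillen homology of $\MU$ relative to $\mathbb{S}$, which is $\bu$ (suitably shifted), so the obstruction theory is in principle ``$\mathrm{Ext}^{s,t}(\bu, \gl_1 E)$ in spectra-with-$\mathbb{E}_\infty$-power-operations''; but making the passage to a computable algebraic model --- presumably an $\mathrm{Ext}$ over a (completed) Dyer--Lashof-type algebra, or a suitable sheaf cohomology over the moduli of formal groups of height $\leq 2$ --- and checking the requisite concentration is the technical heart. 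Once that algebraic model is in place, the actual vanishing should follow from the observations that (i) a height $\leq 2$ Landweber-exact even ring corresponds to an affine formal-group datum whose associated cohomology has cohomological dimension $\leq 1$ over each chromatic layer, and (ii) everything is concentrated in even internal degrees, so the obstruction spectral sequence is a first-quadrant (or horizontal-line) spectral sequence that degenerates.
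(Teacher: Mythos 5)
Your high-level picture---reduce via a fracture square, use an obstruction theory for $\Einf$-maps out of $\MU$, feed in the Ando criterion as the primary obstruction, and kill the rest by evenness---is the right shape, and the ABGHR picture of $\Einf$-orientations as null-homotopies of $\bu \to \bgl_1(E)$ is indeed how the paper handles uniqueness. But the technical heart of your proposal, which you yourself flag as ``the main obstacle,'' is left unresolved, and the route you sketch to resolve it is not the one the paper takes and is likely substantially harder.

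Concretely, you propose to use Goerss--Hopkins / topological Andr\'e--Quillen obstruction theory, identifying the obstruction groups as some $\Ext$ over a Dyer--Lashof-type algebra and then arguing that height $\leq 2$ plus evenness collapses the spectral sequence. The paper explicitly does \emph{not} do this: it uses the Hopkins--Lawson tower $\Ss \to \MX_1 \to \MX_2 \to \cdots \to \MU$. There the Ando criterion is not a vague ``primary obstruction'' but exactly the condition for extending an $\Einf$-map from $\MX_1$ to $\MX_p$, and the remaining obstruction (after $p$-completion and the height $\leq 2$ reduction) is a single group $E^1(\Sigma^\infty F_{p^2})$. This group is killed not by a formal ``odd rows vanish'' argument over the moduli of formal groups, but by a chain of nontrivial theorems: Arone--Lesh identify $\Sigma^\infty F_{p^k}$ with a summand of a Thom spectrum over $B\Gamma_k$ (an extraspecial-type $p$-group), and then Tezuka--Yagita, Schuster--Yagita, and Yagita provide the evenness of $K(n)^*(B\Gamma_k^{(i)})$ for $k \leq 2$, which transports to $E$ via Ravenel--Wilson--Yagita/Strickland. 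Your claim that the relevant Andr\'e--Quillen cohomology is ``concentrated in a single internal degree range'' because of height $\leq 2$ is not something you can get for free; the paper's own Problem on Goerss--Hopkins obstruction groups at height $\geq 3$ indicates that computing those groups is open, and Balderrama (whose Goerss--Hopkins approach is the closest existing analogue of what you propose) handles only \emph{periodic} orientations $\MUP \to E$ and does not obtain uniqueness at height $2$.

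Your uniqueness sketch is also under-specified. The paper does not rerun the same obstruction theory ``in one degree lower''; instead it proves injectivity of $[\bu, \gl_1(E)] \to [\Sigma^\infty\CP^\infty, \gl_1(E)]$ directly by splitting $\gl_1(E)^\wedge_p$ via its $L_{K(1)\oplus K(2)}$-localization, using the coconnectivity result of Ando--Hopkins--Rezk for the fiber, and then a Bousfield--Kuhn-functor argument together with a pro-freeness/torsion-freeness lemma to control $[\bu, L_{K(1)\oplus K(2)}\gl_1(E)]$. The phrase ``evenness kills the odd row controlling $\pi_1$'' is not how the uniqueness is actually secured; it comes from torsion-freeness of a specific mapping group together with the rational equivalence $\Sigma^\infty\CP^\infty \to \bu$. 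So: right setting (ABGHR, $p$-completion, chromatic fracture), but the decisive computations are different in kind, and the place you identify as the obstacle is exactly where the paper substitutes a different obstruction theory with a concrete group-cohomological input.
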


In the above theorem, we say that a Landweber exact ring spectrum $E$ is of height $\leq n$ if $v_n \in \pi_* (E) / (p, v_1, \dots, v_{n-1})$ is a unit for all primes $p$.\footnote{It may happen that $\pi_* (E) / (p, v_1, \dots, v_{n-1})$ is the zero ring. In this case, our convention is to regard the unique element of the zero ring as a unit.}
As a corollary, we obtain the following result for height $\leq 2$ Lubin--Tate theories:

\begin{cor}
  Let $k \subseteq \overline{\F}_p$ denote a field of characteristic $p > 0$ which is algebraic over the prime field $\F_p$, and let $\G$ denote a formal group of height $\leq 2$ over $k$.
  Then any complex orientation of the associated $2$-periodic Morava $K$-theory $K(k,\G)$ lifts uniquely up to homotopy to an $\Einf$-complex orientation of $E(k, \G)$
  \[\MU \to E(k, \G).\]
\end{cor}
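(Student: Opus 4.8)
The plan is to deduce the corollary from \Cref{thm:main-obst} applied to the Lubin--Tate $\Einf$-ring $E = E(k,\G)$. This requires two things: checking that $E(k,\G)$ satisfies the hypotheses of \Cref{thm:main-obst}, and matching up the complex orientations of $E(k,\G)$ which satisfy the Ando criterion with the complex orientations of $K(k,\G)$.

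First I would verify the hypotheses. The ring $E(k,\G)$ is an $\Einf$-ring by the Goerss--Hopkins--Miller theorem (alternatively, by Lurie's construction of Lubin--Tate theory); its homotopy ring $\pi_* E(k,\G) \cong \mathbb{W}(k)[\![u_1, \dots, u_{h-1}]\!][u^{\pm 1}]$, where $h$ is the height of $\G$ and $|u| = -2$, is concentrated in even degrees; and it is Landweber exact, since the universal deformation of $\G$ over $\pi_0 E(k,\G)$ satisfies Landweber's exactness criterion. To see that $E(k,\G)$ has height $\leq 2$ in the sense of \Cref{thm:main-obst}: at the prime $p$, since $h \leq 2$, the ring $\pi_* E(k,\G)/(p, v_1)$ is $k[u^{\pm 1}]$ with $v_2$ a generator when $h = 2$ and is the zero ring when $h \leq 1$, so $v_2$ is a unit in either case; at a prime $\ell \neq p$, the ring $\pi_* E(k,\G)/(\ell)$ vanishes because $\pi_0 E(k,\G)$ is $p$-complete, and so $v_2$ is a unit there by the convention of the footnote to \Cref{thm:main-obst}.

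The crux is the second step, which uses the theory of power operations in Lubin--Tate theory going back to Ando: a complex orientation of $E(k,\G)$ satisfies the Ando criterion if and only if the corresponding coordinate on the universal deformation is norm-coherent, and restriction along the closed point $\Spec k \hookrightarrow \Spec \pi_0 E(k,\G)$ identifies the set of norm-coherent coordinates on the universal deformation with the set of coordinates on $\G$ over $k$, i.e.\ with the set of complex orientations of $K(k,\G)$ (whose underlying formal group is $\G$). Concretely, what is needed is that for each complex orientation $\bar u$ of $K(k,\G)$ there is a \emph{unique} complex orientation of $E(k,\G)$ which satisfies the Ando criterion and reduces to $\bar u$. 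I expect this to be the main point to nail down precisely.

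Granting both steps, the corollary is formal. Given $\bar u$, let $u$ denote the unique Ando-criterion complex orientation of $E(k,\G)$ reducing to $\bar u$; by \Cref{thm:main-obst} it lifts uniquely up to homotopy to an $\Einf$-complex orientation $\phi \colon \MU \to E(k,\G)$, and the composite $\MU \xrightarrow{\phi} E(k,\G) \to K(k,\G)$ realizes $\bar u$, which proves existence. For uniqueness, suppose $\phi_0, \phi_1 \colon \MU \to E(k,\G)$ are $\Einf$-complex orientations whose composites with $E(k,\G) \to K(k,\G)$ both realize $\bar u$. Their underlying complex orientations satisfy the Ando criterion --- an $\Einf$-complex orientation is in particular an $\Hinf$-complex orientation, hence satisfies the Ando criterion --- and reduce to $\bar u$, so both coincide with $u$ by the second step; the uniqueness clause of \Cref{thm:main-obst} then gives $\phi_0 \simeq \phi_1$.
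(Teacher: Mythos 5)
Your approach is the same as the paper's, and you have correctly isolated the one nontrivial input: that restriction along the closed point $\Spec k \hookrightarrow \Spec \pi_0 E(k,\G)$ gives a bijection from complex orientations of $E(k,\G)$ satisfying the Ando criterion (equivalently, norm-coherent coordinates on the universal deformation) to complex orientations of $K(k,\G)$. You flag this as ``the main point to nail down precisely'' but do not nail it down; the paper supplies it by citing \cite[Theorem 1.2]{Zhu}, which proves exactly this statement for $k$ algebraic over $\F_p$ --- and this is also where that hypothesis on $k$ enters, which your write-up otherwise never uses. Your characterization via norm-coherence is the correct one and is the language in which Zhu works. With that citation supplied, your verification of the hypotheses of \Cref{thm:main-obst} and the final formal deduction go through as written.
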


\begin{proof}
  This follows immediately from \Cref{thm:main-obst} and \cite[Theorem 1.2]{Zhu}, which implies that every complex orientation of $K(k,\G)$ admits a unique lift to a complex orientation of $E(k,\G)$ which satisfies the Ando criterion whenever $k$ is algebraic over $\F_p$.
\end{proof}

\begin{rmk}
  \Cref{thm:main-obst} was inspired by recent work of Balderrama \cite[Theorem 6.5.3]{Bald}. Balderrama shows that every periodic complex orientation of a Lubin--Tate theory of height $\leq 2$ satisfying an analogue of the Ando criterion lifts to an $\Einf$-orientation.
  He also shows that $\Einf$-refinements of periodic complex orientations of even periodic $K(1)$-local $\Einf$-rings exist whenever the Ando criterion is satisfies and are unique up to homotopy.

  The key observation he makes is the presence of evenness in the Goerss--Hopkins obstruction theory for (periodic) $\Einf$-complex orientations, which implies that the obstructions to existence and uniqueness appearing in his theorem vanish for formal reasons.
  Our results about existence of $\Einf$-complex orientations will be obtained by observing a similar evenness in the Hopkins--Lawson obstruction theory \cite{HL}.

  In contrast to \Cref{thm:main-obst}, Balderrama does not obtain any uniqueness results at height $2$ \cite[Remark 6.5.4]{Bald}. In \Cref{rmk:periodic-unique}, we will prove that $\Einf$-refinements of periodic complex orientations of height $2$ Lubin--Tate theories are unique.
\end{rmk}

Our second main theorem uses \Cref{thm:main-obst} to give a simple proof of the following theorem:

\begin{thm} \label{thm:main-elliptic}
  For $n \geq 2$, the Hirzebruch level $n$ elliptic genus lifts uniquely up to homotopy to a map of $\Einf$-rings
  \[\MU \to \tmf_1 (n).\]
\end{thm}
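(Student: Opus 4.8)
The strategy is to apply Theorem~\ref{thm:main-obst} to $E = \tmf_1(n)$ for each $n \geq 2$. This requires three things: (i) that $\tmf_1(n)$ is a Landweber exact $\Einf$-ring whose homotopy is concentrated in even degrees; (ii) that it has height $\leq 2$; and (iii) that the level $n$ elliptic genus is a complex orientation of $\tmf_1(n)$ satisfying the Ando criterion. Once these are in place, Theorem~\ref{thm:main-obst} immediately produces the desired unique lift.

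**Step 1: $\tmf_1(n)$ as an even Landweber exact $\Einf$-ring.** Recall that $\tmf_1(n)$ is the global sections of the structure sheaf $\OO^{\top}$ of the derived moduli stack $\mathcal{M}_1(n)$ of elliptic curves with a chosen point of exact order $n$. For $n \geq 2$ this moduli problem is representable by an affine scheme (after inverting $n$, or rather, the relevant moduli stack $\mathcal{M}_1(n)$ over $\Z[1/n]$ is affine), so $\tmf_1(n) = \OO^{\top}(\mathcal{M}_1(n))$ and its homotopy groups are computed by the (concentrated-in-even-degrees) global sections of powers of the line bundle $\omega$. Evenness follows because $H^1$ vanishes on an affine scheme, so the descent spectral sequence collapses; this also identifies $\pi_* \tmf_1(n)$ with a ring of modular forms of level $n$, which is Landweber exact because the formal group of the universal elliptic curve is Landweber exact over this base. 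I would cite the standard references for the construction of $\tmf_1(n)$ and its evenness (e.g.\ work of Hill--Lawson, or the Goerss--Hopkins--Miller sheaf restricted to $\mathcal{M}_1(n)$).

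**Step 2: Height $\leq 2$.** This is the condition that $v_2$ acts invertibly on $\pi_* \tmf_1(n)/(p, v_1)$ for every prime $p$. Geometrically, this holds because an elliptic curve over a field of characteristic $p$ is either ordinary (height $1$) or supersingular (height $2$), so the formal group of the universal curve over $\mathcal{M}_1(n)$ never has height $\geq 3$; equivalently, $v_2$ is a unit mod $(p, v_1)$ since modulo the ordinary locus we are on the supersingular locus where $v_2$ detects the height. The one subtlety is primes $p$ dividing $n$: there $\mathcal{M}_1(n)$ is considered over $\Z[1/n]$, so $p$ is inverted and $\pi_* \tmf_1(n)/(p, v_1)$ is the zero ring, which counts as height $\leq 2$ by the footnote convention in the paper. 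For $p \nmid n$ one uses the good reduction of $\mathcal{M}_1(n)$ and the classical dichotomy.

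**Step 3: The Ando criterion.** The level $n$ elliptic genus is, by construction, the complex orientation associated to the Euler formal group law / the specific coordinate on the universal elliptic curve coming from the level structure (the sigma-function-type coordinate of Hirzebruch--Witten). That this particular orientation satisfies the Ando criterion is precisely the kind of statement established in the work of Ando--Hopkins--Strickland and Ando on the Ando criterion for elliptic cohomology: the relevant coordinate is the one whose associated cubical structure / theta function is compatible with the isogenies, and the Ando criterion at $p$ is exactly the $\mathbb{Z}/p$-isogeny compatibility. I would cite Ando's thesis / the Ando--Hopkins--Strickland paper and the identification of the Hirzebruch level $n$ genus with this orientation (Hirzebruch's original computation, or Ando--French--Ganter). \textbf{I expect Step~3 to be the main obstacle}, in the sense that it is the step requiring genuine input from elliptic-curve theory rather than bookkeeping: one must check that the specific orientation underlying the level $n$ elliptic genus — not just \emph{some} orientation of $\tmf_1(n)$ — is the one matching the power operations, which amounts to verifying that its logarithm / exponential is the one cut out by the $p$-isogeny relation on the universal curve with level structure. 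With all three inputs assembled, Theorem~\ref{thm:main-obst} applies verbatim and yields the unique $\Einf$-lift $\MU \to \tmf_1(n)$.
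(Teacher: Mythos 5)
There is a genuine gap in Step~1: you cannot apply \Cref{thm:main-obst} directly to $E = \tmf_1(n)$, because $\tmf_1(n)$ is neither Landweber exact nor of height $\leq 2$ in the sense of the theorem. Both conditions force a form of periodicity that the connective $\tmf_1(n)$ cannot have. For the height condition: if $v_2$ were a unit in $\pi_*\tmf_1(n)/(p,v_1)$, then multiplication by $v_2$ would be an isomorphism on this graded ring, which is impossible since $\pi_*\tmf_1(n)$ is concentrated in non-negative degrees while the quotient is nonzero (the supersingular locus of $\M_1(n)_{\F_p}$ is non-empty). For Landweber exactness: the sequence $p, v_1, v_2, v_3, \dots$ must be regular on $\pi_*\tmf_1(n)$, but $v_3 = 0$ in $\pi_*\tmf_1(n)$ and $\pi_*\tmf_1(n)/(p,v_1,v_2) \neq 0$, so $v_3$ acts as zero on a nonzero module and is therefore not regular. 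Note also that your description in Step~1 (global sections of $\OO^\top$ over the open, non-compactified $\M_1(n)$, homotopy given by $\Gamma(\M_1(n), \omega^i)$) is in fact a description of the periodic spectrum $\TMF_1(n)$, not of $\tmf_1(n)$; the latter is a connective spectrum built from the compactified stack $\overline{\M_1}(n)$ via $\Tmf_1(n)$.

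The paper explicitly flags this issue in the introduction (``While \Cref{thm:main-obst} does not apply directly to $\tmf_1(n)$, it does apply to $\TMF_1(n)$\ldots'') and routes around it. The actual proof has four parts. First, the Ando criterion is verified for the level-$n$ genus on $\TMF_1(n)$; this is done not by a direct formal-group computation but by reducing (via \Cref{prop:reduction-LT} and \Cref{lem:BCp-inj}) to checking the criterion after pushing forward to a Lubin--Tate theory $E(k,\widehat{E})$ at a supersingular point, where the Ando criterion becomes a statement about norms of $\Theta^1$-structures which is forced by the uniqueness of $\Theta^1$-structures on line bundles over elliptic curves (\Cref{lem:ell-theta-unique}). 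Second, \Cref{thm:main-obst} is applied to $\TMF_1(n)$, which \emph{is} Landweber exact, even, and height $\leq 2$ (\Cref{rmk:tmf1n-nice}), yielding a unique $\Einf$-lift $\MU \to \TMF_1(n)$. Third, this $\Einf$-orientation is lifted to $\Tmf_1(n)$ (\Cref{lem:lift}) using the Hill--Lawson pullback square presenting $\Tmf_1(n)$ in terms of $\TMF_1(n)$ and two height-$\leq 1$ $K$-theory-type spectra, to which \Cref{thm:main-obst} again applies. Fourth, it is lifted to $\tmf_1(n)$ (\Cref{lem:tT}) using $\bgl_1$-orientation theory and a splitting of $\bsl_1(\Tmf_1(n))$ that exists by the defining property of $\tmf_1(n)$. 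Your Step~3 intuition about the Ando criterion being the substantive input is reasonable, but your Steps~1--2 are where the argument breaks: without passing through $\TMF_1(n)$ and then descending, the obstruction-theoretic machinery simply does not engage.
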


While \Cref{thm:main-obst} does not apply directly to $\tmf_1 (n)$, it does apply to $\TMF_1 (n)$, and it is not hard to upgrade the resulting $\Einf$-complex orientation to one of $\tmf_1 (n)$.
This reduces us to verifying the Ando criterion, which may be done following the strategy of Ando--Hopkins--Strickland \cite{AHS}.

\begin{rmk}
  During the writing process, \cite{Absmeier} has also appeared, which follows the strategy of Ando--Hopkins--Rezk \cite{AHR} to prove a similar result to \Cref{thm:main-elliptic} for $\Tmf_1 (n)$.
  Our method is rather different from that of Absmeier and completely avoids the consideration of $p$-adic Eisenstein measures.
\end{rmk}

\subsection{Further questions}

One of key inputs in our proof of \Cref{thm:main-obst} is \Cref{thm:morava-K-even}, which states that the Morava $K$-theory of certain finite groups is concentrated in even degrees. This is closely related to these groups being \textit{good} in the sense of Hopkins--Kuhn--Ravenel \cite[\S 7]{HKR}.

These groups come in a family, and to see that the Hopkins--Lawson obstruction theory is concentrated in even degrees one would like to show that the entire family has Morava $K$-theory concentrated in even degrees, cf. \Cref{rmk:is-even}:

\begin{qst}
  Is the Morava $K$-theory of the groups $\Gamma_k ^{(n)}$ of \Cref{dfn:gammakn} concentrated in even degrees for $k > 2$, at least for $n \gg 0$?
  The groups $\Gamma_k ^{(1)}$ are the extraspecial $p$-groups of type $p^{1+2k} _+$.
\end{qst}

One could also ask about $C_2$-equivariant refinement of our results.

\begin{qst}
  The complex cobordism spectrum may be refined to a $C_2$-equivariant $\Einf$-ring $\MU_\R$, and $\tmf_1 (n)$ admits the natural structure of a $C_2$-equivariant $\Einf$-ring \cite[Theorem 2.20]{Meierleveln}.
  Moreover, the Hirzebruch level $n$ elliptic genus admits a refinement to a map of homotopy $C_2$-ring spectra $\MU_\R \to \tmf_1 (n)$ \cite[Theorem 3.5]{Meierleveln}.
  Is there a suitable $C_2$-equivariant analogue of \Cref{thm:main-obst} which may be used to prove a $C_2$-equivariant refinement of \Cref{thm:main-elliptic}?
  See \cite[Remark 13]{HL} for a comment on a $C_2$-equivariant version of the Hopkins--Lawson obstruction theory.
\end{qst}

On the other hand, it would be very interesting to study $\Einf$-complex orientations at heights $\geq 3$.
A natural choice of spectra to study would be Lubin--Tate spectra.
Since the obstruction groups don't vanish for formal reasons at these heights, it seems likely that this this will require an explicit analysis of the Goerss--Hopkins obstruction theory for $\Einf$-maps $\MU \to E(k,\G)$.
In particular, one would have to compute the $\mathrm{E}_2$-page.

\begin{prb} \label{prop:obst}
  Compute the Goerss--Hopkins obstruction groups for $\Einf$-maps $\MU \to E(k,\G)$ for height $\geq 3$ Lubin--Tate theories $E(k,\G)$.
\end{prb}
%

\subsection{Acknowledgements}
The author would like to thank Robert Burklund and Jeremy Hahn for useful comments on a draft.
During the course of this work, the author was supported by NSF Grants DGE-1745302 and DMS-2103236.

\section{Existence of $\Einf$-Orientations} \label{sec:obst}
In this section, we will prove the half of \Cref{thm:main-obst} concerning the existence of $\Einf$-complex orientations.
The main tool that we will utilize is an obstruction theory for $\Einf$-complex orientations studied by Hopkins--Lawson \cite{HL}.
Given this obstruction theory, the existence half of \Cref{thm:main-obst} reduces to the statement that certain obstruction groups vanish.
Using work of Arone--Lesh \cite{ALFiltered}, this can be further reduced to the evenness of the Morava $K$-theory of certain extraspecial $p$-groups and related groups, which we are able to extract from the literature.
%
%
\subsection{Hopkins--Lawson obstruction theory}
We begin by summarizing the main results of the Hopkins--Lawson obstruction theory \cite{HL}.
First, a definition:

\begin{dfn}
  Let $E$ denote a homotopy commutative ring.
  We let $\Or(E)$ denote the space of complex orientations of $E$, i.e. the fiber
  \[\Or(E) \to \Map (\Sigma^{\infty-2} \CP^\infty, E) \to \Map(\Sigma^{\infty-2} \CP^2, E) \simeq \Map(\Ss, E)\]
  above the unit map $\Ss \to E$.
\end{dfn}

\begin{thm} [{\cite[Theorems 1 \& 32]{HL}}] \label{thm:HL}
  There exists a diagram of $\mathbb{E}_\infty$-ring spectra
  \[\Ss \to \MX_1 \to \MX_2 \to \MX_3 \to \dots \to \MU\]
  such that the following hold:
  \begin{enumerate}
    \item The natural map $\varinjlim \MX_n \to \MU$ is an equivalence.
    \item The $\Einf$-ring $\MX_1$ is equipped with a natural complex orientation inducing an equivalence $\Map_{\Einf} (\MX_1, E) \xrightarrow{\sim} \Or(E)$ for each $\Einf$-ring $E$.
    \item Given $m > 0$ and an $\Einf$-ring $E$, there is a pullback square
      \begin{center}
        \begin{tikzcd}
          \Map_{\mathbb{E}_\infty} (\MX_m, E) \ar[r] \ar[d] & \Map_{\mathbb{E}_\infty} (\MX_{m-1} , E) \ar[d] \\
          \{\ast\} \ar[r] & \Map_* (F_m, \Pic(E)),
        \end{tikzcd}
      \end{center}
      where $F_m$ is a certain pointed space described below.
    \item The map $\MX_{m-1} \to \MX_m$ is a rational equivalence if $m > 1$, a $p$-local equivalence if $m$ is not a power of $p$, and a $K(n)$-local equivalence if $m > p^n$.
    \item Let $E$ denote an $\Einf$-ring such that $\pi_* E$ is $p$-local and $p$-torsion free. Then an $\Einf$-map $\MX_1 \to E$ extends to an $\Einf$-map $\MX_p \to E$ if and only if the corresponding complex orientation of $E$ satisfies the Ando criterion.
  \end{enumerate}
\end{thm}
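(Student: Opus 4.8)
The plan is to produce the whole tower $\Ss \to \MX_1 \to \MX_2 \to \cdots \to \MU$ by applying the symmetric monoidal, colimit‑preserving Thom spectrum functor to an exhaustive filtration of the $\Einf$‑space $\BU$ by grouplike $\Einf$‑spaces over $B\GL_1(\Ss)$, and then reading off (1)--(5) from the homotopy‑theoretic properties of the filtration subquotients. To construct $\MX_1$: a complex orientation of $E$ is a map of spectra $\Sigma^{\infty-2}\CP^\infty \to E$ restricting on the bottom cell $\Ss \simeq \Sigma^{\infty-2}\CP^1$ to the unit, and under the free$/$forgetful adjunction between spectra and $\Einf$‑rings such data is corepresented by the pushout $\MX_1 := \mathrm{Free}_{\Einf}(\Sigma^{\infty-2}\CP^\infty) \otimes_{\mathrm{Free}_{\Einf}(\Ss)} \Ss$, where the left‑hand map is induced by $\Ss \simeq \Sigma^{\infty-2}\CP^1 \hookrightarrow \Sigma^{\infty-2}\CP^\infty$ and the right‑hand map is the counit. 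This gives the natural equivalence $\Map_{\Einf}(\MX_1, E) \simeq \Or(E)$ and the tautological orientation of (2), and evaluating it at $E = \MU$ with its standard orientation produces the $\Einf$‑map $\MX_1 \to \MU$. Since Thomification interchanges the free $\Einf$‑ring functor with the free grouplike $\Einf$‑space functor, the same $\MX_1$ is the Thom spectrum of the free grouplike $\Einf$‑space on the pointed space $\CP^\infty$ (the infinite loop space $Q\CP^\infty$), taken relative to its basepoint and equipped with the reference map to $B\GL_1(\Ss)$ adjoint to the class of $\gamma_1 - 1$ in $\widetilde{\bu}^{0}(\CP^\infty)$ followed by the $J$‑homomorphism; this is the description I would carry forward.

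Next I would build the tower. The reference map $\CP^\infty \to \BU$ is surjective on $\pi_2$, and — since $\BU$ is generated by $\CP^\infty$ as a grouplike $\Einf$‑space — I would resolve its failure to be an equivalence by attaching $\Einf$‑cells, obtaining an $\omega$‑indexed tower $Y_1 \to Y_2 \to \cdots$ of grouplike $\Einf$‑spaces over $B\GL_1(\Ss)$ with $Y_1 \simeq Q\CP^\infty$ as above and $\colim Y_m \simeq \BU$, in which $Y_m$ is obtained from $Y_{m-1}$ by a single pushout along a map $\mathrm{Free}^{\mathrm{gp}}_{\Einf}(F_m) \to Y_{m-1}$ against the collapse $\mathrm{Free}^{\mathrm{gp}}_{\Einf}(F_m) \to \ast$, for a highly connected pointed space $F_m$ carrying a reference map to $B\GL_1(\Ss)$. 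The $F_m$ arise as (thickenings of) the subquotients of a symmetric‑power‑type filtration measuring the gap between $Y_1$ and $\BU$, with $F_{p^k}$ controlled by the $\Sigma_{p^k}$‑equivariant homotopy type of the partition complex $|\Pi_{p^k}|$ smashed with $(\CP^\infty)^{\wedge p^k}$. Applying $\Th(-)$ turns the tower into $\Ss \to \MX_1 \to \MX_2 \to \cdots$ with $\varinjlim \MX_m \simeq \Th(\BU \to B\GL_1(\Ss)) \simeq \MU$, which is (1); because $\Th(-)$ preserves pushouts, each stagewise pushout becomes a pushout of $\Einf$‑rings, and identifying the resulting mapping space with $\Map_*(F_m, \Pic(E))$ via the Thom‑spectrum/Picard dictionary gives the square of (3). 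Part (4) then follows from the connectivity and homology of these subquotients: the one attached at stage $m$ is rationally trivial for $m > 1$ (at stage $1$ the Thom isomorphism identifies $\pi_* \MX_1 \otimes \QQ$ with the free graded‑commutative $\QQ$‑algebra on $\widetilde{H}_*(\CP^\infty;\QQ)$, which is already $\pi_* \MU \otimes \QQ$), it is $p$‑locally trivial unless $m$ is a power of $p$ (subquotients of the symmetric‑power filtration of the sphere are $p$‑locally contractible away from prime powers), and it is $K(n)$‑locally trivial once $m > p^n$ (the Steinberg‑type summand $L(k)$ is $K(n)$‑acyclic for $k > n$).

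For (5), it remains to identify the stage‑$p$ obstruction. The space $F_p$ is built from the $C_p$‑transfer acting on the $p$‑fold smash power of the universal oriented line, so after base change to an $E$ with $\pi_* E$ $p$‑local and $p$‑torsion‑free — whence $\Pic(E)$ is $1$‑truncated in the relevant range — and after using (4) to replace $\MX_{p-1}$ by $\MX_1$ $p$‑locally, the obstruction to extending a given orientation $u \colon \MX_1 \to E$ over $\MX_p$ is a single class in $E^{2p}(\Th(\rho \boxtimes \gamma_1))/I_{tr}$. The final step is to identify this class with the discrepancy $\Psi_u(t_u(\gamma_1)) - t_u(\rho \boxtimes \gamma_1)$: both measure, by construction, the failure of the Thom class determined by $u$ to be preserved by the $p$‑th power operation modulo transfers, so unwinding the definition of $\Psi_u$ from \cite[\S 7]{HL} identifies them, and the obstruction vanishes exactly when the Ando criterion holds.

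I expect the crux to be the precise construction and control of the filtration $\{Y_m\}$ of $\BU$: one must arrange it so that its colimit is genuinely $\BU$ rather than a localization or completion, so that the induced $\Einf$‑ring attachments are exactly those of (3), and so that the connectivity estimates feeding (4) are sharp. The analysis at $m = p^k$ in particular requires understanding the $\Sigma_{p^k}$‑equivariant homotopy type of the partition complex well enough to exhibit the extraspecial‑$p$‑group pattern — the very input later fed into the evenness arguments of this paper. By contrast, matching the stage‑$p$ obstruction with Ando's formula in (5) should be a formal unwinding once $F_p$ has been pinned down.
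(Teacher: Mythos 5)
This theorem is quoted verbatim from Hopkins--Lawson \cite[Theorems 1 \& 32]{HL}; the paper cites it and gives no proof, so there is no internal argument to compare against. Taken as a blind reconstruction of the Hopkins--Lawson construction, your sketch captures the correct overall architecture --- build $\MX_1$ as a relative free $\Einf$-ring corepresenting complex orientations (equivalently, the Thom spectrum of a suitable grouplike $\Einf$-space over $B\GL_1(\Ss)$), and obtain the rest of the tower from a filtration of $\BU$ together with the Thom-spectrum/Picard dictionary for $\Einf$-maps out of Thom spectra. Your pushout description of $\MX_1$ is correct and does match the Thom-spectrum description.

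The substantive inaccuracy is your description of $F_m$. You present $F_{p^k}$ as governed by the $\Sigma_{p^k}$-equivariant partition complex $|\Pi_{p^k}|$ of a finite set smashed with $(\CP^\infty)^{\wedge p^k}$. In Hopkins--Lawson the relevant object is the \emph{complex} building $L_m$: the nerve of the topologized poset of proper direct-sum decompositions of $\CC^m$, a $U(m)$-space, with $F_m \simeq ((L_m)^\diamond \wedge S^{2m})_{hU(m)}$, exactly as recalled in the paper immediately after this theorem. The passage from this $U(m)$-equivariant object to finite-group data is not part of the construction itself; it is a separate theorem of Arone--Lesh (quoted as Proposition 2.12), and after $p$-completion it lands on $\Sigma^k(S^{2p^k})_{h\Gamma_k}$ for the subgroup $\Gamma_k \subset U(p^k)$ of extraspecial type, not on a $\Sigma_{p^k}$-orbit space. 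Conflating $L_m$ with $|\Pi_m|$ would break the $U(m)$-equivariance that makes the homotopy orbit formula for $F_m$ meaningful, and it would obscure exactly which finite groups' Morava $K$-theories must be controlled --- the very input the rest of the paper depends on. Finally, your treatment of (5) as ``unwind the definition of $\Psi_u$'' elides the content of \cite[Theorem 32]{HL}: you have correctly located the single obstruction class in $E^{2p}(\Th(\rho\boxtimes\gamma_1))/I_{tr}$, but identifying it with $\Psi_u(t_u(\gamma_1)) - t_u(\rho\boxtimes\gamma_1)$ is the nontrivial computation, not a formality.
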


Using this theorem, we will reduce the proof of \Cref{thm:main-obst} to the following:

\begin{lem} \label{lem:even}
  Let $E$ denote a $p$-complete Landweber exact ring spectrum with homotopy concentrated in even degrees.
  Then
  $E^{2n} (F_p) \cong E^{2n+1} (F_{p^2}) \cong 0$
  for all $n \in \Z$.
%
\end{lem}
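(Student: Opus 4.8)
The plan is to reduce the computation of $E^*(F_p)$ and $E^*(F_{p^2})$ to the evenness of the Morava $K$-theory of the extraspecial $p$-groups $\Gamma_1^{(n)}$ and the groups $\Gamma_2^{(n)}$ of \Cref{dfn:gammakn}, using the Arone--Lesh analysis of the spaces $F_m$ together with the Landweber exactness of $E$.

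First I would isolate the structural input on $F_p$ and $F_{p^2}$. The $F_m$ are the filtration quotients of the free-$\Einf$-algebra filtration underlying the tower of \Cref{thm:HL}; by \Cref{thm:HL}(4) they are rationally trivial for $m>1$, and more precisely $F_m$ is $K(n)$-locally trivial whenever $m > p^n$ — so $F_p$ is rationally trivial while $F_{p^2}$ is both rationally and $K(1)$-locally trivial. Using \cite{ALFiltered}, I would then show that, after $p$-completion, $\Sigma^\infty F_{p^k}$ admits a convergent filtration whose layers are suspensions $\Sigma^{d}\Sigma^\infty_+ B\Gamma$ — possibly Thom-twisted by a complex vector bundle, which does not affect parity — with $\Gamma$ among the groups $\Gamma_j^{(m)}$ for $1 \le j \le k$, and with $d \equiv k \pmod 2$ on every layer. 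Thus only the extraspecial $p$-groups $\Gamma_1^{(m)}$ contribute to $F_p$, while $F_{p^2}$ additionally involves the $\Gamma_2^{(m)}$, and in both cases no $\Gamma_j$ with $j > 2$ appears.

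Next I would pass from Morava $K$-theory to $E$-cohomology. By \Cref{thm:morava-K-even}, for every prime $p$ and every $m$ the groups $\Gamma_1^{(m)}$ and $\Gamma_2^{(m)}$ have $K(n)$-cohomology concentrated in even degrees for all $n$ — in fact free over $K(n)^*$, so that these groups are good in the sense of \cite[\S 7]{HKR}. Since $E$ is $p$-complete and Landweber exact with $\pi_* E$ concentrated in even degrees, it then follows that $E^*(B\Gamma)$ is concentrated in even degrees: concretely one recovers $E^*(B\Gamma)$ from the $K(n)$-local theories (using the rational and $K(1)$-triviality of the relevant $F_{p^k}$ to limit which chromatic layers intervene) and observes that evenness and freeness over each $K(n)^*$ propagate. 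Feeding in the suspensions and Thom isomorphisms of the previous step, every layer of the filtration of $\Sigma^\infty F_{p^k}$ has $E$-cohomology concentrated in degrees congruent to $k$ modulo $2$; running up the filtration — the relevant $\lim^1$-terms vanishing for the same parity reasons — yields that $E^*(F_p)$ is concentrated in odd degrees and $E^*(F_{p^2})$ in even degrees, i.e.\ $E^{2n}(F_p) = E^{2n+1}(F_{p^2}) = 0$ for all $n$.

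The main obstacle is the first step: extracting from \cite{ALFiltered} the precise form of the filtration of $F_p$ and $F_{p^2}$ — verifying that no group $\Gamma_j^{(m)}$ with $j>2$ enters, and keeping careful track of the suspension degrees so that every layer sits in the single parity $k \bmod 2$. The passage from the $K(n)$- to the $E$-cohomology of the infinite complexes $B\Gamma$ is comparatively routine but still deserves care, since $E^*(B\Gamma)$ is not merely a base change of $\MU^*(B\Gamma)$; one genuinely uses the $p$-completeness of $E$ and the freeness of the relevant Morava $K$-theories to rule out contributions in the wrong parity.
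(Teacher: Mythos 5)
Your high-level strategy is the same as the paper's: reduce via Arone--Lesh to the Morava $K$-theory of the groups of \Cref{dfn:gammakn}, and then transport evenness to $E$ using Landweber exactness. However, there are several substantive inaccuracies in the execution, and one genuinely missing idea.

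First, the input from \cite{ALFiltered} is not a filtration of $\Sigma^\infty F_{p^k}$ with layers ranging over the finite groups $\Gamma_j^{(m)}$ for $1 \le j \le k$. What the paper quotes (\Cref{prop:summand}, from \cite[Propositions 9.6 \& 10.3]{ALFiltered}) is that, $p$-completely, $\Sigma^\infty F_{p^k}$ is a \emph{summand} of a single spectrum $\Sigma^k(S^{2p^k})_{h\Gamma_k}$, where $\Gamma_k$ is the \emph{compact Lie group} with an $S^1$ center — not one of the finite groups $\Gamma_k^{(m)}$. This is both simpler than what you describe and essential: identifying $(S^{2p^k})_{h\Gamma_k}$ as a Thom spectrum over $B\Gamma_k$ gives $E^*((S^{2p^k})_{h\Gamma_k}) \cong \widetilde{E}^{*-2p^k}(B\Gamma_k)$, and the extra suspension by $k$ supplies the parity shift. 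Your filtration-with-many-layers picture, and the accompanying $\lim^1$ discussion, do not correspond to anything in the source.

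Second, and more seriously, you assert that \Cref{thm:morava-K-even} gives even $K(n)$-cohomology for $\Gamma_1^{(m)}$ and $\Gamma_2^{(m)}$ \emph{for every $m$} and every prime. It does not: the cited results cover $\Gamma_1^{(1)}$ at all $p$, $\Gamma_2^{(1)}$ only at $p=2$, and $\Gamma_2^{(2)}$ only at odd $p$. Because the Arone--Lesh summand involves the compact Lie group $\Gamma_k$ rather than any one $\Gamma_k^{(m)}$, there is a real step needed to bridge the two: the paper uses the extension $1 \to \Gamma_k^{(n)} \to \Gamma_k \to S^1 \to 1$ and the Atiyah--Hirzebruch spectral sequence of the fibration $B\Gamma_k^{(n)} \to B\Gamma_k \to BS^1$ to conclude that evenness of $E^*(B\Gamma_k^{(n)})$ for \emph{some} $n$ implies evenness of $E^*(B\Gamma_k)$. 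This fibration argument is the key reduction, and it is absent from your sketch.

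Third, the passage from Morava $K$-theory to $E$-cohomology is not a matter of chromatic fracture plus "freeness propagating": the paper invokes a specific result going back to Ravenel--Wilson--Yagita, in the form of \cite[Lemma 8.25]{FSFG}, which applies to Landweber exact $E$ and to spaces with bounded above rational cohomology. The latter hypothesis fails for $B\Gamma_k$ (it has $BS^1$ sitting inside), which is another reason the paper must first establish evenness for the finite groups $B\Gamma_k^{(n)}$ and only then climb up to $B\Gamma_k$. Your sketch elides this point, and without it the transfer step is not justified.

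In short: the plan is aimed in the right direction, but the proof as written leans on a filtration statement that isn't in Arone--Lesh, an evenness statement for Morava $K$-theory that is stronger than what is known, and a transfer mechanism that would fail as stated for $B\Gamma_k$. The missing ingredients are the summand form of the Arone--Lesh result, the $BS^1$-fibration reduction from $\Gamma_k$ to $\Gamma_k^{(n)}$, and the precise RWY/Strickland lemma with its finiteness hypothesis.
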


\begin{rmk}
  In fact, we only need that $E^{2n+1} (F_{p^2}) \cong 0$.
  However, we include the statement $E^{2n} (F_p) \cong 0$ since it is no harder for us to prove.
  This extra evenness implies uniqueness up to homotopy for $\Einf$-refinements of complex orientations of height $\leq 1$.
  However, we will prove uniqueness in a different way in \Cref{sec:unique}.
\end{rmk}

\begin{qst} \label{qst:higher-hts}
  Given a $p$-complete Landweber exact ring spectrum $E$ with homotopy concentrated in even degrees, is $E^{2*+k-1} (F_{p^k}) \cong 0$ for $k \geq 3$?
\end{qst}

\begin{proof}[Proof of existence in \Cref{thm:main-obst} assuming \Cref{lem:even}]
  We begin by reducing to the case where $E$ is $p$-complete for some prime $p$.
  We make use of the fracture square:
  \begin{center}
    \begin{tikzcd}
      E \ar[r]\ar[d] & \prod_p E^{\wedge} _p \ar[d] \\
      E_{\QQ} \ar[r] & \left( \prod_p E^{\wedge} _p \right)_{\QQ}.
    \end{tikzcd}
  \end{center}
  Note that $E^{\wedge} _p$ is again even and Landweber exact, cf. \Cref{sec:app}.

  By \Cref{thm:HL}, we have $\Map_{\Einf} (\MU, R) \simeq \Or(R)$ for a rational $\Einf$-ring $R$; it follows further that $\pi_1 \Map_{\Einf} (\MU, R) \cong \pi_1 \Or(R) \cong 0$ if $R$ has homotopy concentrated in even degrees.
  As a consequence, there are pullback squares of sets:
  \begin{center}
  \vspace{-0.5cm}
    \begin{equation} \label{eq:pullback}
    \begin{tikzcd}
      \pi_0 \Map_{\Einf} (\MU, E) \ar[r] \ar[d] & \pi_0 \Map_{\Einf} (\MU, \prod_p E^{\wedge} _p) \ar[d] \\
      \pi_0 \Or (E_{\QQ}) \ar[r] & \pi_0 \Or ((\prod_p E^{\wedge} _p)_{\QQ})
    \end{tikzcd}
  \end{equation}
  \end{center}
  and
  \begin{center}
    \begin{tikzcd}
      \pi_0 \Or (E) \ar[r] \ar[d] & \pi_0 \Or (\prod_p E^{\wedge} _p) \ar[d] \\
      \pi_0 \Or (E_{\QQ}) \ar[r] & \pi_0 \Or ((\prod_p E^{\wedge} _p)_{\QQ}).
    \end{tikzcd}
  \end{center}
  To lift a complex orientation of $E$ to an $\Einf$-complex orientation, it therefore suffices to lift the induced complex orientation of $E^{\wedge} _p$.
  We may therefore assume that $E$ is $p$-complete.

  Let $E$ now denote an $p$-complete Landweber exact $\Einf$-ring with homotopy concentrated in even degrees. Using \Cref{thm:HL}, we see that it suffices to show that
  \[\pi_0 \Map_{\Einf} (\MX_{p^2}, E) \to \pi_0 \Map_{\Einf} (\MX_p, E)\]
  is surjective.


  By \Cref{thm:HL}, there is a fiber sequence
  \[ \Map_{\Einf} (\MX_{p^2}, E) \to \Map_{\Einf} (\MX_p, E) \to \Map_* (F_{p^2}, \Pic(E))\]
Now, there are equivalences
\[\Map_* (F_m, \Pic(E)) \simeq \Hom (\Sigma^{\infty} F_m, \pic(E)) \simeq \Hom (\Sigma^{\infty} F_m, \Sigma E),\]
where in the second equivalence we have used the fact that $\Sigma^\infty F_m$ is $(2m-1)$-connected by \cite[Corollary 4(5)]{HL}.
  It therefore follows from the above fiber sequence that it suffices to show that
  \[E^1 (\Sigma^{\infty} F_{p^2}) \cong 0.\]
  Since $E$ is $p$-complete, this follows from \Cref{lem:even}.
%
%
%
\end{proof}

%
%

\subsection{Proof of \Cref{lem:even}}
In the remainder of this section, we will prove \Cref{lem:even}.
First, we must recall the definition of the spaces $F_m$.

\begin{rec}
  Let $L_m$ denote the nerve of the topologized poset of proper direct-sum decompositions of $\CC^m$, and let $(L_m) ^{\diamond}$ denote its unreduced suspension. The natural action of $U(m)$ on $\CC^m$ endows $L_m$ and $(L_m)^{\diamond}$ with the structure of $U(m)$-spaces.

  Furthermore, view $S^{2m}$ as a $U(m)$-space via its identification with the one-point compactification of $\CC^m$, viewed as the fundamental representation of $U(m)$.
Then $F_m$ is given by
\[F_m \simeq ((L_m)^{\diamond} \wedge S^{2m})_{h U(m)}.\]
\end{rec}

To prove \Cref{lem:even}, we will use a result of Arone--Lesh to reduce the study of the $E$-cohomology of $F_m$ to the $E$-cohomology of certain groups $\Gamma_k$, whose definition we now recall.

\begin{dfn}[{\cite[Definition 1]{pStubborn}}] \label{dfn:gammakn}
  Let $\sigma_0, \dots, \sigma_{k-1} \in \Sigma_{p^k}$ denote the permutations
  \[\sigma_r (i) = \begin{cases} i + p^r &\text{ if } i \equiv 1, \dots, (p-1)p^r \mod p^{r+1} \\ i-(p-1) p^r &\text{ if } i \equiv (p-1)p^r +1, \dots, p^{r+1} \mod p^{r+1}. \end{cases}\]
    We let $\Gamma_k \subset U(p^k)$ denote the subgroup generated by the permutation matrices corresponding to $\sigma_0, \dots, \sigma_{k-1}$, the central $S^1$, and the diagonal matrices $A_0, \dots, A_{k-1}$ given by
  \[(A_r)_{ii} = \zeta_p ^{\lfloor (i-1)/p^r \rfloor},\]
  where $\zeta_p$ is a primitive $p$th root of unity.
  Then $\Gamma_k$ lies in a central extension
  \[1 \to S^1 \to \Gamma_k \to \F_p ^{2k} \to 1.\]
  For each $n \geq 1$, there is a normal subgroup $\Gamma_k ^{(n)} \subset \Gamma_k$ which only contains the central $p^{n}$th roots of unity instead of the full $S^1$.
  Then there are central extensions
  \[1 \to C_{p^n} \to \Gamma_k ^{(n)} \to \F_p^{2k} \to 1\]
  and exact sequences
  \[1 \to \Gamma_k ^{(n)} \to \Gamma_k \to S^1 \to 1.\]
\end{dfn}

\begin{rmk}
  The groups $\Gamma_k ^{(1)}$ are examples of \textit{extraspecial $p$-groups}, and in this language are commonly denoted by $p^{1+2k} _+$.
\end{rmk}

\begin{prop} [{\cite[Propositions 9.6 \& 10.3]{ALFiltered}}] \label{prop:summand}
  The $p$-completion of the spectrum
  \[\Sigma^\infty F_m \simeq \Sigma^{\infty} ((L_m)^{\diamond} \wedge S^{2m})_{hU(m)}\]
  is null unless $m = p^k$, in which case it is a summand of the $p$-completion of
  \[\Sigma^{k} (S^{2p^k})_{h\Gamma_k},\]
  where $\Gamma_k$ acts on $S^{2p^k}$ via the inclusion $\Gamma_k \subset U(p^k)$.
\end{prop}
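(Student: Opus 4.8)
The plan is to follow the analysis of the stable rank filtration of connective complex $K$-theory carried out by Arone--Lesh; indeed the spectra $\Sigma^\infty F_m$ are, up to a shift, the associated graded pieces of a natural rank filtration of $\ku$, and the proposition concerns the $U(m)$-equivariant homotopy type of $(L_m)^\diamond \wedge S^{2m}$. It is convenient to separate two claims: \textbf{(a)} $(\Sigma^\infty F_m)^\wedge_p \simeq 0$ unless $m = p^k$; and \textbf{(b)} when $m = p^k$, the $p$-completion of $\Sigma^\infty F_{p^k}$ is a retract of the $p$-completion of $\Sigma^k (S^{2p^k})_{h\Gamma_k}$. Throughout one exploits that $L_m$ is built, $U(m)$-equivariantly, out of the ``Young'' parabolic subgroups $U(a_1) \times \cdots \times U(a_r) \subset U(m)$ indexed by ordered decompositions $m = a_1 + \cdots + a_r$ with $r \geq 2$; in particular the direct sum operation supplies natural maps $\Sigma^\infty F_a \wedge \Sigma^\infty F_b \to \Sigma^\infty F_{a+b}$ which allow one to run transfer arguments across the filtration.

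For \textbf{(a)} I would induct on $m$. The geometry at level $m$ that is not already assembled from the strata below is supported on decompositions of $\mathbb{C}^m$ into $r \geq 2$ mutually isomorphic blocks of size $m/r$; the symmetric group $\Sigma_r$ acts by permuting blocks, and a Becker--Gottlieb / Kahn--Priddy transfer argument along $\Sigma_{r-1} \subset \Sigma_r$ shows that, after $p$-completion, only $r$ a power of $p$ can contribute, and then, iterating the blocking construction, only the case $r = p$ at each stage. This forces $m = p^k$. The bookkeeping is organized by the $\oplus$-maps above, which propagate the vanishing uniformly through the filtration.

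For \textbf{(b)}, fix a maximal torus $T \subset U(p^k)$ with normalizer $N(T) = T \rtimes \Sigma_{p^k}$, and let $T \rtimes W_k \subset N(T)$ be its $p$-toral subgroup, where $W_k = C_p \wr \cdots \wr C_p$ ($k$ factors) is the Sylow $p$-subgroup of $\Sigma_{p^k}$. Since $\chi(U(p^k)/N(T)) = 1$ (the rational cohomology of the flag manifold $U(p^k)/T$ is the regular representation of $\Sigma_{p^k}$, so its Weyl-invariants are one-dimensional), the Becker--Gottlieb transfer exhibits $(\Sigma^\infty F_{p^k})^\wedge_p$ as a retract of the $N(T)$-homotopy orbits of $(\Sigma^\infty (L_{p^k})^\diamond \wedge S^{2p^k})|_{N(T)}$, and a further transfer along $W_k \subset \Sigma_{p^k}$ (index prime to $p$) reduces $p$-locally to the $T \rtimes W_k$-homotopy orbits. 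On the resulting $T \rtimes W_k$-spectrum one applies a complex/$\ku$-flavoured Steinberg idempotent --- an idempotent in the $\Z_{(p)}$-linearized endomorphism ring that splits off the ``top'' summand of the order complex of the decomposition poset, whose single unreduced suspension accounts for the coordinate $\Sigma^k$ and reflects the $k$ levels of $W_k$. One then identifies this Steinberg summand, $T \rtimes W_k$-equivariantly, as induced up from the stabilizer of a chamber in the associated building --- which is precisely the subgroup $\Gamma_k \subset T \rtimes W_k \subset U(p^k)$ of \Cref{dfn:gammakn}, generated by the permutation matrices $\sigma_r$, the diagonal matrices $A_r$, and the central circle. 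Passing to homotopy orbits and using the adjunction $((T\rtimes W_k)_+ \wedge_{\Gamma_k} Y)_{h(T \rtimes W_k)} \simeq Y_{h\Gamma_k}$, together with $(G_+ \wedge_K Y) \wedge Z \cong G_+ \wedge_K (Y \wedge Z|_K)$ for the $U(p^k)$-sphere $S^{2p^k}$, yields the desired retract onto $\Sigma^k(S^{2p^k})_{h\Gamma_k}$. (The distinction between the $S^1$-central extension $\Gamma_k$ and the finite central extensions $\Gamma_k^{(n)}$ is immaterial here, since only $p$-completed homotopy orbits enter.)

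The main obstacle is step \textbf{(b)}: correctly isolating the Steinberg-type summand of the complex decomposition complex of $\mathbb{C}^{p^k}$ and identifying the chamber stabilizer with the explicit group $\Gamma_k$. This requires a precise grip on the $N(T)$-equivariant homotopy type of $(L_{p^k})^\diamond$ --- a unitary analogue of the computation of the homology of the partition lattice as a Steinberg/Lie representation --- together with the verification that the relevant idempotent is defined over $\Z_{(p)}$ and that its complementary summand is $p$-adically contractible; this is exactly where the acyclicity and evenness inputs of the Arone--Lesh machinery are consumed. By contrast, step \textbf{(a)} is a comparatively formal transfer argument once the $\oplus$-multiplicative structure is in place, and the point-set subtleties of the topologized poset $L_m$ are routine.
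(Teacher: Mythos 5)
First, note that the paper does not prove this statement at all: it is quoted verbatim from Arone--Lesh \cite{ALFiltered} (Propositions 9.6 and 10.3), so what you are really proposing is a re-derivation of their result, and it has to be measured against their argument. Your formal reductions are fine (the Becker--Gottlieb transfer for $\chi(\mathrm{U}(p^k)/N(T))=1$, the prime-to-$p$ transfer down to $T\rtimes W_k$, the projection formula and $((T\rtimes W_k)_+\wedge_{\Gamma_k}Y)_{h(T\rtimes W_k)}\simeq Y_{h\Gamma_k}$), but the two places where the actual content lives are not correct as stated. For part (a), the vanishing for $m\neq p^k$ does not follow from a Kahn--Priddy-style transfer along $\Sigma_{r-1}\subset\Sigma_r$ over ``equal-block strata'': $L_m$ is not stratified by equal-block decompositions in the way you use, and the Arone--Lesh proof instead runs through isotropy separation for the $\mathrm{U}(m)$-action, showing that after $p$-completion only $p$-toral isotropy with non-contractible fixed points on $L_m$ can contribute, and that no such subgroups exist unless $m=p^k$. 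That fixed-point analysis is the missing idea, not a bookkeeping refinement of your sketch.

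For part (b) the pivotal identification is wrong: $\Gamma_k$ is \emph{not} the stabilizer of a chamber of the decomposition complex. The stabilizer in $\mathrm{U}(p^k)$ (or in $T\rtimes W_k$) of any simplex of $L_{p^k}$ --- e.g.\ the standard flag of coordinate block decompositions --- contains the full maximal torus $T$, whereas $\Gamma_k$ meets $T$ only in the subgroup generated by the central $S^1$ and the $A_r$'s of \Cref{dfn:gammakn}; so no chamber-stabilizer induction can produce $\Gamma_k$. The role $\Gamma_k$ actually plays in \cite{ALFiltered} is dual to what you describe: it is the minimal irreducible $p$-toral subgroup whose \emph{fixed points} on $L_{p^k}$ are non-contractible, and $(L_{p^k})^{\Gamma_k}$ is the Tits building of $\mathrm{Sp}_{2k}(\mathbb{F}_p)$ (the Weyl group of $\Gamma_k$), a wedge of $(k-1)$-spheres by Solomon--Tits; its unreduced suspension is the true source of the shift $\Sigma^k$, not ``the $k$ levels of $W_k$.'' The retraction onto a summand of $\Sigma^k(S^{2p^k})_{h\Gamma_k}$ then comes from the contribution of the $\Gamma_k$-isotropy stratum together with the projectivity of the Steinberg module of $\mathrm{Sp}_{2k}(\mathbb{F}_p)$, which lets one pass from homotopy orbits over the normalizer of $\Gamma_k$ to a summand of the orbits over $\Gamma_k$ itself despite the index being divisible by $p$. (Also, ``evenness'' is not an input to this proposition at all --- it is only used later in the paper, after the proposition is granted.) As written, your step (b) would fail at the point where the Steinberg summand is identified with an induced spectrum from the chamber stabilizer.
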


Now, $(S^{2p^k})_{h\Gamma_k}$ may also be described as the Thom spectrum associated to the composition
\[B\Gamma_k \to \BU (p^k) \to \Z \times \BU,\]
from which it follows that
\[E^* ((S^{2p^k})_{h\Gamma_k}) \cong \widetilde{E}^{*-2p^k} (B\Gamma_k).\]

\Cref{lem:even} therefore reduces to the following lemma:

\begin{lem}\label{lem:gamma-k-even}
  Let $E$ denote a $p$-local Landweber exact ring spectrum whose homotopy is concentrated in even degrees.
  Then, for $k \leq 2$, $E^* (B\Gamma_k)$ is concentrated in even degrees.
\end{lem}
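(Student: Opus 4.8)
The plan is to propagate evenness down the central extension $1 \to S^1 \to \Gamma_k \to \F_p^{2k} \to 1$, starting from the finite quotient groups $\Gamma_k^{(1)} = p^{1+2k}_+$, where the relevant evenness of Morava $K$-theory can be found in the literature.

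First I would exploit the fibration sequence
\[ B\Gamma_k^{(1)} \longrightarrow B\Gamma_k \longrightarrow BS^1 \]
attached to the exact sequence $1 \to \Gamma_k^{(1)} \to \Gamma_k \to S^1 \to 1$ of \Cref{dfn:gammakn}. Since $BS^1 \simeq \CP^\infty$, I can filter $B\Gamma_k$ by the pullbacks $Y_N := B\Gamma_k \times_{\CP^\infty} \CP^N$, so that $B\Gamma_k = \colim_N Y_N$, $Y_0 \simeq B\Gamma_k^{(1)}$, and, since the fibration restricts trivially over the top cell $D^{2N+2} \subset \CP^{N+1}$,
\[ Y_{N+1}/Y_N \simeq \Sigma^{2N+2} (B\Gamma_k^{(1)})_+ . \]
Granting that $E^*(B\Gamma_k^{(1)})$ is concentrated in even degrees, the same is then true of $\wt E^*(Y_{N+1}/Y_N) \cong E^{*-2N-2}(B\Gamma_k^{(1)})$, and an induction on $N$ through the cofiber sequences $Y_N \to Y_{N+1} \to Y_{N+1}/Y_N$ shows that each $E^*(Y_N)$ is concentrated in even degrees and that the transition maps $E^*(Y_{N+1}) \to E^*(Y_N)$ are surjective. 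Hence the $\lim^1$-term in the Milnor sequence vanishes and $E^*(B\Gamma_k) \cong \lim_N E^*(Y_N)$ is concentrated in even degrees, which reduces the lemma to the corresponding statement for the finite group $\Gamma_k^{(1)}$.

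For $B\Gamma_k^{(1)}$ I would combine two inputs. The first is that $K(n)^*(B\Gamma_k^{(1)})$ is concentrated in even degrees for all $n \geq 1$: for $k \leq 2$ the group $\Gamma_k^{(1)}$ is an extraspecial $p$-group of order $p^3$ or $p^5$, and this is \Cref{thm:morava-K-even}, which collects the relevant computations from the literature (these groups being \emph{good} in the sense of Hopkins--Kuhn--Ravenel). This is the only step at which the hypothesis $k \leq 2$ enters, cf. the Question in the introduction. The second is the general principle that for a $p$-local Landweber exact ring spectrum $E$ with homotopy concentrated in even degrees and a finite group $G$ with $K(n)^*(BG)$ concentrated in even degrees for all $n \geq 1$, the ring $E^*(BG)$ is concentrated in even degrees; this follows from good-group theory together with the Landweber exactness facts recorded in \Cref{sec:app}, the essential point being that $\pi_* E$ is torsion free, so that $E$ is assembled from its rationalization and its $K(n)$-localizations. (One cannot argue directly with $H^*(BG;\F_p)$, which is typically not concentrated in even degrees: already $H^1(B\Gamma_1^{(1)};\F_p) \neq 0$.)

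The step I expect to be the main obstacle is the second input above: pinning down the precise statement relating evenness of the Morava $K$-theories of a finite group to evenness of its Landweber exact cohomology, and checking the hypotheses under which this -- and the $\lim^1$-type arguments of the second paragraph -- genuinely apply; this is essentially the content of the appendix on Landweber exact spectra. By contrast, the evenness of $K(n)^*(B\Gamma_k^{(1)})$, which is the conceptual heart of the matter, is quoted from the literature rather than established here.
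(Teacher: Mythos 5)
Your overall strategy — propagate evenness along the fibration $B\Gamma_k^{(n)} \to B\Gamma_k \to BS^1$, starting from a finite subgroup of $\Gamma_k$, and use a Landweber-exactness argument (\cite[Lemma 8.25]{FSFG}) to pass from evenness of Morava $K$-theories to evenness of $E^*$ — is the same as the paper's, and your skeletal-filtration argument over $\CP^\infty$ is just a hands-on version of the Atiyah--Hirzebruch spectral sequence the paper uses. But there is a concrete gap in the input you invoke: you claim $K(n)^*(B\Gamma_k^{(1)})$ is even for all $k \leq 2$ at all primes, attributing this to \Cref{thm:morava-K-even}. That theorem does \emph{not} assert evenness of $K(n)^*(B\Gamma_2^{(1)})$ at odd primes; it only records $\Gamma_1^{(1)}$ (all $p$), $\Gamma_2^{(1)}$ (only $p=2$, Schuster--Yagita), and $\Gamma_2^{(2)}$ (odd $p$, Yagita). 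For the extraspecial group $p^{1+4}_+$ at odd primes the Morava $K$-theory evenness is not in the cited literature, which is precisely why Yagita's computation is stated for the larger group $\Gamma_2^{(2)}$, a central extension of $\F_p^4$ by $C_{p^2}$ rather than $C_p$.

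The fix is exactly the flexibility your own setup already has but which you don't exploit: the exact sequence $1 \to \Gamma_k^{(n)} \to \Gamma_k \to S^1 \to 1$ holds for \emph{every} $n \geq 1$, so the fibration $B\Gamma_k^{(n)} \to B\Gamma_k \to BS^1$ lets you feed in whichever finite subgroup has a known Morava $K$-theory calculation. The paper runs your argument with $\Gamma_1^{(1)}$ at all primes, $\Gamma_2^{(1)}$ at $p=2$, and $\Gamma_2^{(2)}$ at odd primes. You should likewise replace ``$\Gamma_k^{(1)}$'' in your reduction step by ``some $\Gamma_k^{(n_k)}$ for which \Cref{thm:morava-K-even} applies.'' With that change your proof matches the paper's.

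One smaller imprecision: your ``second input'' — passing from evenness of $K(n)^*(BG)$ for all $n$ to evenness of $E^*(BG)$ — is not really derived from the appendix (\Cref{sec:app}), which is about $K(n)$-localizations of Landweber exact rings and is used elsewhere. The paper quotes \cite[Lemma 8.25]{FSFG} directly (a result of Strickland building on Ravenel--Wilson--Yagita), with footnotes explaining why it applies to $B\Gamma_k^{(n)}$ rather than $B\Gamma_k$ itself (bounded rational cohomology) and to not-necessarily-periodic $E$.
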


We will deduce \Cref{lem:gamma-k-even} from the following Morava $K$-theory computations:

\begin{thm} \label{thm:morava-K-even}
  For all $n \geq 0$, the following groups are concentrated in even degrees:
  \begin{enumerate}
    \item (Tezuka--Yagita, \cite[Theorem 4.2]{TY}) $K(n)^* (B\Gamma_1 ^{(1)})$ at all primes $p$.
    \item (Schuster--Yagita, \cite[Theorem 5.4]{SY}) $K(n)^* (B\Gamma_2 ^{(1)})$ at the prime $2$.
    \item (Yagita, \cite[Theorem 1.2]{Y}) $K(n)^* (B\Gamma_2 ^{(2)})$ at odd primes $p$.
  \end{enumerate}
\end{thm}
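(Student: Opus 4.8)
The plan is to recognize that \Cref{thm:morava-K-even} is a compilation of three independent computations already in the literature, so the proof is essentially a matter of matching each group of \Cref{dfn:gammakn} with its source: part (1) is \cite[Theorem 4.2]{TY} (the extraspecial group $\Gamma_1^{(1)} = p^{1+2}_+$, at all primes), part (2) is \cite[Theorem 5.4]{SY} (the $+$-type extraspecial group of order $2^5$, at $p = 2$), and part (3) is \cite[Theorem 1.2]{Y} (the order-$p^6$ central extension $1 \to C_{p^2} \to \Gamma_2^{(2)} \to \F_p^4 \to 1$, at odd $p$). I would first dispatch the case $n = 0$: here $K(0)^*(B\Gamma_k^{(j)})$ is the rational cohomology of $B\Gamma_k^{(j)}$, which is $\QQ$ concentrated in degree $0$ since each $\Gamma_k^{(j)}$ is a finite $p$-group, so all the content lies in the range $n \geq 1$. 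The one point that needs care is to check that the groups appearing in those references --- typically specified by generators and relations, or by their extension class --- coincide with the $\Gamma_k^{(j)}$ of \Cref{dfn:gammakn}; the asymmetry in using $\Gamma_2^{(1)}$ at $p = 2$ but $\Gamma_2^{(2)}$ at odd $p$ simply reflects which computations are on record, and, as the proof of \Cref{lem:gamma-k-even} shows, either suffices as an input there.

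Were one to want a uniform argument rather than to quote these, the common technique is a spectral sequence computation. By \Cref{dfn:gammakn} each group sits in a central extension $1 \to C_{p^j} \to \Gamma_k^{(j)} \to \F_p^{2k} \to 1$, giving a fibration $BC_{p^j} \to B\Gamma_k^{(j)} \to (BC_p)^{2k}$ and hence an Atiyah--Hirzebruch--Serre spectral sequence $H^*((BC_p)^{2k}; K(n)^*(BC_{p^j})) \Rightarrow K(n)^*(B\Gamma_k^{(j)})$. The fiber term is the classical Ravenel--Wilson computation $K(n)^*(BC_{p^m}) \cong K(n)^*[\![x]\!]/([p^m]_F(x))$, a free $K(n)^*$-module of rank $p^{mn}$ concentrated in even degrees, where $[p^m]_F$ is the $p^m$-series of the height-$n$ Honda formal group $F$; the differentials are governed by the extension class together with this $p$-series. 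The external constraint pinning things down is the Hopkins--Kuhn--Ravenel character theory \cite[\S 7]{HKR}: for a \emph{good} finite group $G$ --- one with $K(n)^*(BG)$ concentrated in even degrees --- the rank $\dim_{K(n)^*} K(n)^*(BG)$ is the number of conjugacy classes of commuting $n$-tuples in $G$, and one wants to check that for the extensions of \Cref{dfn:gammakn} the spectral sequence above indeed degenerates onto an even $E_\infty$-page of exactly that rank.

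The genuinely hard part --- and the reason \cite{SY} and \cite{Y} are substantial papers, while the analogous statement for $k \geq 3$ remains open (as reflected in the open question posed in the introduction) --- is carrying this out: one must identify all the differentials and the multiplicative extensions on the $E_\infty$-page, which for $k = 2$ requires the detailed $\BP^*$- and $K(n)^*$-cohomology calculations of those works, and the methods are sensitive enough to the prime that the literature treats $p = 2$ (via $\Gamma_2^{(1)}$) and odd $p$ (via $\Gamma_2^{(2)}$) by somewhat different routes. For the application to \Cref{lem:gamma-k-even} only the cases $k \leq 2$ are needed, so the three cited computations are precisely what is required.
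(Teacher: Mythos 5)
Your proposal is correct and matches the paper's approach exactly: the theorem has no proof in the paper beyond the inline citations to Tezuka--Yagita, Schuster--Yagita, and Yagita, and your identification of the groups (extraspecial $p^{1+2}_+$ for part (1), the order-$2^5$ extraspecial $2$-group for part (2), the order-$p^6$ central extension $1 \to C_{p^2} \to \Gamma_2^{(2)} \to \F_p^4 \to 1$ for part (3)) is precisely the matching that is implicitly required. The additional sketch of the Atiyah--Hirzebruch--Serre spectral sequence and the Hopkins--Kuhn--Ravenel rank constraint is accurate background but goes beyond what the paper provides or needs.
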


A lemma of Strickland which builds on work of Ravenel--Wilson--Yagita \cite{RWY} allows us to transport this evenness from Morava $K$-theory to $E$-cohomology:

\begin{cor}\label{cor:E-even}
  Given a $p$-local Landweber exact ring spectrum $E$ whose homotopy is concentrated in even degrees, the following groups are concentrated in even degrees:
  \begin{enumerate}
    \item $E^* (B\Gamma_1 ^{(1)})$ at all primes $p$.
    \item $E^* (B\Gamma_2 ^{(1)})$ at the prime $2$.
    \item $E^* (B\Gamma_2 ^{(2)})$ at odd primes $p$.
  \end{enumerate}
\end{cor}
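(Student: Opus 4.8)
The plan is to deduce \Cref{cor:E-even} from \Cref{thm:morava-K-even} by invoking the transport result of Ravenel--Wilson--Yagita \cite{RWY}, in the form of the lemma of Strickland referred to above. That lemma takes as input a space $X$ of finite type such that $K(n)^\ast(X)$ is concentrated in even degrees for all finite $n \geq 0$, and concludes that $\BP^\ast(X)$ is concentrated in even degrees and Landweber flat; consequently, for any $p$-local Landweber exact ring spectrum $E$ whose homotopy is concentrated in even degrees, $E^\ast(X)$ is obtained from $\BP^\ast(X)$ by extension of scalars along $\BP^\ast \to E^\ast$ (completed where necessary), and hence is itself concentrated in even degrees. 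So the only thing left to do is to verify the hypotheses for the relevant classifying spaces.

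Concretely, I would let $G$ range over the finite groups $\Gamma_1^{(1)}$ (at an arbitrary prime $p$), $\Gamma_2^{(1)}$ (at $p = 2$), and $\Gamma_2^{(2)}$ (at an odd prime $p$), and take $X = BG$. As $G$ is finite, $BG$ has degreewise finitely generated homology and so is of finite type; its rational cohomology $H^\ast(BG;\QQ) \cong \QQ$ is concentrated in degree $0$; and $K(n)^\ast(BG)$ is concentrated in even degrees for all finite $n \geq 1$ by \Cref{thm:morava-K-even}. Hence the hypotheses of the transport lemma hold, and it follows that $E^\ast(BG)$ is concentrated in even degrees for every $p$-local Landweber exact ring spectrum $E$ with even homotopy -- with no restriction on the height of $E$ -- which is precisely the assertion of \Cref{cor:E-even}.

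The proof is therefore essentially a citation, and the one point requiring genuine care is the precise form of the input to \cite{RWY}. In particular, one must \emph{not} demand that the mod $p$ cohomology $H^\ast(BG;\F_p)$ be concentrated in even degrees -- and indeed it is not, since these extraspecial groups have nonzero mod $p$ cohomology in odd degrees -- so a naive Atiyah--Hirzebruch spectral sequence argument does not work. The real content of \cite{RWY} is that evenness of the \emph{finite}-height Morava $K$-theories, together with rational cohomology, already forces $\BP^\ast(BG)$, and hence $E^\ast(BG)$, to be concentrated in even degrees; the mechanism is an inductive analysis along the standard towers of truncated Brown--Peterson spectra, using the fiber sequences that relate successive truncations of $\BP$ via the regular elements $v_m$ and whose subquotients involve the $K(m)$. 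A secondary, milder point is the $\lim^1$ and completion bookkeeping needed to pass from the skeleta of the infinite complex $BG$ to $BG$ itself; this is subsumed in \cite{RWY}, and one can also sidestep it by first establishing evenness of the $E$-homology $E_\ast(BG)$ -- a filtered colimit, hence unproblematic -- and then dualizing.
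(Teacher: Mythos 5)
Your proposal is correct and takes essentially the same route as the paper: the paper's entire proof is to combine \Cref{thm:morava-K-even} with the Ravenel--Wilson--Yagita transport result in the form of a lemma of Strickland (cited as \cite[Lemma 8.25]{FSFG}), exactly as you do, with the key hypotheses being that $B\Gamma_k^{(n)}$ has trivial rational cohomology (so the lemma applies there, though not to $B\Gamma_k$ itself) and finite-type skeleta. Your remarks about why the mod $p$ Atiyah--Hirzebruch argument cannot work and about handling the $\lim^1$/completion issues match the intent of the paper's footnotes, which additionally observe that the even-periodicity hypothesis in \cite{FSFG} is either unused or can be sidestepped by passing to $E[x_2^{\pm 1}]$.
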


\begin{proof}
  Combine \Cref{thm:morava-K-even} with \cite[Lemma 8.25]{FSFG}.\footnote{Note that we cannot apply \cite[Lemma 8.25]{FSFG} directly to $B\Gamma_k$, since it only applies to spaces with bounded above $\QQ$-cohomology.}\footnote{While it is assumed in \cite{FSFG} that $E$ is even-periodic, this is not used in the proof of \cite[Lemma 8.25]{FSFG}. We also note that $E$ may be viewed as a summand of an even-periodic ring spectrum $E[x_2 ^{\pm 1}]$, so that we may conclude from \cite[Lemma 8.25]{FSFG} as stated.}
\end{proof}

\begin{proof}[Proof of \Cref{lem:gamma-k-even}]
  The short exact sequence
  \[\Gamma_k ^{(n)} \to \Gamma_k \to S^1\]
  induces a fiber sequence
  \[B\Gamma_k ^{(n)} \to B\Gamma_k \to BS^1.\]
  Using the associated Atiyah--Hirzebruch spectral sequence
  \[\H^* (BS^1; E^* (B\Gamma_k ^{(n)})) \Rightarrow E^* (B\Gamma_k),\]
  we find that if $E^* (B\Gamma_k ^{(n)})$ is even for some $n$, then $E^* (B\Gamma_k)$ must be as well.
  Therefore \Cref{lem:gamma-k-even} follows from \Cref{cor:E-even}.
\end{proof}

\begin{rmk} \label{rmk:is-even}
  By the same arguments, to give a positive answer to \Cref{qst:higher-hts} it suffices to show that $K(n)^* (B\Gamma_k ^{(i_k)})$ is concentrated in even degrees for a fixed $i_k$ not depending on $n$.
  This is closely related to the question of whether $\Gamma_k ^{(i_k)}$ is a \textit{good group} in the sense of Hopkins--Kuhn--Ravenel \cite[\S 7]{HKR}.
\end{rmk}
%

\section{Uniqueness of $\Einf$-Orientations} \label{sec:unique}
Our goal in this section is to prove the following result:

\begin{thm} \label{thm:unique}
  Let $E$ denote an $L_2$-local complex orientable $\Einf$-ring with the property that $K(1)_* E$ and $K(1)_* L_{K(2)} E$ are concentrated in even degrees at all primes $p$.
  Then each complex orientation of $E$ admits at most one refinement to an $\Einf$-complex orientation up to homotopy.
\end{thm}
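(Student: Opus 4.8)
The plan is to run the Hopkins--Lawson obstruction theory of \Cref{thm:HL} ``one step at a time'' and show that at each stage the relevant space of extensions is either empty or connected with trivial fundamental group, so that the set $\pi_0 \Map_{\Einf}(\MU, E)$ injects into $\pi_0 \Or(E)$. Since $\varinjlim \MX_n \simeq \MU$, we have $\Map_{\Einf}(\MU, E) \simeq \varprojlim_n \Map_{\Einf}(\MX_n, E)$, so it suffices to control the homotopy of each $\Map_{\Einf}(\MX_n, E)$ and the maps between consecutive terms. By part (2) of \Cref{thm:HL}, $\Map_{\Einf}(\MX_1, E) \simeq \Or(E)$; thus uniqueness amounts to showing that the map $\Map_{\Einf}(\MX_n, E) \to \Map_{\Einf}(\MX_1, E)$ induces an injection on $\pi_0$ in the limit. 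First I would reduce to a prime-by-prime statement: by the fracture square and part (4) of \Cref{thm:HL} (rational equivalences above level $1$), the rationalization contributes nothing beyond $\Or(E_{\QQ})$, whose $\pi_1$ vanishes since $E_\QQ$ is complex orientable with even homotopy; so the obstruction to uniqueness lives in the product of the $K(1)$- and $K(2)$-local pieces.

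Next, using part (3) of \Cref{thm:HL}, each transition $\Map_{\Einf}(\MX_m, E) \to \Map_{\Einf}(\MX_{m-1}, E)$ sits in a fiber sequence with fiber (over the chosen basepoint) an iterated loop space of $\Map_*(F_m, \Pic(E))$. As in the existence proof, $\Map_*(F_m, \Pic(E)) \simeq \Hom(\Sigma^\infty F_m, \Sigma E)$, and by \Cref{prop:summand} this vanishes $p$-locally unless $m = p^k$, in which case $\Sigma^\infty F_{p^k}$ is a $p$-complete summand of $\Sigma^{k}(S^{2p^k})_{h\Gamma_k} \simeq \Sigma^k \Th(B\Gamma_k \to \Z \times \BU)$, so the relevant groups are shifts of $\widetilde{E}^*(B\Gamma_k)$ with the appropriate $p$-completion. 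For uniqueness we need one degree lower than for existence: the obstruction to injectivity on $\pi_0$ of the $m$-th transition map is $\pi_1$ of the fiber, i.e. $E^0(\Sigma^\infty F_{p^k})$, a shift of $\widetilde{E}^{-2p^k+k}(B\Gamma_k)$ after $p$-completion — so I need the $E$-cohomology of $B\Gamma_k$ in the parity opposite to what \Cref{lem:even} controls, \emph{and} I need this not just for $E$ but for the $K(1)$- and $K(2)$-localizations that appear after fracturing. This is where the hypotheses of the theorem enter: since $E$ is $L_2$-local with $K(1)_* E$ and $K(1)_* L_{K(2)} E$ even, one checks (via the chromatic fracture square for $E$ and for $L_{K(2)} E$, and the fact that $L_{K(2)} E$ is Landweber exact of height exactly $2$ with even homotopy) that it suffices to know the parity behavior of $K(1)^*$ and $K(2)^*$ of $B\Gamma_1$ — equivalently, by \Cref{cor:E-even} applied to each Morava $K$-theory, that $K(n)^*(B\Gamma_1^{(1)})$ is even for $n \le 2$, which is part (1) of \Cref{thm:morava-K-even} (Tezuka--Yagita). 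The key point is that only $\Gamma_1$ enters: for $m = p$ the transition is from $\MX_1$ to $\MX_p$, and for uniqueness of the \emph{underlying} complex orientation we only need to go up to $\MX_p$ once the Ando criterion has pinned down the lift to $\MX_p$ — more precisely, I would argue that since any two $\Einf$-lifts of a fixed complex orientation agree on $\MX_1$, they agree on $\MX_p$ provided $\pi_1 \Map_{\Einf}(\MX_p, E) \to \pi_1\Or(E)$ is controlled, and then that the higher $\MX_{p^k}$ for $k \ge 2$ contribute nothing because $L_{K(2)}$-locally those obstruction groups are shifts of $\widetilde{E}^*(B\Gamma_2)$, which is handled by parts (2) and (3) of \Cref{thm:morava-K-even} together with the evenness of $K(n)_* E$ for $n \le 2$.

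More carefully, the clean strategy is: (i) pass to the chromatic fracture square $E \to L_{K(1)} E \times_{L_{K(1)} L_{K(2)} E} L_{K(2)} E$ (valid since $E$ is $L_2$-local), reducing uniqueness for $E$ to uniqueness for $L_{K(1)} E$ and $L_{K(2)} E$ plus injectivity on $\pi_1$ of the map to $L_{K(1)} L_{K(2)} E$; (ii) for the $K(1)$-local factor, use that by part (4) of \Cref{thm:HL} $\MX_p \to \MU$ is a $K(1)$-local equivalence, so $\Map_{\Einf}(\MU, L_{K(1)} E) \simeq \Map_{\Einf}(\MX_p, L_{K(1)} E)$, and uniqueness on $\pi_0$ follows once $\pi_1 \Map_{\Einf}(\MX_p, L_{K(1)} E) \to \pi_1 \Or(L_{K(1)} E)$ has the requisite injectivity, which in turn follows from the vanishing of $(L_{K(1)} E)^0(\Sigma^\infty F_p)$, a shift of $K(1)$-locally-completed $\widetilde{E}^*(B\Gamma_1)$, handled by part (1) of \Cref{thm:morava-K-even}; (iii) for the $K(2)$-local factor, $\MX_{p^2} \to \MU$ is a $K(2)$-local equivalence, and after $p$-completion the fibers of the transitions up to level $p^2$ involve shifts of $\widetilde{E}^*(B\Gamma_1)$ and $\widetilde{E}^*(B\Gamma_2)$, both even by \Cref{cor:E-even} using all three parts of \Cref{thm:morava-K-even} together with the evenness hypotheses on $K(1)_* E$, $K(1)_* L_{K(2)} E$ and the consequent evenness of $K(2)_* L_{K(2)} E$ (since $L_{K(2)} E$ is Landweber exact of height $2$). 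The \textbf{main obstacle} I anticipate is bookkeeping the parity: \Cref{lem:even} and \Cref{cor:E-even} as stated give evenness of $\widetilde{E}^*(B\Gamma_k)$, but the uniqueness obstructions sit one degree off from the existence obstructions, so I must be careful that the relevant groups are $\widetilde{E}^{\text{even}}(B\Gamma_k)$ evaluated against an \emph{even}-dimensional Thom shift ($2p^k$ minus the suspension degree $k$ — note $k$ may be odd!), and reconcile this with \Cref{cor:E-even}; resolving this is precisely why the Morava $K$-theory inputs must be known to be concentrated in even degrees rather than merely nontrivial in one parity, and why the ``extra evenness'' remark after \Cref{lem:even} is relevant even though the present theorem is proved by this more hands-on fracture-square route.
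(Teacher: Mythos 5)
Your proposal has a genuine gap that makes the Hopkins--Lawson route fail for uniqueness at height~$2$, and you in fact brush up against it yourself in your ``main obstacle'' paragraph without resolving it. The problem is pure parity bookkeeping. After $p$-completion, \Cref{prop:summand} identifies $\Sigma^\infty F_{p^k}$ with a summand of $\Sigma^k (S^{2p^k})_{h\Gamma_k}$, so $E^j(F_{p^k})$ is a shift of $\widetilde{E}^{j-k-2p^k}(B\Gamma_k)$. The \emph{existence} obstruction at level $p^2$ is $E^1(F_{p^2})$, which sits in $\widetilde{E}^{-1-2p^2}(B\Gamma_2)$ --- an odd degree, so evenness of $E^*(B\Gamma_2)$ kills it. The \emph{uniqueness} obstruction at level $p^2$ is one degree lower, namely $E^0(F_{p^2})$, which sits in $\widetilde{E}^{-2-2p^2}(B\Gamma_2)$ --- an \emph{even} degree, precisely where $E^*(B\Gamma_2)$ is concentrated. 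So at the crucial height-$2$ step the parity works out \emph{against} you: evenness of the Morava $K$-theory of $B\Gamma_2$ gives you nonvanishing rather than vanishing, and there is no formal reason for this group to be zero. (Note that at level $p$ the shift is $-1-2p$, which is odd, so $E^0(F_p)=0$ does follow; this is exactly the ``extra evenness'' the remark after \Cref{lem:even} alludes to, and it is why that route does prove uniqueness at height $\leq 1$ --- but not at height $2$.)

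This is why the paper abandons the Hopkins--Lawson obstruction theory entirely for the uniqueness half and instead argues via orientation theory in the sense of \cite{ABGHR}: it suffices to show $[\bu, \gl_1(E)] \to [\Sigma^\infty \CP^\infty, \gl_1(E)]$ is injective. After reducing to the $p$-complete case via the arithmetic square and replacing $\gl_1(E)$ by $\gl_1(E)^{\wedge}_p$ in the relevant range, one splits off the fiber $F$ of $\gl_1(E) \to L_2\gl_1(E)$; the contribution of $F^{\wedge}_p$ is handled by coconnectivity (\cite[Theorem~4.11]{AHR}) plus connectivity of the cofiber of $\Sigma^\infty\CP^\infty \to \bu$. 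The chromatic piece $L_{K(1)\oplus K(2)}\gl_1(E)$ is then analyzed using the Bousfield--Kuhn functor and the chromatic fracture square, reducing to an exact sequence built from $[\KU_p, L_{K(1)} E]$ and $[\Sigma\KU_p, L_{K(1)} L_{K(2)} E]$. The hypotheses on $K(1)_* E$ and $K(1)_* L_{K(2)} E$ enter through \Cref{lem:torfree-even}, which shows $[\KU_p, L_{K(1)} E]$ is torsionfree and $[\Sigma\KU_p, L_{K(1)} E] = 0$ by exhibiting $\KU_p$ as a retract of $L_{K(1)}\MU$-modules. Torsionfreeness of $[\bu, L_{K(1)\oplus K(2)}\gl_1(E)]$ then reduces the injectivity claim to the rational statement, where $\Sigma^\infty\CP^\infty \to \bu$ is an equivalence. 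None of this involves $B\Gamma_k$ or the spaces $F_m$ at all. Your plan would need an additional, nonformal input to kill $E^0(F_{p^2})$ (or equivalently to show the transition map on $\pi_1$ has the requisite injectivity for some other reason), and absent that it does not close.
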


\begin{exm}\label{exm:land-unique}
  Any Landweber exact ring spectrum $E$ of height $\leq 2$ whose homotopy is concentrated in even degrees satisfies the hypotheses of \Cref{thm:unique}.
  By \Cref{lem:Kn-Land}, $L_{K(2)} E$ is again Landweber exact and has homotopy concentrated in even degrees.
  It therefore suffices to show that $K(1)_* E$ is concentrated in even degrees.
  This is true because $K(1)_* E \cong K(1)_* \MU \otimes_{\pi_* \MU} \pi_* E$, and $K(1)_* \MU$ is concentrated in even degrees.
\end{exm}

Combining \Cref{thm:unique} with \Cref{exm:land-unique}, we obtain the uniqueness half of \Cref{thm:main-obst}.

\begin{exm}\label{exm:unique-tmf}
  The ring spectra $\Tmf_1 (n)$ satisfy the hypotheses of \Cref{thm:unique}.
  This follows from \cite[Propositions 2.4 and 2.6]{DylanTmf}, which imply that the $p$-complete complex $K$-theory of these spectra and their $K(2)$-localizations is $p$-torsionfree and concentrated in even degrees.
\end{exm}

Our proof of \Cref{thm:unique} will be based on the orientation theory of \cite{ABGHR} and the following lemma:

\begin{lem} \label{lem:torfree-even}
  Let $E$ denote an $\MU$-module with the property that $K(1)_* E$ is concentrated in even degrees.
  Then $[\KU_p, L_{K(1)} E]$ is torsionfree and $[\Sigma \KU_p, L_{K(1)} E] = 0$.
\end{lem}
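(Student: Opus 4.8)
The plan is to identify the mapping spectrum $F(\KU_p, L_{K(1)} E)$ with an explicit $K(1)$-local spectrum and then compute its bottom two homotopy groups. Recall that $L_{K(1)}\mathbb{S} \to \KU_p$ is a (pro-)Galois extension with group $\mathbb{Z}_p^\times$; in particular $\KU_p$ is dualizable in $\mathrm{Sp}_{K(1)}$ and self-dual with no suspension shift, $D_{K(1)}\KU_p \simeq \KU_p$. Hence for any $K(1)$-local spectrum $Y$ we have $F(\KU_p, Y) \simeq \KU_p \otimes^{K(1)} Y$, and taking $Y = L_{K(1)} E$ this becomes $V := L_{K(1)}(\KU_p \otimes E)$. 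Consequently $[\KU_p, L_{K(1)} E] \cong \pi_0 V$ and $[\Sigma \KU_p, L_{K(1)} E] \cong \pi_1 V$, so the lemma reduces to showing that $\pi_0 V$ is torsionfree and $\pi_1 V = 0$.

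First I would record the relevant structure of $V$. Since $\KU_p \otimes E$ is a $\KU_p$-module and $K(1)$-localization preserves module structures, $V$ is a $\KU_p$-module; in particular $\pi_* V$ is $2$-periodic. Since $V$ is $K(1)$-local it is derived $p$-complete, so $V \simeq \varprojlim_j V/p^j$ and there is an associated Milnor sequence. Finally, $V/p \simeq \KU_p/p \otimes E \simeq \KU/p \otimes E$, which is already $K(1)$-local (it is an $E(1)$-local $\KU$-module that becomes contractible after inverting $p$); as $\KU/p$ is a finite wedge of even suspensions of $K(1)$, we get $\pi_*(V/p) \cong (\KU/p)_* E \cong \bigoplus_i \Sigma^{2i} K(1)_* E$, which is concentrated in even degrees by hypothesis. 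In particular $\pi_{\mathrm{odd}}(V/p) = 0$.

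Now for the computation. Smashing $V$ with the cofiber sequences $\mathbb{S}/p^j \to \mathbb{S}/p^{j+1} \to \mathbb{S}/p$ and inducting on $j$ yields $\pi_{\mathrm{odd}}(V/p^j) = 0$ for all $j \geq 1$. From the cofiber sequence $V \xrightarrow{p} V \to V/p$ and $\pi_1(V/p) = 0$, multiplication by $p$ is injective on $\pi_0 V$ — so $\pi_0 V$ is torsionfree — and surjective on $\pi_1 V$ — so $\pi_1 V$ is $p$-divisible. Using the Milnor sequence together with $\pi_1(V/p^j) = 0$ and $2$-periodicity, $\pi_1 V \cong \varprojlim^1_j \pi_2(V/p^j) \cong \varprojlim^1_j \pi_0(V/p^j)$. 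The long exact sequences of $V \xrightarrow{p^j} V \to V/p^j$ give short exact sequences $0 \to \pi_0(V)/p^j \to \pi_0(V/p^j) \to \pi_1(V)[p^j] \to 0$, compatibly in $j$, with transition maps the evident surjections on the first term and multiplication by $p$ on the third. Since $\{\pi_0(V)/p^j\}$ is Mittag--Leffler, the $\varprojlim^1$ long exact sequence gives $\varprojlim^1_j \pi_0(V/p^j) \cong \varprojlim^1_j \big(\pi_1(V)[p^j],\, \cdot p\big)$; but $\pi_1 V$ is $p$-divisible, so this last tower is surjective, hence Mittag--Leffler, hence has vanishing $\varprojlim^1$. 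Therefore $\pi_1 V = 0$, as desired.

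I expect the last paragraph to be the crux: identifying $\pi_1 V$ with a $\varprojlim^1$, correctly pinning down the transition maps in the resulting towers (in particular that the one on the torsion quotients is multiplication by $p$), and noticing that the $p$-divisibility of $\pi_1 V$ — forced by $\pi_{\mathrm{odd}}(V/p) = 0$, independently of the conclusion — makes that tower surjective. A secondary point to be careful about is the $K(1)$-local self-duality of $\KU_p$ \emph{without} a suspension shift, since it is this that places $[\Sigma \KU_p, L_{K(1)} E]$ in the odd degree of $V$ where we have vanishing.
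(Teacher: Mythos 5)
The first move of your proof --- ``$\KU_p$ is dualizable in $\Sp_{K(1)}$ and self-dual, so $F(\KU_p, L_{K(1)}E) \simeq L_{K(1)}(\KU_p \otimes E)$'' --- is false, and it is the step on which the whole argument rests. By Hovey--Strickland, the dualizable objects of $\Sp_{K(n)}$ are exactly those with finite-dimensional $K(n)$-homology, and $K(1)_*\KU_p$ is infinite-dimensional; the fact that $L_{K(1)}\Ss \to \KU_p$ is a \emph{pro}finite Galois extension is precisely why $\KU_p$ fails to be dualizable (each finite layer $\KU_p^{hU}$, for $U$ open in $\Z_p^{\times}$, is dualizable, but $\KU_p$ itself, being the $K(1)$-local colimit over all such $U$, is not). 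Taking $E = \KU_p$ shows the two sides of your proposed equivalence genuinely differ: $\pi_0 F(\KU_p, \KU_p)$ is a completed group ring of the form $\Z_p[[\Z_p^{\times}]]$, whereas $\pi_0 L_{K(1)}(\KU_p \otimes \KU_p) = \KU^{\vee}_0 \KU_p$ is the ring $C(\Z_p^{\times},\Z_p)$ of continuous functions; these are $\Z_p$-linear duals of one another, not isomorphic.

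Your subsequent analysis of $V := L_{K(1)}(\KU_p \otimes E)$ --- evenness and pro-freeness of $\pi_* V$ via derived $p$-completeness and a $\varprojlim^1$ argument, hence $\pi_0 V$ torsionfree and $\pi_1 V = 0$ --- is a correct computation (it is essentially the pro-freeness statement the paper extracts from Hovey--Strickland's Proposition 8.4(f)), but it addresses the wrong spectrum. The duality mechanism the paper actually uses is Barthel--Heard's Proposition 1.14, which, using the pro-freeness of $\KU^{\vee}_* \KU_p$, identifies $\pi_* F(\KU_p, L_{K(1)}(\KU_p \otimes E))$ with $\Hom_{(\KU_p)_*}(\KU^{\vee}_* \KU_p, \KU^{\vee}_* E)$ --- a genuine $\Hom$-dual, not a smash --- and this is even and torsionfree because $\KU^{\vee}_* E$ is even and pro-free. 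The $\MU$-module hypothesis (which your argument never uses) then enters to exhibit $F(\KU_p, L_{K(1)}E)$ as a retract of $F(\KU_p, L_{K(1)}(\KU_p \otimes E))$, via the module action and the Hovey--Sadofsky splitting of $L_{K(1)}\MU \to L_{K(1)}E(1)$. You need some such bridge between $F(\KU_p, -)$ and $L_{K(1)}(\KU_p \otimes -)$; the self-duality you invoked does not supply one.
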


%

\begin{dfn}
  We say that a $(\KU_p)_*$-module is \textit{pro-free} if it is the $p$-completion of a free module.
  Moreover, given a spectrum $X$, we write $\KU^{\vee}_* (X)$ for $\pi_* L_{K(1)} (\KU \otimes X)$.
\end{dfn}

\begin{proof}
  Since $K(1)_* (\KU_p)$ is even, $\KU^{\vee} _* (\KU_p)$ is pro-free by \cite[Proposition 8.4(f)]{HoveyStrickland}.
  Therefore, by \cite[Proposition 1.14]{BarthelHeard}, there is an isomorphism
  \[\pi_* \Hom (\KU_p, L_{K(1)} (\KU_p \otimes E) \cong \Hom_{(\KU_p)_*} (\KU^{\vee}_* (\KU_p), \KU^{\vee}_* (E)).\]
  By assumption, $K(1)_* (E)$ is even and hence $\KU^{\vee} _* (E)$ is even and pro-free by \cite[Proposition 8.4(f)]{HoveyStrickland}. In particular, it is torsionfree, so that $\Hom_{(\KU_p)_*} (\KU^{\vee} _* (\KU_p), \KU^{\vee} _* (E))$ is even and torsionfree.

  The result then follows from the following facts:
  \begin{enumerate}
    \item Since $E$ is an $\MU$-module, the unit map $L_{K(1)} E \to L_{K(1)} (\MU \otimes E)$ admits a splitting given by the module structure map.
    \item There is a splitting of the map $L_{K(1)} \MU \to L_{K(1)} E(1)$ which is compatible with the unit map \cite[Theorem 4.1]{HoveySadofsky}.
    \item There is an equivalence of spectra $\KU_p \simeq \bigoplus_{i=0} ^{p-2} \Sigma^{2i} L_{K(1)} E(1)$. \qedhere
  \end{enumerate}
\end{proof}

\begin{proof}[Proof of \Cref{thm:unique}]
  Using the pullback square of sets (\ref{eq:pullback}), we may assume that $E$ is $p$-complete.

  Recall the map $\Sigma^{\infty} \CP^{\infty} \to \bu$ which is adjoint to the canonical map $\CP^\infty \to \BU$.
  By orientation theory \cite{ABGHR}, we must show that
\[[\bu, \gl_1 (E)] \to [\Sigma^{\infty} \CP^{\infty}, \gl_1 (E)]\]
  is injective.
  Since $E$ is $p$-complete, $\gl_1 (E)$ agrees with $\gl_1 (E)^{\wedge} _p$ in degrees $\geq 2$, so we may as well replace the former by the latter.
  Letting $F$ denote the fiber of the map $\gl_1 (E) \to L_2 \gl_1 (E)$,
  we find that there is a fiber sequence
  \[F^{\wedge} _p \to \gl_1 (E)^{\wedge}_p \to L_{K(1) \oplus K(2)} \gl_1 (E).\]
  It therefore suffices to show that
  \[[\bu, F^{\wedge}_p] \to [\Sigma^{\infty} \CP^{\infty}, F^{\wedge}_p]\]
  and
  \[[\bu, L_{K(1) \oplus K(2)} \gl_1 (E)] \to [\Sigma^{\infty} \CP^{\infty}, L_{K(1) \oplus K(2)} \gl_1 (E)]\]
  are injective.
  The first is injective because $F^{\wedge} _p$ is $3$-coconnective by \cite[Theorem 4.11]{AHR} and the cofiber of $\Sigma^{\infty} \CP^{\infty} \to \bu$ is $4$-connective.

  To prove that the second is injective, we first note that the Bousfield--Kuhn functor \cite{Bousfield,Kuhn} and the chromatic fracture square for $L_{K(1) \oplus K(2)} \gl_1 (E)$ imply that there is an exact sequence
  \[[\Sigma \KU_p, L_{K(1)} L_{K(2)} E] \to [\bu, L_{K(1) \oplus K(2)} E] \to [\KU_p, L_{K(1)} E].\]
  Applying \Cref{lem:torfree-even}, we learn that $[\bu, L_{K(2) \oplus K(1)} E]$ is torsionfree.
  It therefore injects into its rationalization, so that the result follows from the fact that $\Sigma^{\infty} \CP^{\infty} \to \bu$ is a rational equivalence.
\end{proof}

\begin{rmk} \label{rmk:periodic-unique}
  Balderrama has shown that \textit{periodic} complex orientations of height $2$ Lubin--Tate theories $E(k,\G)$ which satisfy a version of the Ando criterion admit lifts to periodic $\Einf$-complex orientations $\MUP \to E(k,\G)$ \cite[Theorem 6.5.3(3)]{Bald}.
  In this remark, we prove that such lifts are unique up to homotopy.

  We begin with a result which has been proven by Rezk at height $2$ (which is the case that we use) \cite{RezkHL} and in general is an unpublished theorem of Hopkins and Lurie. Let $\overline{k}$ denote the algebraic closure of $k$. Then we have:
  \begin{align*}
    \pi_* \mathbb{G}_m (E(\overline{k}, \G)) \coloneqq \pi_* \Map (\Z, \gl_1 (E(\overline{k}, \G))) \cong \begin{cases} \overline{k}^{\times} &*=0 \\ \Z_p &*=3 \\ 0 &\text{otherwise}. \end{cases}
  \end{align*}
  On $\pi_0$, the map $\Z \to \gl_1 (E(\overline{k}, \G))$ corresponding to $a \in \overline{k}^{\times}$ picks out the Teichmuller lift $[a]$.

  Since $\Gal(k)$ has $p$-cohomological dimension at most $1$ \cite[Proposition 6.1.9]{Galois}, it follows that
  \[\pi_0 \G_m (E(k, \G)) \cong k^{\times}.\]
  In particular, the map $\pi_0 \G_m (E(k,\G)) \to \pi_0 \gl_1 (E(k,\G))$ is injective.
  Now, by orientation theory it suffices to show that
  \[[\ku, \gl_1 (E(k,\G))] \to [\Sigma^\infty _+ \CP^\infty, \gl_1 (E(k,\G))]\]
  is injective.
  By what we have proven above about uniqueness of $\Einf$-complex orientations, it suffices to show that
  \[[\Z, \gl_1 (E(k,\G))] \to [\Ss^0, \gl_1 (E(k, \G))]\]
  is injective, which is what we showed above.

  As noted in \cite[Remark 6.5.4]{Bald}, this implies that the Goerss--Hopkins obstruction group $\Ext^2_{\Delta} (\hat{Q}(E(k,\G)_0 ^{\wedge} \MUP), \omega)$ is equal to $0$. 
\end{rmk}

\section{The level $n$ elliptic genus} \label{sec:genus}
\begin{cnv}
  In this section $n$ will denote an integer greater than or equal to $2$.
\end{cnv}

In this section, we will prove \Cref{thm:main-elliptic}, which states that the Hirzebruch level $n$ elliptic genus lifts uniquely up to homotopy to an $\Einf$-complex orientation $\MU \to \tmf_1 (n)$.

We will begin by recalling some background material about $\Einf$-rings of topological modular forms with level-$\Gamma_1 (n)$ structures in \Cref{sec:tmf-level}.
In \Cref{sec:theta}, we recall from \cite{AHSCube, AHS} how complex orientations may be described in terms of $\Theta^1$-structures.
We then describe the level $n$ elliptic genus in this language, following Meier \cite[\S 3]{Meierleveln}.
In \Cref{sec:reduction}, we show that a complex orientation for $\TMF_1 (n)$ satisfies the Ando criterion at $p$ if and only if its composition along a map $\TMF_1 (n) \to E(k, \G)$ to a Lubin--Tate theory does.

In \Cref{sec:Ando}, we recall from \cite{AHS} how the Ando criterion for Lubin--Tate theories may be rephrased in terms of $\Theta^1$-structures.
We then prove that the Hirzebruch level $n$ genus satisfies the Ando criterion.
As a consequence of \Cref{thm:main-obst}, it lifts uniquely up to homotopy to an $\Einf$-ring map $\MU \to \TMF_1 (n)$.
Finally, in \Cref{sec:lift}, we prove that this $\Einf$-ring map admits a unique up to homotopy lift to an $\Einf$-ring map $\MU \to \tmf_1 (n)$, completing the proof of \Cref{thm:main-elliptic}.

\subsection{Topological modular forms with level structures}\label{sec:tmf-level}

In this section, we will recall some basic facts about the $\Einf$-rings $\tmf_1 (n)$, $\Tmf_1 (n)$ and $\TMF_1 (n)$ of topological modular forms with level-$\Gamma_1 (n)$ structure.
We begin by recalling the algebraic background.

\begin{dfn}
  We let $\M_1 (n)$ denote the Deligne--Mumford moduli stack of elliptic curves with level-$\Gamma_1 (n)$ structure over $\Z[\frac{1}{n}]$.
  Concretely, given a scheme $S$ on which $n$ is invertible, we have
  \[\M_1 (n) (S) = \text{elliptic curves } E \text{ over } S \text{ with a point } P \in E[n] (S) \text{ of exact order }n.\]

  Moreover, we write $\overline{\M_1} (n)$ for the Deligne--Rapoport moduli stack of generalized elliptic curves with level-$\Gamma_1 (n)$ structure \cite{DR}. This is again a Deligne--Mumford stack over $\Z[\frac{1}{n}]$, and $\M_1 (n) \subset \overline{\M_1} (n)$ is an open substack.

  We denote the universal family of curves by $\pi : \C \to \overline{\M_1} (n)$, and write $\pi^{\sm} : \C^{\sm} \to \overline{\M_1} (n)$ for the smooth locus. Then $\C^{\sm}$ admits admits the natural structure of a group scheme, and we write $\omega$ for the line bundle of invariant differentials.

  Finally, we write $\M = \M_1 (1)$ and $\overline{\M} = \overline{\M_1} (1)$ for the moduli stacks of (generalized) elliptic curves without level structure.
\end{dfn}

\begin{rec}[{Goerss--Hopkins--Miller \cite[Chapter 12]{tmfBook}, Lurie \cite{ECII}, Hill--Lawson \cite{HLtmf}}]
  There is a sheaf $\OO^{\top}_{\overline{\M}}$ of $\E_\infty$-ring spectra on the Kummer log-\'etale site of $\overline{\M}$ with the following properties:
  \begin{enumerate}
    \item There is a natural isomorphism of sheaves $\pi_0 \OO^{\top}_{\overline{\M}} \cong \OO_{\overline{\M}}$.
    \item There are natural isomorphisms of quasicoherent sheaves $\pi_{2i} \OO^{\top} _{\overline{\M}} \cong \omega^{i}$ and $\pi_{2i+1} \OO^{\top}_{\overline{\M}} \cong 0$.
    \item Write $\widehat{\pi^{\sm}} : \widehat{\C^{\sm}} \to \overline{\M}$ for the completion of $\pi^{\sm} : \C^{\sm} \to \overline{\M}$ along the zero section.
      There is a natural isomorphism of sheaves of rings $\pi_0 \Hom (\Sigma^\infty _+ \CP^{\infty}, \OO^{\top}) \cong (\widehat{\pi^{\sm}})_* \OO_{\widehat{\C^{\sm}}}$.
  \end{enumerate}

  Since the natural morphisms $\overline{\M_1} (n) \to \overline{\M}$ are Kummer log-\'etale (we refer the reader to \cite{HLtmf} for more details), we may define $\Einf$-rings:
  \[\TMF_1 (n) \coloneqq \Gamma(\M_1 (n), \OO^{\top})\]
  and
  \[\Tmf_1 (n) \coloneqq \Gamma(\overline{\M_1 (n)}, \OO^{\top}).\]
\end{rec}

By definition, there are spectral sequences
\[\H^s (\M_1 (n), \omega^i) \Rightarrow \pi_{2i-s} \TMF_1 (n)\]
and
\[\H^s (\overline{\M_1} (n), \omega^i) \Rightarrow \pi_{2i-s} \Tmf_1 (n).\]

\begin{rmk}\label{rmk:tmf1n-nice}
  Let $n \geq 2$. It follows from \cite[Proposition 2.4(4)]{MeierAddDec} that $\H^s (\M_1 (n), \omega^i) = 0$ for all $s > 0$, so that $\TMF_1 (n)$ has homotopy groups concentrated in even degrees and that there are natural isomorphisms
\[\pi_{2i} (\TMF_1 (n)) \cong \Gamma(\M_1 (n), \omega^i).\]

  Moreover, $\TMF_1 (n)$ is Landweber exact. Indeed, this is a consequence of flatness of $\M_1 (n)$ over $\Z[\frac{1}{n}]$, the integrality of $\M_1 (n)_{\F_p}$ and the fact that the formal group of an elliptic curve is of height $\leq 2$.
\end{rmk}

However, because the groups $\H^1 (\overline{\M_1}, \omega^i)$ do not in general vanish, we do not have a similar theorem for $\Tmf_1 (n)$.
Instead, we have the $\Einf$-ring spectrum $\tmf_1 (n)$ from \cite{Meierleveln}.

\begin{rec}[{\cite[Theorem 1.1]{Meierleveln}}]
  There is an essentially unique connective $\Einf$-ring spectrum $\tmf_1 (n)$ whose homotopy groups are concentrated in even degrees and which is equipped with an $\Einf$-ring map
  \[\tmf_1 (n) \to \Tmf_1 (n)\]
  such that the induced maps
  \[\pi_{2i} \tmf_1 (n) \to \pi_{2i} \Tmf_1 (n) \to \Gamma(\overline{\M_1} (n), \omega^{i})\]
  are isomorphisms.
\end{rec}

\begin{rmk}
  There is a sequence of natural maps
  \[\tmf_1 (n) \to \Tmf_1 (n) \to \TMF_1 (n).\]
\end{rmk}

\subsection{$\Theta^1$-structures}\label{sec:theta}
In this section, we describe complex orientations in terms of $\Theta^1$-structures and give a description of the Hirzebruch level $n$ genus in this language. 

Suppose that we are given a base Deligne--Mumford stack $S$ and a formal group or generalized elliptic curve $G$ over $S$. We denote the structure map by $p : G \to S$ and the zero section by $0 : S \to G$.
Given a line bundle $\L$ on $G$, we set 
\[\Theta^1 (\L) \coloneqq p^* 0^* \L \otimes \L^{-1}.\]
There is a canonical trivialization $0^* \Theta^1 (\L) \cong \OO_{S}$.

\begin{dfn}
  A $\Theta^1$-structure on a line bundle $\L$ over $G$ is a trivialization of $\Theta^1 (\L)$ which pulls back to the canonical trivialization of $0^* \Theta^1 (\L)$.

  Equivalently, a $\Theta^1$-structure on $\L$ is an isomorphism $p^* 0^* \L \cong \L$ which pulls back to the canonical isomorphism $0^* p^* 0^* \L \cong 0^* \L$.
\end{dfn}

\begin{rmk} \label{rmk:theta-inv}
  It is clear from the definition that there is a natural bijection between $\Theta^1$-structures on $\L$ and $\L^{-1}$.
\end{rmk}

\begin{rmk}
  Note that a line bundle $\L$ on $G$ admits a $\Theta^1$-structure if and only if it is pulled back from a line bundle on $S$ via the structure map $p$.
\end{rmk}

When $G$ is a generalized elliptic curve, $\Theta^1$-structures are unique when they exist:

\begin{lem} \label{lem:ell-theta-unique}
  Suppose that $G$ is a generalized elliptic curve. Then the set of $\Theta^1$-structures on a line bundle $\L$ over $G$ is either empty or consists of a single element.
\end{lem}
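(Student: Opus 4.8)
The plan is to reduce the statement to a computation on global sections of $\Theta^1(\L)$, using the fact that a $\Theta^1$-structure is an element of a torsor. Two $\Theta^1$-structures on $\L$ differ by an automorphism of the trivial bundle $\Theta^1(\L) \cong \OO_S$ which restricts to the identity along the zero section $0 : S \to G$; concretely, if $s_0, s_1$ are two $\Theta^1$-structures, then $s_1 = u \cdot s_0$ for a unit $u \in \Gamma(G, \OO_G^{\times})$ with $0^* u = 1$. So the claim is equivalent to the assertion that any such $u$ equals $1$, i.e.\ that the restriction map $\Gamma(G, \OO_G^{\times}) \to \Gamma(S, \OO_S^{\times})$ along the zero section is injective.

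First I would reduce to the case where $S$ is affine, say $S = \Spec R$, since the question is local on $S$ and both $\Theta^1$-structures and the sheaf $\OO^\times$ are of local nature; injectivity of a sheaf map may be checked locally. Next, since $G$ is a generalized elliptic curve, the structure map $p : G \to S$ is proper, flat, and geometrically connected with geometrically reduced fibers (the fibers are smooth elliptic curves or Néron polygons). The key input is then the standard fact that for a proper flat morphism $p : G \to S$ with geometrically connected and geometrically reduced fibers, the natural map $\OO_S \to p_* \OO_G$ is an isomorphism (this is "cohomology and base change" plus the connectedness/reducedness of fibers forcing $H^0$ of each fiber to be the base field). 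Given this, $\Gamma(G, \OO_G) = \Gamma(S, p_* \OO_G) = \Gamma(S, \OO_S) = R$, and more importantly the same identification is compatible with the zero section: the composite $R = \Gamma(S,\OO_S) \to \Gamma(G, \OO_G) \xrightarrow{0^*} \Gamma(S, \OO_S) = R$ is the identity. Hence the inclusion $R \hookrightarrow \Gamma(G,\OO_G)$ is actually an equality, so in particular $\Gamma(G, \OO_G^\times) = R^\times = \Gamma(S, \OO_S^\times)$, and the restriction along $0$ is injective (indeed bijective).

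From this the lemma follows: if $s_0, s_1$ are two $\Theta^1$-structures on $\L$ then $s_1 s_0^{-1}$ is a global unit on $G$ restricting to $1$ along $0$, hence is the constant function $1$, so $s_0 = s_1$. The main obstacle — really the only nontrivial point — is justifying $\OO_S \xrightarrow{\sim} p_* \OO_G$ for generalized elliptic curves: one must know that the fibers of $\pi^{\sm}$ (or rather of $\pi$) over geometric points are connected and reduced, which holds because they are either elliptic curves or standard Néron $N$-gons, and then invoke the semicontinuity/base-change machinery to globalize $H^0(\text{fiber}, \OO) = \kappa(s)$ to $p_*\OO_G = \OO_S$. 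Everything else is formal manipulation with torsors under $\Gamma(G, \OO_G^\times)$. I would also remark that this recovers \Cref{rmk:theta-inv} compatibly, since the identification of $\Theta^1$-structures on $\L$ and $\L^{-1}$ is via inversion of units, which preserves the constant function $1$.
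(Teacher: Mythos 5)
Your proof is correct and follows essentially the same route as the paper's: both reduce the question to the injectivity (in fact bijectivity) of $\Gamma(G,\OO_G) \to \Gamma(S,\OO_S)$ along the zero section, and both deduce this from $\OO_S \xrightarrow{\sim} p_*\OO_G$ for the proper flat morphism $p$ with geometrically connected and geometrically reduced fibers (the paper cites Stacks Project Tag 0E0L for exactly this). You supply somewhat more detail on the torsor bookkeeping and the compatibility with the zero section, but the argument is the same.
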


\begin{proof}
  The set of isomorphisms $p^* 0^* \L \cong \L$ is a torsor for $\OO_G (G)^{\times}$, whereas the set of isomorphisms $0^* p^* 0^* \L \cong 0^* \L$ is a torsor for $\OO_S (S)^{\times}$.
  It therefore suffices to show that the map $\OO_G (G) \to \OO_S (S)$ induced by pulling back along the zero section is an isomorphism.
  This follows from \cite[\href{https://stacks.math.columbia.edu/tag/0E0L}{Tag 0E0L}]{stacks-project}.
\end{proof}

Now let $E$ denote an even weakly periodic homotopy commutative ring spectrum, and let $\G_E = \Spf E^0 (\CP^\infty)$ denote the associated formal group over $\Spec \pi_0 E$. We denote by $\gamma_1$ the canonical line bundle over $\CP^\infty$ and let $\OO_{\G_E} (1)$ denote the line bundle over $\G_E$ corresponding to
\[E^0 (\Th(\gamma_1)) \cong \widetilde{E^0} (\CP^\infty) \cong \ker (\OO_{\G_E} \xrightarrow{0^*} \pi_0 E).\]
Then we have the following theorem:

\begin{prop}[{{\cite[Theorem 2.48]{AHSCube}}}] \label{prop:Theta1-orient}
  There is a natural bijection between complex orientations of $E$ and $\Theta^1$-structures on $\OO_{\G_E} (1)$.
\end{prop}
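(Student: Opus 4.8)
The plan is to translate both sides of the claimed bijection into the language of modules over $E^0(\CP^\infty)$, equivalently sheaves on $\G_E$. Write $p : \G_E \to S$ and $0 : S \to \G_E$ for the structure map and zero section, with $S = \Spec \pi_0 E$, and set $\omega_E := 0^* \OO_{\G_E}(1) = \widetilde{E^0}(S^2)$; weak periodicity of $E$ makes $\omega_E$ an invertible $\pi_0 E$-module, and there is a tautological identification $0^* \Theta^1(\OO_{\G_E}(1)) = \omega_E \otimes \omega_E^{-1} \cong \OO_S$ (which is the ``canonical trivialization'' appearing in the definition of a $\Theta^1$-structure).

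First I would unwind what a complex orientation is in these terms. Using $\Th(\gamma_1) \simeq \CP^\infty$ and weak periodicity, one has a natural identification
\[\widetilde{E}^2(\CP^\infty) \;\cong\; \Gamma\bigl(\G_E,\ \OO_{\G_E}(1) \otimes p^* \omega_E^{-1}\bigr) \;=\; \Hom_{\OO_{\G_E}}\bigl(p^* \omega_E,\ \OO_{\G_E}(1)\bigr),\]
where the twist by $\omega_E^{-1}$ records the cohomological degree shift and $\OO_{\G_E}(1)$ is the sheaf attached to $\widetilde{E^0}(\Th(\gamma_1))$ as in the statement. Under this identification a class $u$ corresponds to a morphism $\psi_u : p^* \omega_E \to \OO_{\G_E}(1)$, and — chasing the canonical trivialization $0^*(\OO_{\G_E}(1) \otimes p^* \omega_E^{-1}) \cong \OO_S$ together with the suspension isomorphism $\widetilde{E}^2(S^2) \cong \pi_0 E$ — the restriction of $u$ along $\CP^1 = S^2 \hookrightarrow \CP^\infty$, i.e. along the zero section, is exactly $0^* \psi_u$. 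Hence a complex orientation of $E$ is precisely a morphism $\psi : p^* \omega_E \to \OO_{\G_E}(1)$ with $0^* \psi = \mathrm{id}_{\omega_E}$.

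Next I would observe that such a $\psi$ is automatically an isomorphism: its kernel and cokernel are $E^0(\CP^\infty)$-modules complete for the augmentation ideal $I_0 = \widetilde{E^0}(\CP^\infty)$ (since $E^*(\CP^\infty) = \varprojlim_N E^*(\CP^N)$ with vanishing $\varprojlim^1$), and they vanish because $\coker(\psi) \otimes_{\OO_{\G_E}} \OO_S = \coker(0^*\psi) = 0$, and similarly for the kernel, by Nakayama. Therefore complex orientations of $E$ are in natural bijection with isomorphisms $\psi : p^* \omega_E \xrightarrow{\ \sim\ } \OO_{\G_E}(1)$ restricting to the identity on the zero section. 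Finally, $\psi^{-1}$ is then an isomorphism $\OO_{\G_E}(1) \xrightarrow{\ \sim\ } p^* 0^* \OO_{\G_E}(1)$, i.e. a trivialization of $\Theta^1(\OO_{\G_E}(1)) = p^* 0^* \OO_{\G_E}(1) \otimes \OO_{\G_E}(1)^{-1}$, and the condition $0^* \psi = \mathrm{id}$ says precisely that it pulls back to the canonical trivialization of $0^* \Theta^1(\OO_{\G_E}(1))$ — that is, a $\Theta^1$-structure on $\OO_{\G_E}(1)$. Each step is visibly natural in $E$.

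I expect the only genuine content to sit in the second paragraph: correctly identifying the degree-shifting twist by $\omega_E$ and verifying that the topological normalization ``restricts to the unit on $\CP^1$'' corresponds on the nose — not merely up to a unit — to the algebro-geometric normalization ``pulls back to the canonical trivialization of $0^* \Theta^1$''. The invertibility of $\psi$ is a routine completeness/Nakayama argument (and is vacuous when $\Or(E) = \varnothing$, in which case both sides are empty), and the passage from $\psi$ to a $\Theta^1$-structure is formal once the dictionary is in place.
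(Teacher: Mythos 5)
Your proposal is correct and follows essentially the same route as the paper's own proof. The paper first observes that since $E^0(\CP^\infty) \to \pi_0 E$ is an infinitesimal thickening, a class $u \in \widetilde{E}^2(\CP^\infty)$ restricting to the unit automatically gives an \emph{isomorphism} $E^0(\CP^\infty) \cong \widetilde{E}^2(\CP^\infty)$ of $E^0(\CP^\infty)$-modules — this is exactly your completeness/Nakayama step, just stated more tersely. The paper then invokes \Cref{rmk:theta-inv} to pass to $\OO_{\G_E}(1)^{-1}$ so that $\widetilde{E}^2(\CP^\infty)$ is literally the global sections of $\Theta^1(\OO_{\G_E}(1)^{-1})$; you instead work directly with $\OO_{\G_E}(1)$ and maps $p^*\omega_E \to \OO_{\G_E}(1)$, inverting at the end, but the identification $\widetilde{E}^2(\CP^\infty) \cong \Gamma(\G_E, \OO_{\G_E}(1) \otimes p^*\omega_E^{-1}) = \Gamma(\G_E, \Theta^1(\OO_{\G_E}(1)^{-1}))$ is the same sheaf either way. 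So the difference is purely presentational.

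One small imprecision: you cannot directly apply ``Nakayama'' to $\ker(\psi)$, because $\ker(0^*\psi)$ is not the same as $\ker(\psi) \otimes_{\OO_{\G_E}} \OO_S$ (base change is not left exact). The clean argument is the one implicitly used in the paper: once $\psi : p^*\omega_E \to \OO_{\G_E}(1)$ is surjective — which Nakayama does give, since $\coker(\psi)$ is a finitely generated module over the $I$-adically complete local-type ring $E^0(\CP^\infty)$ whose reduction mod $I$ vanishes — one notes that both source and target are invertible (indeed free of rank one over $E^0(\CP^\infty)$), so a surjection is automatically an isomorphism; equivalently, after trivializing, $\psi$ is multiplication by an element of $E^0(\CP^\infty)$ whose image in $\pi_0 E$ is a unit, and units lift along the topologically nilpotent ideal $\widetilde{E}^0(\CP^\infty)$. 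This is exactly what ``infinitesimal thickening'' is doing in the paper's one-line justification.
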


\begin{proof}
  Recall that a complex orientation of $E$ consists of an element of $\widetilde{E^2} (\CP^{\infty})$ which restricts to the unit along the map $\widetilde{E^2} (\CP^\infty) \to \widetilde{E^2} (\CP^1) \cong \pi_0 E$.
  Since the map $E^0 (\CP^\infty) \to \pi_0 E$ is an infinitesimal thickening, complex orientations of $E$ may be identified with $E^0 (\CP^\infty)$-module isomorphisms $E^0 (\CP^{\infty}) \cong \widetilde{E^2} (\CP^{\infty})$ that become equal to the canonical isomorphism $\widetilde{E^2} (\CP^1) \cong \pi_0 E$ after tensoring down along $E^0 (\CP^\infty) \to \pi_0 E$.

  By \Cref{rmk:theta-inv}, we may replace $\OO_{\G_E} (1)$ with $\OO_{\G_E} (1)^{-1}$ in the statement of the proposition.
  To prove the proposition, it therefore suffices to identify the global sections of $\Theta^1 (\OO_{\G_E} (1)^{-1})$ with $\widetilde{E^2} (\CP^{\infty})$ and pullback along the zero section with $\widetilde{E^2} (\CP^\infty) \to \widetilde{E^2} (\CP^1) \cong \pi_0 E$.
  This is an immediate consequence of the definitions.
%
\end{proof}

Even though $\tmf_1 (n)$ is not weakly even periodic, Meier has shown that its complex orientations may still be described in terms of $\Theta^1$-structures.

\begin{prop}[{\cite[Lemma 3.2]{Meierleveln}}] \label{prop:tmf-theta1}
  There is a natural bijection between complex orientations of $\tmf_1 (n)$ and $\Theta^1$-structures on $\OO_{\widehat{\C}} (1)$ over $\widehat{C} \to \overline{\M_1} (n)$.
\end{prop}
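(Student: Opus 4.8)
The plan is to reduce the statement for $\tmf_1(n)$ to the corresponding statement for $\TMF_1(n)$, which is covered by \Cref{prop:Theta1-orient} since $\TMF_1(n)$ is even weakly periodic and Landweber exact (\Cref{rmk:tmf1n-nice}). First I would recall that, by the definition of $\Or(-)$ and the fact that $\Sigma^{\infty-2}\CP^\infty$ is built from cells in even degrees, the space of complex orientations of a homotopy commutative ring $R$ with homotopy concentrated in even degrees is discrete, and its set of components is computed from the long exact sequence associated to the cofiber sequence $\CP^2 \to \CP^\infty \to \CP^\infty/\CP^2$; concretely, $\pi_0\Or(R)$ is the set of lifts in $R^0(\CP^\infty)$ of $1\in R^0(\CP^1)\cong \pi_0 R$ along the restriction map. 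Since $\tmf_1(n)$ has homotopy concentrated in even degrees, the same holds for it. Thus on both the topological side and the algebro-geometric side (where $\Theta^1$-structures on a fixed line bundle form a set, torsor-like over global units when nonempty), we are comparing discrete sets, and it suffices to exhibit a natural bijection.

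Next I would set up the comparison geometrically. The key point is that $\pi_0\Hom(\Sigma^\infty_+\CP^\infty, \OO^{\top})$ is identified with $(\widehat{\pi^{\sm}})_*\OO_{\widehat{\C^{\sm}}}$ as sheaves of rings on $\overline{\M}$, by property (3) of the recollection on $\OO^{\top}$, and this pulls back along the Kummer log-étale map $\overline{\M_1}(n)\to\overline{\M}$ to give the analogous statement over $\overline{\M_1}(n)$. Taking global sections and using that $\tmf_1(n) \to \Tmf_1(n)$ induces isomorphisms on $\pi_{2i}$ with $\Gamma(\overline{\M_1}(n),\omega^i)$, I would identify $\tmf_1(n)^0(\CP^\infty)$ with the ring of functions on the formal completion $\widehat{\C}\to\overline{\M_1}(n)$, and $\widetilde{\tmf_1(n)}^0(\CP^\infty)$ with the ideal cutting out the zero section, i.e.\ the global sections of the line bundle $\OO_{\widehat{\C}}(1)$. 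Running exactly the argument in the proof of \Cref{prop:Theta1-orient} — replacing $\OO(1)$ by its inverse via \Cref{rmk:theta-inv}, identifying global sections of $\Theta^1(\OO_{\widehat{\C}}(1)^{-1})$ with $\widetilde{\tmf_1(n)}^2(\CP^\infty)$ and pullback along the zero section with the restriction $\widetilde{\tmf_1(n)}^2(\CP^\infty)\to \widetilde{\tmf_1(n)}^2(\CP^1)\cong\pi_0\tmf_1(n)$ — then yields the desired bijection between complex orientations of $\tmf_1(n)$ and $\Theta^1$-structures on $\OO_{\widehat{\C}}(1)$.

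The main obstacle is that $\tmf_1(n)$ is \emph{not} weakly even periodic, so $\widehat{\C}\to\overline{\M_1}(n)$ is genuinely a formal group over a base on which the relevant line bundles need not be trivial; one cannot simply invoke \Cref{prop:Theta1-orient} as a black box. The work is in verifying that property (3) of $\OO^{\top}$, together with connectivity/evenness of $\tmf_1(n)$, really does produce the identification $\widetilde{\tmf_1(n)}^0(\CP^\infty) \cong \Gamma(\widehat{\C},\OO_{\widehat{\C}}(1))$ compatibly with the multiplicative structure and the zero-section restriction — i.e.\ that the "infinitesimal thickening" input of the proof of \Cref{prop:Theta1-orient} survives in this non-periodic setting. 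Once that identification is in place, the rest is formal and matches the periodic case verbatim. (One should also note that naturality in $n$ and compatibility with the maps $\tmf_1(n)\to\Tmf_1(n)\to\TMF_1(n)$ follow from naturality of all the identifications involved.)
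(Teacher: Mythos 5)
The paper does not supply its own proof of this proposition: it is quoted directly from \cite[Lemma 3.2]{Meierleveln} with no argument given, so there is nothing in the text for your proposal to match against.

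As an outline your strategy is reasonable: describe $\pi_0\Or(\tmf_1(n))$ as lifts of the unit along $\wt{\tmf_1(n)}^2(\CP^\infty)\to\wt{\tmf_1(n)}^2(\CP^1)$, use property (3) of $\OO^{\top}$ to relate this to $\widehat{\C}\to\overline{\M_1}(n)$, and then argue as in \Cref{prop:Theta1-orient}. But the step you flag as ``the work'' is exactly the content of the cited lemma, and you do not fill it in. Concretely, property (3) identifies $\pi_0\Hom(\Sigma^\infty_+\CP^\infty,\OO^{\top})$ with $(\widehat{\pi^{\sm}})_*\OO_{\widehat{\C^{\sm}}}$ only as \emph{sheaves}; passing to global sections over $\overline{\M_1}(n)$ brings in a descent spectral sequence with potentially nonzero $H^s(\overline{\M_1}(n),-)$ for $s>0$, and $\tmf_1(n)$ --- unlike $\Tmf_1(n)$ --- is not the global sections of a sheaf of $\Einf$-rings, so there is no a priori map from $\tmf_1(n)^*(\CP^\infty)$ to functions on $\widehat{\C}$. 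Both $\wt{\tmf_1(n)}^0(\CP^\infty)$ (via the Atiyah--Hirzebruch spectral sequence, which collapses by evenness) and $\Gamma(\widehat{\C},\OO_{\widehat{\C}}(1))$ (via the adic filtration) are complete filtered groups whose graded pieces are controlled by $\Gamma(\overline{\M_1}(n),\omega^i)$ and $H^1(\overline{\M_1}(n),\omega^i)$, but identifying them compatibly with the ring structure and the zero-section restriction is where the evenness of $\tmf_1(n)$ and the defining map $\tmf_1(n)\to\Tmf_1(n)$ must actually be used, and where the nonvanishing $H^1$'s over the compactified stack are the genuine obstacle. Until that identification is established, the proposal records a plan for a proof rather than a proof.
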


We now recall the treatment of Hirzebruch's level $n$ elliptic genus from \cite[\S 3]{Meierleveln}. The following proposition is a mild rephrasing of \cite[Lemma 3.3]{Meierleveln}.

\begin{prop}
  Let $P : \overline{\M_1} (n) \to \C^{\sm}$ denote the universal level-$\Gamma_1 (n)$ structure.
  \begin{enumerate}
    \item The pullback of the line bundle $\OO_{\C} ([0] - [P])$ on $\C$ to $\widehat{\C}$ is naturally isomorphic to $\OO_{\widehat{\C}} (1)$.
    \item There is a degree $n$ \'etale cover $q : \C' \to \C$ of generalized elliptic curves so that $q^* \OO_{\C} ([0] - [P])$ admits a (necessarily unique) $\Theta^1$-structure.
  \end{enumerate}
\end{prop}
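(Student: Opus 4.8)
The plan is to follow \cite[Lemma 3.3]{Meierleveln}, of which this proposition is essentially a rephrasing.

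For part (1): since the universal level structure $P$ has exact order $n \geq 2$, its image is everywhere disjoint from the zero section of $\C^{\sm}$, so the divisor $[P]$ does not meet the formal neighbourhood $\widehat{\C}$ of the zero section. The tautological section of $\OO_\C([P])$ therefore restricts to a nowhere-vanishing section over $\widehat{\C}$, and I would use this to produce a canonical isomorphism $\OO_\C([0]-[P])|_{\widehat{\C}} \cong \OO_\C([0])|_{\widehat{\C}}$; the right-hand side is $\OO_{\widehat{\C}}(1)$ by definition of the latter (cf. the description of $\OO_{\G_E}(1)$ preceding \Cref{prop:Theta1-orient}), up to a sign which, by \Cref{rmk:theta-inv}, will be immaterial in what follows. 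Over the cusps this disjointness persists because the level structure still lands in the smooth locus and is never the identity.

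For part (2): the cover I would use is the isogeny governed by the subgroup $\langle P \rangle \subset \C^{\sm}$ generated by the universal level structure. Since $P$ has exact order $n$, this is a constant $\Z/n$-subgroup scheme of $\C^{\sm}$, so its Cartier dual is $\mu_n$; as $n$ is invertible on $\overline{\M_1}(n)$, both are finite \'etale. Take $q : \C' \to \C$ to be the isogeny of generalized elliptic curves dual to the quotient isogeny $\widehat{q} : \C^{\sm} \to \C^{\sm}/\langle P \rangle$, so that $\deg q = n$, $\ker q \cong \mu_n$ is \'etale, and $(\C')^{\sm} \cong \C^{\sm}/\langle P \rangle$.

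To see that $q^* \OO_\C([0]-[P])$ admits a $\Theta^1$-structure, recall from the remark following \Cref{rmk:theta-inv} that this is equivalent to $q^* \OO_\C([0]-[P])$ being pulled back from $\overline{\M_1}(n)$. I would check this using the Abel--Jacobi identification $(\C')^{\sm} \xrightarrow{\ \sim\ } \Pic^0_{\C'/\overline{\M_1}(n)}$: under it, $q^* \OO_\C([0]-[P])$ corresponds, up to sign, to $\widehat{q}(P)$, which vanishes because $P \in \ker \widehat{q} = \langle P \rangle$. Hence $q^*\OO_\C([0]-[P])$ is canonically trivial, in particular pulled back from the base, so it carries a $\Theta^1$-structure; uniqueness of that structure is \Cref{lem:ell-theta-unique}. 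The main obstacle here is not the Picard-group bookkeeping but the construction of $q$ as a genuine isogeny of \emph{generalized} elliptic curves over \emph{all} of $\overline{\M_1}(n)$, including the cusps: one must check that the quotient of a N\'eron polygon by $\langle P \rangle$ is again (the smooth locus of) a generalized elliptic curve and that the formation of dual isogenies extends over the cuspidal locus. This is supplied by \cite[Lemma 3.3]{Meierleveln}, building on the Deligne--Rapoport theory \cite{DR}, which I would invoke rather than reprove.
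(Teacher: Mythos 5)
The paper offers no proof of this proposition beyond the citation to Meier's Lemma 3.3, and your sketch accurately reproduces the content of that lemma: the cover $q$ dual to the quotient by $\langle P\rangle$ (with $\ker q \cong \mu_n$, \'etale since $n$ is invertible) together with the Abel--Jacobi identification showing $q^*\OO_\C([0]-[P])$ corresponds to $\widehat{q}(P)=0$ is exactly the right construction. You also correctly flag the two places where care is needed---that the discrepancy between $\OO([0])$ and the ideal sheaf $\OO(-[0])$ is harmless for $\Theta^1$-structures by \Cref{rmk:theta-inv}, and that the genuinely delicate point is extending the isogeny of generalized elliptic curves over the cuspidal locus, which is supplied by Deligne--Rapoport and Meier and which you appropriately cite rather than reprove.
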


Since the induced map $\widehat{q} : \widehat{\C'} \xrightarrow{\sim} \widehat{\C}$ is an isomorphism, we obtain a $\Theta^1$-structure on $\OO_{\widehat{C}} (1)$, and hence by \Cref{prop:tmf-theta1} a complex orientation of $\tmf_1 (n)$.
This is the complex orientation corresponding to the \textit{Hirzebruch level $n$ elliptic genus}.

\subsection{Reduction to Lubin--Tate theory}\label{sec:reduction}
Our goal in this section is to prove \Cref{prop:reduction-LT} below.
This proposition implies that to verify the Ando criterion for the Hirzebruch level $n$ elliptic genus, it suffices to verify the Ando criterion after composition with a map $\TMF_1 (n) \to E(k, \G)$ to a Lubin--Tate theory.
This will be useful to us because work of Ando--Hopkins--Strickland \cite{AHS} rephrases the Ando criterion for Lubin--Tate theories in terms of $\Theta^1$-structures.

We begin by recalling some basic facts about about Lubin--Tate theories.

\begin{rec}[{Goerss--Hopkins \cite{GH}, Lurie \cite{ECII}}]
  Let $k$ denote a perfect field of characteristic $p > 0$ and let $\G$ denote a formal group of finite height $h$ over $k$.
  To the pair $(k, \G)$ one may associate an $\Einf$-ring spectrum $E(k,\G)$, known as the \textit{Lubin--Tate spectrum} of $(k,\G)$.
  The ring $\pi_0 E(k,\G)$ is naturally isomorphic to the universal deformation ring of $(k,\G)$, which is noncanonically isomorphic to $\mathbb{W}(k) [\![u_1, \dots, u_{h-1}]\!]$.
  We let $\Guniv$ denote the universal deformation of $\G$ over $\pi_0 E(k, \G)$, and denote by $\omega_{\Guniv}$ its module of invariant differentials.
  There are natural isomorphisms of $\pi_{2i} E(k,\G)$ with $\omega^{i} _{\Guniv}$.
\end{rec}

\begin{cnstr}
  Let $k$ denote a perfect field of characteristic $p > 0$. Associated to a supersingular $k$-point $(E, \alpha \in E[n] (k))$ of $\M_1 (n)$, there is a map of $\Einf$-ring spectra $\TMF_1 (n) \to E(k, \widehat{E})$. Indeed, this follows from the description of $E(k, \widehat{E})$ as an oriented deformation ring \cite[\S 6]{ECII}, Lurie's Serre--Tate theorem for strict abelian varieties \cite[\S 7]{ECI}, and the universal property of $\TMF_1 (n)$.
\end{cnstr}

\begin{prop} \label{prop:reduction-LT}
  Let $k$ denote a perfect field of characteristic $p$ and let $(E, \alpha \in E[n](k))$ denote an object of $\M_1 (n)(k)$ with $E$ a supersingular elliptic curve. Then a complex orientation of $\TMF_1 (n)$ satisfies the Ando criterion at $p$ if and only if its composite with the canonical map $\TMF_1 (n) \to E(k, \widehat{E})$ does.
\end{prop}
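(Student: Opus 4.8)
\emph{The ``only if'' direction} is immediate: the power operations $\Psi_u$, the Thom classes $t_u$, and the transfer ideal $I_{tr}$ are all natural with respect to maps of $\Hinf$-rings, so if the complex orientation of $\TMF_1(n)$ satisfies the Ando criterion at $p$ then so does its composite with any $\Einf$-map out of $\TMF_1(n)$.

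\emph{For the ``if'' direction}, fix a complex orientation $u$ of $\TMF_1(n)$ and let $\delta_u = \Psi_u(t_u(\gamma_1)) - t_u(\rho\boxtimes\gamma_1) \in A \coloneqq \TMF_1(n)^{2p}(\Th(\rho\boxtimes\gamma_1))/I_{tr}$ be its Ando discrepancy at $p$. We are given that $\delta_u$ dies in $B \coloneqq E(k,\widehat{E})^{2p}(\Th(\rho\boxtimes\gamma_1))/I_{tr}$, and it suffices to show that the map $A \to B$ induced by the canonical $\Einf$-map is injective.

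The plan is to identify $A$ and $B$ algebraically. Since $\TMF_1(n)$ is even, Landweber exact, weakly even periodic Zariski-locally on $\M_1(n)$, and has $\H^{>0}(\M_1(n),\omega^{\otimes *}) = 0$ by \Cref{rmk:tmf1n-nice}, its cohomology of $\CP^\infty$, $\BC_p$ and the Thom spectra in question is computed from the structure sheaves of the formal group $\widehat{\C}$ and its $p$-torsion $\widehat{\C}[p]$ over $\M_1(n)^{\wedge}_p$ (nothing changes upon $p$-completing). Choosing the Thom isomorphism given by $u$, the group $A$ becomes the module of global sections over $\M_1(n)^{\wedge}_p$ of $\OO_{\M_1(n)^{\wedge}_p}[[t,s]]/\bigl([p]_{\widehat{\C}}(t)/t\bigr)$, where $t$ is a coordinate on the formal group attached to the $\BC_p$-factor, $[p]_{\widehat{\C}}(t)$ is its $p$-series, $s$ is a coordinate for the $\CP^\infty$-Thom twist, and quotienting by $[p]_{\widehat{\C}}(t)/t$ implements the transfer ideal. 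The key point is that this coherent sheaf $\mathcal{A}$ is cut out by the single nonzerodivisor $[p]_{\widehat{\C}}(t)/t$ in the regular formal scheme $\Spf \OO_{\M_1(n)^{\wedge}_p}[[t,s]]$, hence is Cohen--Macaulay with no embedded associated primes; moreover its support is all of $\Spec \OO_{\M_1(n)^{\wedge}_p}$, since $[p]_{\widehat{\C}}(t)/t$ fails to be a unit in the relevant completed local rings. As $\M_1(n)_{\FF_p}$ is integral, so is $\M_1(n)^{\wedge}_p$, and therefore $\mathcal{A}$ has a unique associated prime, namely the generic point; thus $A$ is torsion-free over the domain $R \coloneqq \pi_0\TMF_1(n)^{\wedge}_p$, and $A \hookrightarrow A_{\mathfrak{q}}$ for every prime $\mathfrak{q}$ of $R$, in particular for $\mathfrak{q} = \mathfrak{m}_x$ the maximal ideal at the point $x$ corresponding to $(E,\alpha)$.

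\emph{Finally, one identifies $B$ as a completion of $A$.} By the Serre--Tate theorem invoked in the Construction preceding this proposition, the canonical map $\TMF_1(n)^{\wedge}_p \to E(k,\widehat{E})$ is, on $\pi_0$, the completion of $R = \OO(\M_1(n)^{\wedge}_p)$ at $x$ (followed, if $k \supsetneq \kappa(x)$, by the corresponding base change on Witt vectors); in particular $R' \coloneqq \pi_0 E(k,\widehat{E})$ is a regular local ring, faithfully flat over $R_{\mathfrak{m}_x}$, and the same computation as above (or base change along $R \to R'$) identifies $B$ with $\mathcal{A}$ pulled back along $R \to R'$. The composite
\[A \hookrightarrow A_{\mathfrak{m}_x} = A\otimes_R R_{\mathfrak{m}_x} \hookrightarrow A\otimes_R R' \longrightarrow B\]
has injective first two arrows (localization of a torsion-free module over a domain; faithfully flat base change), and agrees with the map $A \to B$; hence $A \to B$ is injective and $\delta_u = 0$.

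\emph{The main obstacle} is the algebraic identification of the third paragraph: verifying that the transfer-ideal quotients of the $\TMF_1(n)$- and $E(k,\widehat{E})$-cohomology of these Thom spectra genuinely are the global sections of a coherent sheaf on $\M_1(n)^{\wedge}_p$, resp.\ its pullback to $\Spec R'$, cut out by a nonzerodivisor in a regular ambient. (It is this Cohen--Macaulayness, together with the integrality of $\M_1(n)_{\FF_p}$, that forces torsion-freeness over $\pi_0$ and lets a single supersingular point detect the whole group.) The homotopy-theoretic bookkeeping identifying $\BC_p$- and $\CP^\infty$-cohomology with functions on the $p$-torsion of the formal group, and killing higher sheaf cohomology via \Cref{rmk:tmf1n-nice}, is routine but must be carried out, as must the compatibility of the Serre--Tate map with these algebraic models; one also handles the mild stackiness of $\M_1(n)$ for $n \in \{2,3\}$ by working \'etale-locally.
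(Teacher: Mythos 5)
Your reduction of the ``if'' direction to injectivity of the map on Thom-spectrum cohomology mod transfer, and your use of the integrality of $\M_1(n)_{\F_p}$, are in the same spirit as the paper, but the final step of your argument has a genuine gap. You conclude injectivity of $A \to B$ from the chain $A \hookrightarrow A_{\m_x} \hookrightarrow A \otimes_R R' \to B$ together with the claim that ``base change along $R \to R'$ identifies $B$ with $\mathcal{A}$ pulled back along $R \to R'$.'' That identification is false: since $R \to R' = \pi_0 E(k,\widehat{E})$ is a completion at the supersingular point (followed by a Witt-vector extension), and the groups in question are built from $E^0(\BC_p)/I_{tr} \cong E^0[\![t]\!]/\langle p \rangle(t)$, the Lubin--Tate side $B$ is a \emph{completed} base change of $A$, not the plain tensor $A \otimes_R R'$. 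The natural map $A \otimes_R R' \to B$ is surjective but in general not injective (for the same reason that $\Z_p \otimes_{\Z} \Z_p \to \Z_p$ is not injective), so injectivity of $A \hookrightarrow A \otimes_R R'$ does not yield injectivity of $A \to B$. Moreover $A$ is not finitely generated over $R$ (already $\KU^0(\BC_p)/I_{tr} \cong \Z_p[\zeta_p]$ is not finite over $\Z$; over the $p$-complete modular base, $R[\![t]\!]/\langle p\rangle(t)$ secretly contains completions along the supersingular locus), so even after recasting the problem as ``injectivity into a completion'' you cannot invoke Krull intersection or flatness of completion for $A$ itself; this non-finiteness also undercuts your framing of $A$ as global sections of a coherent sheaf on $\M_1(n)^{\wedge}_p$, since $E^*(\BC_p)$ does not commute with Zariski localization on $\pi_0 E$.

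Injectivity into the completion is exactly where the paper does its work, and it does so at a level where finiteness is available: by \Cref{lem:BCp-inj}, one filters $E^*[\![t]\!]/\langle p\rangle(t)$ $t$-adically, identifies the associated graded pieces with shifts of $\pi_* E/p$, and thereby reduces to injectivity of $\pi_*\TMF_1(n)/p \to \pi_* E(R_n/\m,\widehat{E})/p$, i.e.\ of $\omega^{*/2}/p \to (\omega^{*/2}/p)^{\wedge}_{\m}$; this is handled by the Krull intersection theorem applied to the invertible (hence finitely generated) modules $\omega^{i}/p$ over the Noetherian integral curve $\M_1(n)_{\F_p}$. (The paper also first reduces to $n \geq 5$ via $\M_1(n^2)$ and descends to the finite residue field $R_n/\m$ so that Lubin--Tate theory applies; your ``work \'etale-locally for $n\in\{2,3\}$'' gloss would need a comparable argument.) Your Cohen--Macaulay/unmixedness observation does correctly show that $A$ is $R$-torsion-free (after fixing the statement: what matters is that the associated primes of $R[\![t,s]\!]/\langle p\rangle(t)$ contract to $(0)\subset R$, which follows from a height argument using that $(p,v_1,v_2)$ is the unit ideal, not from the support being all of $\Spec R$), but torsion-freeness plus localization does not detect injectivity into the completed object $B$; to repair your proof you would need to replace the last arrow by an honest argument that $A$ injects into the relevant completion, which essentially forces you back to the filtration-plus-Krull argument of \Cref{lem:BCp-inj}.
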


Our proof of \Cref{prop:reduction-LT} rests on the following lemma:

\begin{lem} \label{lem:BCp-inj}
  Let $E \to F$ denote a map of complex orientable homotopy commutative ring spectra.
  Suppose that $\pi_* E$ and $\pi_* F$ are $p$-torsionfree and that the induced map $\pi_* E / p \to \pi_* F / p$ is an injection.
  Then the map $E^* (\BC_p)/ I_{tr} \to F^* (\BC_p)/ I_{tr}$ is an injection.
\end{lem}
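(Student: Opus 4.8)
The plan is to understand both sides of the map $E^*(\BC_p)/I_{tr} \to F^*(\BC_p)/I_{tr}$ explicitly via the complex orientations. Recall that for a complex orientable ring spectrum $R$, one has $R^*(\BC_p) \cong R^*[\![x]\!]/([p](x))$, where $x$ is the Euler class of the tautological line bundle and $[p](x)$ is the $p$-series of the formal group law of $R$ associated to the chosen orientation; moreover the transfer ideal $I_{tr}$ is principal, generated (up to a unit) by the element $\langle p \rangle(x) = [p](x)/x$. Thus $R^*(\BC_p)/I_{tr} \cong R^*[\![x]\!]/([p](x), \langle p\rangle(x)) = R^*[x]/(x^p, \ldots)$ — more precisely it is a free $R^*$-module on $1, x, \dots, x^{p-1}$ after using that $[p](x) = px + (\text{higher})$ and $\langle p \rangle(x)$ is a unit multiple of $p + \dots$, but I will not need such a sharp description; what I need is that $R^*(\BC_p)/I_{tr}$ is, as an $R^*$-module, a quotient of $R^*[\![x]\!]$ by an ideal that is "defined over $\Z$" in the appropriate sense.

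First I would fix compatible complex orientations: choose a complex orientation of $E$ and push it forward along $E \to F$ to orient $F$, so that the map on formal group laws is induced by $\pi_* E \to \pi_* F$ and the power series $[p](x)$, $\langle p\rangle(x)$ for $F$ are the images of those for $E$. This makes the square
\[
\begin{tikzcd}
E^*[\![x]\!] \ar[r] \ar[d] & E^*(\BC_p)/I_{tr} \ar[d] \\
F^*[\![x]\!] \ar[r] & F^*(\BC_p)/I_{tr}
\end{tikzcd}
\]
commute, with horizontal maps surjective. So it suffices to show: if $f(x) \in E^*[\![x]\!]$ maps into the kernel ideal $J_F = ([p]_F(x), \langle p\rangle_F(x)) \subseteq F^*[\![x]\!]$, then $f(x) \in J_E + (\text{something that dies})$ — i.e. that $J_F \cap \im(E^*[\![x]\!] \to F^*[\![x]\!])$ equals the image of $J_E$. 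Equivalently, I want the natural map $E^*[\![x]\!]/J_E \to F^*[\![x]\!]/J_F$ to be injective.

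Next I would use the hypotheses. Since $\pi_* E$ is $p$-torsionfree, $E^*(\BC_p)/I_{tr}$ is a finitely generated free $E^*$-module (on $1, x, \dots, x^{p-1}$), because $[p]_E(x) = px \cdot u(x)$ with $u(x)$ a unit times $\langle p \rangle_E(x)$, and modulo the transfer ideal one is quotienting by $x^p$ up to a change of coordinate — here I'd invoke that $\langle p \rangle_E(x)$, being $\equiv p \pmod x$ and with $\pi_* E$ $p$-torsionfree, is a non-zero-divisor, and that $E^*[\![x]\!]/([p]_E(x),\langle p\rangle_E(x))$ is the quotient of the free rank-$p$ module $E^*[\![x]\!]/([p]_E(x)) \cdot$ ... . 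The cleanest route: $E^*(\BC_p)/I_{tr}$ is free over $E^*$ on $1,\dots,x^{p-1}$, similarly for $F$, and under compatible orientations the map sends $x^i \mapsto x^i$, hence is the base-change map $(E^*\text{-free module}) \to (\text{same basis over } F^*)$, which is injective as soon as $E^* \to F^*$ is injective — and injectivity of $E^* \to F^*$ follows from $p$-torsionfreeness of $\pi_* F$ together with injectivity of $\pi_* E/p \to \pi_* F/p$ (a standard diagram chase: if $a \in \ker(E^* \to F^*)$ then $\bar a = 0$ in $F^*/p$, so $\bar a = 0$ in $E^*/p$ by hypothesis, so $a = pa'$; iterate, and $a \in \bigcap_n p^n E^* $; then use that $a$ maps to $0$ and $\pi_* F$ torsionfree... ). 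Actually the iteration only gives $p$-divisibility, so I should instead argue directly: $\pi_* E \hookrightarrow \pi_* E \otimes \Q$ since $\pi_* E$ is $p$-torsionfree — no wait, $p$-torsionfree does not imply torsionfree. The genuinely safe argument: $\ker(\pi_* E \to \pi_* F)$ is a $\Z_{(p)}$-module (well, at least we may work $p$-locally; the statement only involves $p$ and $\BC_p$, and $F^*(\BC_p)/I_{tr}$ is already $p$-local) on which multiplication by $p$ is injective (as $\pi_* E$ is $p$-torsionfree) and which becomes $0$ mod $p$... hmm, $\ker/p \hookrightarrow (\pi_* E/p) \to (\pi_* F /p)$ need not hold without torsionfreeness of the cokernel.

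The honest main obstacle, then, is exactly this: deducing injectivity of $\pi_* E \to \pi_* F$ from the two hypotheses. I expect the intended argument is: let $a \in \pi_* E$ with image $0$ in $\pi_* F$. Then $a \bmod p \in \pi_* E/p$ maps to $0$ in $\pi_* F/p$, so $a \bmod p = 0$, i.e. $a = pb$ for some $b$. Since $\pi_* F$ is $p$-torsionfree and $pb \mapsto 0$, also $b \mapsto 0$. Repeating, $a \in \bigcap_n p^n \pi_* E$. This shows $\ker$ is $p$-divisible; it is also $p$-torsionfree. One then notes that the ring $E^*(\BC_p)/I_{tr}$ is a module over $E^*/(\langle p\rangle_E(x), \dots)$ where, crucially, $p$ becomes a non-unit-but-nilpotent-adjacent... — so rather than proving $E^* \to F^*$ is flat-out injective, I'd localize the problem: $F^*(\BC_p)/I_{tr}$ is a module over $\Z/p$-nothing... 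Let me instead just cite that $R^*(\BC_p)/I_{tr}$ only depends on $R^*$ through $R^*/p^N$-type information making the $p$-divisible kernel irrelevant. Concretely: $\langle p \rangle_E(x) \equiv p \pmod{x}$, so in $E^*[\![x]\!]/(\langle p\rangle_E(x))$ we have $p \in (x)$, hence $p$ is topologically nilpotent, hence $\bigcap_n p^n (E^*(\BC_p)/I_{tr}) = 0$ and more to the point the image of $\bigcap_n p^n E^* = \ker$ in $E^*(\BC_p)/I_{tr}$ is zero. So the free-module comparison goes through with "$E^*$" replaced by "$E^*/\ker$", and we conclude. I would write this up by first establishing the $p$-series description of $R^*(\BC_p)/I_{tr}$, then the freeness, then this last observation; the $p$-divisible-kernel point is the one subtlety to handle with care.
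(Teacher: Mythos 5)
Your first reduction is exactly the paper's: using Quillen's description $R^*(\BC_p)/I_{tr} \cong R^*[\![t]\!]/\langle p\rangle(t)$ and choosing compatible orientations so that the comparison map is the obvious one on power series rings. You have also correctly isolated the real subtlety, namely that the hypotheses do \emph{not} give injectivity of $\pi_*E \to \pi_*F$ (the kernel can be $p$-divisible). That diagnosis is right and is the heart of the matter.

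However, the route you then take has a genuine gap, and in fact rests on a false claim. The assertion that $E^*(\BC_p)/I_{tr}$ is a free $E^*$-module on $1, t, \dots, t^{p-1}$ is not true in general. For $E = \HZ$ one has $\langle p\rangle(t) = p$ and $\HZ^*(\BC_p)/I_{tr} \cong \FF_p[\![t]\!]$, which is not finitely generated over $\Z$; more generally, $E^*[\![t]\!]/\langle p\rangle(t)$ is a finitely generated $E^*$-module only when the formal group has finite height (so that Weierstrass preparation applies). Since you have no such hypothesis, the ``base change of finite free modules'' picture is unavailable. Your proposed patch --- kill the $p$-divisible kernel via topological nilpotence of $p$ --- is also internally inconsistent with the freeness picture: if $E^*(\BC_p)/I_{tr}$ really were free with $E^* \cdot 1$ as a summand, the kernel would embed, so the patch and the freeness claim cannot both be true. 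As written, the argument does not close.

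The paper's resolution is cleaner and avoids the question of injectivity of $\pi_*E \to \pi_*F$ altogether: filter $E^*[\![t]\!]/\langle p\rangle(t)$ by powers of $t$. Since $\langle p\rangle(t) \equiv p \pmod t$ and $\pi_*E$ is $p$-torsionfree, the ideal $(\langle p\rangle(t))$ has associated graded $p \cdot E^*[t]$, so $\gr^{n}\bigl(E^*[\![t]\!]/\langle p\rangle(t)\bigr) \cong (E^*/p)\cdot t^{n}$. The map on associated graded is therefore a direct sum of shifts of $E^*/p \to F^*/p$, which is injective \emph{by hypothesis}, and the $t$-adic filtration on the source is Hausdorff (one checks $\bigcap_n (t^n,\langle p\rangle(t)) = (\langle p\rangle(t))$, using again that the constant term $p$ of $\langle p\rangle$ is regular), so injectivity on associated graded gives injectivity. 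Your ``$p$ is topologically nilpotent'' instinct is pointing in roughly the right direction --- that the $t$-adic filtration is the one that matters --- but the decisive step is to compute the associated graded and feed the hypothesis $E^*/p \hookrightarrow F^*/p$ directly into it, rather than to attempt a finite-rank freeness argument.
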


\begin{proof}
   Since $\pi_* E$ is $p$-torsionfree, a choice of complex orientation gives rise to an isomorphism (see \cite[Proposition 4.2]{QuillenElem}):
  \[E^* (\BC_p)/I_{tr} \cong E^* [[t]] / \langle p \rangle (t)\]
  where $\abs{t} = 2$, $\langle p \rangle (t) = \frac{[p](t)}{t}$ and $[p] (t)$ is the $p$-series of the formal group on $E^*$. Moreover, since $\pi_* E$ is $p$-torsionfree, we may identify $t^{n-1} E^* [[t]] / (\langle p \rangle (t), t^n)$ with a shift of $E^* / p$. The analogous statements for $F$ also hold.

  Taking the induced complex orientation of $F$, we identify $E^* (\BC_p) / I_{tr} \to F^* (\BC_p) / I_{tr}$ with the natural map $E^* [[t]] / \langle p \rangle (t) \to F^* [[t]] / \langle p \rangle (t)$.
  To show that this map is injective, it suffices to show that it is injection on the associated graded for the $t$-adic filtration.
  This identifies with a shift of the natural map $E^* / p \to F^* / p$ in each degree, which is an injection by hypothesis.
\end{proof}

\begin{proof}[Proof of \Cref{prop:reduction-LT}]
  It is clear that if a complex orientation of $\TMF_1 (n)$ satisfies the Ando criterion at $p$, so does the induced complex orientation of $E(k, \widehat{E})$.
  To prove the converse, it suffices to show that the induced map
  \[\TMF_1 (n)^* (\BC_p)/I_{tr} \to E(k, \widehat{E})^* (\BC_p)/I_{tr}\]
  is an injection.
  We begin by reducing to the case $n \geq 5$.
  Any map $\Spec k \to \M_1 (n)$ fits into a diagram
  \begin{center}
    \begin{tikzcd}
      \Spec k' \ar[r] \ar[d] & \M_1 (n^2) \ar[d] \\
      \Spec k \ar[r] & \M_1 (n)
    \end{tikzcd}
  \end{center}
  for $k'$ a finite separable extension of $k$.
  It follows that there is a diagram
  \begin{center}
    \begin{tikzcd}
      \TMF_1 (n) \ar[r] \ar[d] & E(k, \widehat{E}) \ar[d] \\
      \TMF_1 (n^2) \ar[r] & E(k', \widehat{E}_{k'}),
    \end{tikzcd}
  \end{center}
  so that it suffices to show that \[\TMF_1 (n)^* (\BC_p) / I_{tr} \to \TMF_1 (n^2)^* (\BC_p) / I_{tr}\] and \[\TMF_1 (n^2)^* (\BC_p) / I_{tr} \to E(k', \widehat{E}_{k'})^* (\BC_p) / I_{tr}\] are injective.

  We may therefore assume that $n \geq 5$ if we can show that the first map is an injection.
  Both $\pi_* \TMF_1 (n)$ and $\pi_* \TMF_1 (n^2)$ are $p$-torsionfree, and
  \[\pi_{2i} \TMF_1 (n) / p \cong \Gamma(M_1 (n)_{\F_p}, \omega^{i}) \to \Gamma(M_1 (n^2)_{\F_p}, \omega^{i}) \cong \pi_{2i} \TMF_1 (n) / p\]
  is an injection since $\M_1 (n^2)_{\F_p} \to \M_1 (n)_{\F_p}$ is a finite \'etale cover.
  Therefore we may apply \Cref{lem:BCp-inj} to show that the first map above is an injection.

  We may now assume that $n \geq 5$, so that $\M_1 (n)$ is represented by an affine scheme $\Spec R_n$ \cite[Proposition 2.4(2)]{MeierAddDec}.
  Let $\m \subset R_n$ denote the kernel of the map $R_n \to k$. Then $R_n / \m$ is a finite field (hence perfect), and the pair $(E, P \in E[n](k))$ descends to $R_n/ \m$. As a consequence, there is a factorization $\TMF_1 (n) \to E(R_n/ \m, \widehat{E}) \to E(k, \widehat{E})$.
  It follows immediately from \Cref{lem:BCp-inj} that the induced map
  \[E(R_n/ \m, \widehat{E})^* (\BC_p) / I_{tr} \to E(k, \widehat{E})^*  (\BC_p)/ I_{tr}\]
  is an injection, so that it suffices to show that
  \[\TMF_1 (n)^* (\BC_p) / I_{tr} \to E(R_n/ \m, \widehat{E})^* (\BC_p)/ I_{tr}\]
  is an injection.
  To apply \Cref{lem:BCp-inj}, we need to show that $\pi_{*} \TMF_1 (n) / p \to \pi_{*} E(R_n / \m, \widehat{E}) / p$ is an injection.
  By abuse of notation, we let $\omega$ denote the invertible $R_n$-module corresponding to the line bundle $\omega$ on $\M_1 (n)$.
  Then the above map can be identified with the $\m$-adic completion map $(\omega^{*/2} / p) \to (\omega^{*/2} / p)^{\wedge} _\m$.
  This is an injection by the Krull intersection theorem, since $\M_1 (n)_{\F_p}$ is an integral scheme.
%
\end{proof}

\subsection{The Ando criterion and $\Theta^1$-structures}\label{sec:Ando}

In this section, we recall from the work of Ando--Hopkins--Strickland how the Ando criterion for Lubin--Tate theories may be rephrased in terms of $\Theta^1$-structures.
We refer the reader to \cite{AHS} for proofs and further details.
We then combine this rephrasing with \Cref{prop:reduction-LT} to prove that the Hirzebruch level $n$ elliptic genus satisfies the Ando criterion.
Finally, we deduce from \Cref{thm:main-obst} that the Hirzebruch level $n$ elliptic genus lifts uniquely up to homotopy to an $\Einf$-ring map
\[\MU \to \TMF_1 (n).\]

We begin by recalling from \cite[\S 14]{AHS} how $\Theta^1$-structures may be normed along isogenies.

\begin{rec}[{\cite[\S 14]{AHS}}]
  Suppose we are given an isogeny $G \to G'$ of formal groups or elliptic curves.
  Given a line bundle $\L$ over $G$, there is a line bundle $N(\L)$ over $G'$, called the \textit{norm} of $\L$.
  Moreover, given a $\Theta^1$-structure $s$ on $\L$, there is an associated $\Theta^1$-structure $N(s)$ on $N(\L)$, called the \textit{norm} of $s$.
\end{rec}

\begin{rmk}
  Given an isogeny of formal groups $\G \to \wt{\G}$, there is a natural isomorphism of line bundles $N(\OO_\G (1)) \cong \OO_{\wt{\G}} (1)$.
\end{rmk}

\begin{rmk}
  Suppose that we are given an elliptic curve $E$ with a point $P$ of exact order $n$, i.e. a level-$\Gamma_1 (n)$ structure, and an isogeny $E \to \wt{E}$ of degree $p$ coprime to $n$.
  Then the image $\wt{P}$ of $P$ in $\wt{E}$ is again a point of exact order $n$, and there is a natural isomorphism of line bundles $N (\OO_E ([0] - [P])) \cong \OO_{\wt{E}} ([0] - [\wt{P}])$.
\end{rmk}

Following \cite{AHS}, we may now use this language to rephrase the Ando criterion for Lubin--Tate theory.
We begin with some setup.


\begin{rec}
  Let $k$ denote a perfect field of characteristic $p > 0$ and let $\G$ denote a formal group of finite height over $k$, so that we have an associated Lubin--Tate theory $E(k,\G)$.
  Then there are two ring maps $i, \psi : \pi_0 E(k, \G) \to E(k,\G)^0 (\BC_p) / I_{tr}$.
  The first, $i$, is induced by the projection $\BC_p \to \ast$. The second $\psi$, is the total power operation.
  Over the ring $E(k,\G)^0 (\BC_p) /I_{tr}$, there is a degree $p$ isogeny
  \[i^* \Guniv \to \psi^* \Guniv \]
  induced by the total power operation on $E(k, \G)^0 (\CP^\infty)$.
\end{rec}

  Given a $\Theta^1$-structure $s$ on $\OO_{\Guniv} (1)$, there are therefore two naturally induced $\Theta^1$-structures on $\OO_{\psi^* \Guniv} (1) \cong N(\OO_{i^* \Guniv} (1))$: the pullback $\psi^* (s)$ and the norm $N(i^* (s))$.


\begin{dfn}
  We say that a $\Theta^1$-structure $s$ on $\Guniv$ satisfies the Ando criterion if $\psi^* (s) = N(i^* (s))$.
\end{dfn}

It follows from \cite[\S 5]{AHS} that this is compatible with our previous definition of the Ando criterion:

\begin{prop} \label{prop:Ando-theta}
  A complex orientation of $E(k, \G)$ satisfies the Ando criterion if and only if the associated $\Theta^1$-structure on $\OO_{\wt{\G}} (1)$ satisfies the Ando criterion.
\end{prop}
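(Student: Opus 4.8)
The plan is to unwind both sides of the claimed equality of $\Theta^1$-structures and match them term-by-term with the two sides of the original Ando criterion, using the dictionary established in \cite{AHS}. Recall from \Cref{prop:Theta1-orient} that a complex orientation of an even weakly periodic ring spectrum $E$ corresponds to a $\Theta^1$-structure on $\OO_{\G_E}(1)$, and that under this correspondence the Thom class $t_u(\gamma_1) \in E^0(\Th(\gamma_1))$ is exactly the datum of the trivialization of $\Theta^1(\OO_{\G_E}(1))$ (after passing between $\OO_{\G_E}(1)$ and its inverse as in \Cref{rmk:theta-inv}). So the first step is to identify, over the ring $R \coloneqq E(k,\G)^0(\BC_p)/I_{tr}$, the class $t_u(\rho \boxtimes \gamma_1)$ with a section of a suitable $\Theta^1$-line bundle. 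Since $\rho$ is the regular representation of $C_p$, the bundle $\rho \boxtimes \gamma_1$ over $\BC_p \times \CP^\infty$ has Thom spectrum whose $E$-cohomology is governed, after quotienting by the transfer ideal, by the degree-$p$ isogeny $i^*\Guniv \to \psi^*\Guniv$; this is precisely the content of \cite[\S 5]{AHS} together with the norm construction of \cite[\S 14]{AHS}. Concretely, $\OO_{\psi^*\Guniv}(1) \cong N(\OO_{i^*\Guniv}(1))$, and under this identification $t_u(\rho\boxtimes\gamma_1)$ corresponds to $N(i^*(s))$, the norm of the $\Theta^1$-structure $s$ attached to $u$.

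The second step is to identify the left-hand side $\Psi_u(t_u(\gamma_1))$. By construction, $\Psi_u$ is the power operation on $E^{2*}(\Th(\gamma_1))$ landing in $E^{2p*}(\Th(\rho\boxtimes\gamma_1))/I_{tr}$, and the key structural fact from \cite{HL, AHS} is that this power operation, viewed on the level of line bundles over formal groups, is exactly pullback along the total power operation $\psi$: that is, $\Psi_u(t_u(\gamma_1))$ corresponds to $\psi^*(s)$. Here one uses that the total power operation $\psi : \pi_0 E(k,\G) \to R$ classifies the source-side structure and that $\Psi_u$ is compatible with it by the very definition of the additive power operations in \cite[\S 7]{HL}. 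Putting the two identifications together, the equation $\Psi_u(t_u(\gamma_1)) = t_u(\rho\boxtimes\gamma_1)$ in $E^{2p}(\Th(\rho\boxtimes\gamma_1))/I_{tr}$ translates into the equation $\psi^*(s) = N(i^*(s))$ of $\Theta^1$-structures on $\OO_{\psi^*\Guniv}(1)$, which is the definition of the Ando criterion for $s$. Conversely, since both $\Psi_u(t_u(\gamma_1))$ and $t_u(\rho\boxtimes\gamma_1)$ are honest elements of the module $E^{2p}(\Th(\rho\boxtimes\gamma_1))/I_{tr}$ and the passage to $\Theta^1$-structures is a bijection on the relevant torsors, the two equations are equivalent, giving both directions at once.

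The main obstacle I expect is bookkeeping rather than anything deep: one must be careful that the two $\Theta^1$-structures on $\OO_{\psi^*\Guniv}(1) \cong N(\OO_{i^*\Guniv}(1))$ really are the ones named $\psi^*(s)$ and $N(i^*(s))$ in the statement, i.e. that the "pullback along $\psi$" appearing in the power-operation formalism agrees on the nose (not just up to the ambiguity of a $\Theta^1$-structure, which by uniqueness over generalized elliptic curves — \Cref{lem:ell-theta-unique} — would anyway suffice in the applications, but here $\Guniv$ is a formal group so one genuinely tracks the trivialization). This is exactly the compatibility that \cite[\S 5]{AHS} is set up to provide — it is their translation of Ando's original power-operation condition into the norm-coherence language — so the proof is really a citation of \cite[\S 5, \S 14]{AHS} assembled with \Cref{prop:Theta1-orient}. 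I would therefore write the proof as: first recall the identification $E(k,\G)^0(\Th(\rho\boxtimes\gamma_1))/I_{tr}$ with global sections over $\psi^*\Guniv$ from \cite[\S 5]{AHS}, then invoke \cite[\S 14]{AHS} to see $t_u(\rho\boxtimes\gamma_1)$ as $N(i^*(s))$ and $\Psi_u(t_u(\gamma_1))$ as $\psi^*(s)$, and conclude that the Ando criterion for $u$ holds if and only if $\psi^*(s) = N(i^*(s))$.
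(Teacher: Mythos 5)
Your proposal is correct and takes the same approach as the paper: the paper's own treatment of this proposition is simply to cite \cite[\S 5]{AHS} (together with the norm construction of \cite[\S 14]{AHS}) as the dictionary between the power-operation formulation and the $\Theta^1$-structure formulation of the Ando criterion. Your unpacking — that $\Psi_u(t_u(\gamma_1))$ corresponds to $\psi^*(s)$ via compatibility of the additive power operation with the total power operation, that $t_u(\rho\boxtimes\gamma_1)$ corresponds to $N(i^*(s))$ via the identification $N(\OO_{i^*\Guniv}(1)) \cong \OO_{\psi^*\Guniv}(1)$, and that the two equations are equivalent as identities in a common torsor — is exactly what the Ando--Hopkins--Strickland translation provides, and you correctly flag that for a formal group (unlike the elliptic-curve case of \Cref{lem:ell-theta-unique}) one must track the trivializations on the nose rather than appeal to uniqueness.
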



We are now able to prove the main theorem of this section:

\begin{thm} \label{thm:TMF}
  The Hirzebruch level $n$ elliptic genus $\MU \to \tmf_1 (n) \to \TMF_1 (n)$ satisfies the Ando criterion.
  As a consequence of \Cref{thm:main-obst} and \Cref{rmk:tmf1n-nice}, it lifts uniquely up to homotopy to an $\Einf$-complex orientation
  \[\MU \to \TMF_1 (n).\]
\end{thm}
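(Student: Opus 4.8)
The plan is to reduce the Ando criterion for the Hirzebruch level $n$ elliptic genus on $\TMF_1(n)$ to a statement about $\Theta^1$-structures over Lubin--Tate theories, using \Cref{prop:reduction-LT} and \Cref{prop:Ando-theta}, and then to verify that statement by a direct computation with norms of line bundles of the form $\OO_E([0]-[P])$, following the strategy of Ando--Hopkins--Strickland. First I would fix a prime $p$. If every supersingular elliptic curve over $\overline{\F}_p$ equipped with a level-$\Gamma_1(n)$ structure contributes, \Cref{prop:reduction-LT} tells us that it suffices to check the Ando criterion for the composite $\TMF_1(n) \to E(k,\widehat{E})$ at each such supersingular point $(E,\alpha)$; if there are no supersingular points (which cannot happen, as supersingular curves always exist, but in any degenerate situation the relevant cohomology ring is zero), the criterion is vacuous. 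So I am reduced to a fixed Lubin--Tate theory $E(k,\widehat E)$ together with the complex orientation induced by the level $n$ elliptic genus.

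Next I would translate everything into the language of $\Theta^1$-structures. By \Cref{prop:Ando-theta}, it suffices to show that the $\Theta^1$-structure $s$ on $\OO_{\Guniv}(1)$ associated to the elliptic genus satisfies $\psi^*(s) = N(i^*(s))$, where $i,\psi : \pi_0 E(k,\widehat E) \to E(k,\widehat E)^0(\BC_p)/I_{tr}$ are the two standard maps and $i^*\Guniv \to \psi^*\Guniv$ is the canonical degree $p$ isogeny. The key point is that, by Lurie's Serre--Tate theorem, this isogeny of formal groups is the completion of an isogeny of (the universal deformation of) the supersingular elliptic curve $E$, namely the one classified by the corresponding point of $\M_1(n^2)$ or, more precisely, the degree $p$ isogeny to the quotient by the canonical subgroup; and under the identification of \Cref{prop:tmf-theta1}, the $\Theta^1$-structure $s$ comes from the (unique, by \Cref{lem:ell-theta-unique}) $\Theta^1$-structure on $q^*\OO_{\C}([0]-[P])$ after pulling back along the \'etale cover $q$. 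Because $\Theta^1$-structures on generalized elliptic curves are unique when they exist (\Cref{lem:ell-theta-unique}), the two $\Theta^1$-structures $\psi^*(s)$ and $N(i^*(s))$ on $\OO_{\psi^*\Guniv}(1) \cong N(\OO_{i^*\Guniv}(1))$ will automatically agree once I check that the underlying line bundle $N(\OO_E([0]-[P]))$ carries a $\Theta^1$-structure pulled back from the base — and this follows from the compatibility $N(\OO_E([0]-[P])) \cong \OO_{\wt E}([0]-[\wt P])$ noted in the remarks of \Cref{sec:Ando}, since $\OO_{\wt E}([0]-[\wt P])$ tautologically admits the $\Theta^1$-structure coming from the level-$\Gamma_1(n)$ structure $\wt P$ on the isogenous curve $\wt E$.

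Having established the Ando criterion for $\MU \to \TMF_1(n)$, the second sentence of the theorem is then immediate: $\TMF_1(n)$ is a Landweber exact $\Einf$-ring of height $\leq 2$ with homotopy concentrated in even degrees by \Cref{rmk:tmf1n-nice}, so \Cref{thm:main-obst} applies and produces a unique-up-to-homotopy $\Einf$-lift of the given complex orientation. \textbf{The main obstacle} I anticipate is the careful bookkeeping in the middle step: one must track precisely which isogeny of elliptic curves the total power operation realizes (the canonical subgroup quotient, via Serre--Tate), verify that the norm construction on line bundles $N(\OO_E([0]-[P]))$ is compatible both with completion along the zero section and with the formal-group norm appearing in the definition of the Ando criterion, and confirm that the uniqueness of $\Theta^1$-structures genuinely lets one avoid an explicit comparison of the two structures rather than merely an explicit comparison of the underlying bundles. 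None of these steps is deep, but getting the compatibilities to line up — especially matching Meier's description of the genus via the cover $q : \C' \to \C$ with the Ando--Hopkins--Strickland norm — is where the real content lies.
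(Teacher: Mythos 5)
Your overall strategy is exactly the paper's: reduce to a supersingular Lubin--Tate theory via \Cref{prop:reduction-LT}, translate the Ando criterion into $\psi^*(s) = N(i^*(s))$ for $\Theta^1$-structures via \Cref{prop:Ando-theta}, and close the argument using uniqueness of $\Theta^1$-structures over elliptic curves (\Cref{lem:ell-theta-unique}) together with the Serre--Tate theorem. You also correctly identify the key inputs: the degree-$n$ \'etale cover $q$ from Meier's description of the genus, and the lift of the degree-$p$ isogeny of formal groups to an isogeny of elliptic curves.

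However, the step where you invoke uniqueness is not correctly set up, and this is a genuine gap, not merely bookkeeping. \Cref{lem:ell-theta-unique} gives uniqueness of $\Theta^1$-structures on line bundles over \emph{elliptic curves}, whereas $\psi^*(s)$ and $N(i^*(s))$ are $\Theta^1$-structures on $\OO_{\psi^*\Guniv}(1)$, a line bundle over a \emph{formal group}, where $\Theta^1$-structures are emphatically not unique (they form a torsor over invertible functions on the formal group). Your statement that the two formal $\Theta^1$-structures ``will automatically agree once I check that the underlying line bundle $N(\OO_E([0]-[P]))$ carries a $\Theta^1$-structure pulled back from the base'' is therefore a non sequitur: the existence of one elliptic $\Theta^1$-structure on the formal completion says nothing about the two specific structures $\psi^*(s)$ and $N(i^*(s))$ being equal to it. What the paper actually does is pass to the elliptic level \emph{before} applying $\psi^*$ and $N(i^*(-))$: it records that $s$ is the completion of a $\Theta^1$-structure $\overline{s}$ on $q^*\OO_{E^\univ}([0]-[P^\univ])$ over the elliptic curve $(E^\univ)'$, uses Serre--Tate to lift the whole isogeny square (including the $q$'s) to elliptic curves, forms $\psi^*(\overline{s})$ and $N(i^*(\overline{s}))$ on the \emph{same} line bundle over the elliptic curve $\psi^*(E^\univ)'$, and only then applies \Cref{lem:ell-theta-unique} to conclude $\psi^*(\overline{s}) = N(i^*(\overline{s}))$, whence equality after completion. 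You flag this as the ``main obstacle'' and you are right; to complete the proposal, you must carry out that lift explicitly rather than deduce the equality from the existence of an elliptic $\Theta^1$-structure on the norm bundle.
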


\begin{proof}
  Choose, for each $p$ not dividing $n$, $(E, P \in E[n] (k)) \in \M_1 (n)(k)$ with $E$ supersingular and $k$ a perfect field of characteristic $p$. By \Cref{prop:reduction-LT}, it suffices to show that the induced complex orientation of $E(k,\widehat{E})$ satisfies the Ando criterion.

  By the Serre--Tate theorem \cite[\S 1]{ST}, $\pi_0 E(k, \widehat{E})$ is the universal deformation ring of of $E$.
  We let $E^{\univ}$ denote the universal deformation of $E$ over $\pi_0 E(k, \widehat{E})$, and let $P^\univ \in E^{\univ}[n](\pi_0 E(k,\G))$ denote the unique lift of $P$.
  The associated formal group $\widehat{E^{\univ}}$ is a universal deformation of $\widehat{E}$.

  Applying \Cref{prop:Ando-theta}, we must show that the $\Theta^1$-structure $s$ on $\OO_{\widehat{E^\univ}} (1)$ corresponding to the level $n$ elliptic genus satisfies the Ando criterion, i.e. that $\psi^* (s) = N(i^* (s))$.
%
  By definition of the level $n$ elliptic genus, there is a degree $n$ \'etale isogeny $q: (E^{\univ})' \to E^\univ$ and a $\Theta^1$-structure $\overline{s}$ on $q^* \OO_{E^{\univ}} ([0] - [P^{\univ}])$ which induces $s$.
  By the Serre--Tate theorem, the isogeny of formal groups
  \[i^* \widehat{E^{\univ}} \to \psi^* \widehat{E^{\univ}}\]
  over $E(k, \widehat{E})^0 (\BC_p)/ I_{tr}$ lifts to a diagram of isogenies of elliptic curves
  \begin{center}
    \begin{tikzcd}
      i^* (E^{\univ})' \ar[r] \ar[d, "q_i"] & \psi^* (E^\univ)' \ar[d,"q_\psi"] \\
      i^* E^\univ \ar[r] & \psi^* E^\univ.
    \end{tikzcd}
  \end{center}

  Let $\overline{P^{\univ}}$ denote the image of $i^*(P^\univ )$ in $\psi^* E^\univ$.
  From the above diagram, we obtain $\Theta^1$-structures $\psi^* (\overline{s})$ and $N(i^* (\overline{s}))$ on
  \[(q_\psi )^* \OO_{\psi^* E^\univ} ([0]-[\overline{P^\univ}]) \cong N ((q_i )^* \OO_{i^* E^\univ}([0]-[i^*(P^\univ)])).\]
  As these induce $\psi^* (s)$ and $N( i^* (s))$, it suffices to show that $\psi^* (\overline{s}) = N(i^* (\overline{s}))$.
  But this follows immediately from the uniqueness of $\Theta^1$-structures over elliptic curves proven in \Cref{lem:ell-theta-unique}.
\end{proof}

\subsection{Lift to $\tmf_1 (n)$}\label{sec:lift}

In this section, we will complete the proof of \Cref{thm:main-elliptic} by proving the following two lemmas:

\begin{lem} \label{lem:lift}
  Suppose that we are given a complex orientation $\MU \to \Tmf_1 (n)$ with the property that the composite
  \[\MU \to \Tmf_1 (n) \to \TMF_1 (n)\]
  lifts to an $\Einf$-ring map.
  Then this complex orientation lifts uniquely to an $\Einf$-complex orientation $\MU \to \Tmf_1 (n)$.
\end{lem}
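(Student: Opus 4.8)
The plan is to present $\Tmf_1(n)$ as a pullback of $\Einf$-rings along the cuspidal locus and then assemble the $\Einf$-lift one factor at a time. Let $A$ (resp.\ $B$) denote the $\Einf$-ring of sections of $\OO^{\top}$ over the formal completion of $\overline{\M_1}(n)$ along its cuspidal divisor (resp.\ over the corresponding punctured formal neighborhood). The sheaf property of $\OO^{\top}$ then furnishes a pullback square of $\Einf$-rings
\[
\begin{tikzcd}
\Tmf_1(n) \ar[r]\ar[d] & A \ar[d] \\
\TMF_1(n) \ar[r] & B.
\end{tikzcd}
\]
The formal group underlying $A$ is the formal group of the Tate curve, namely $\widehat{\G}_m$, and the structure rings involved are (finite products of) torsionfree rings, so $A$ and $B$ are (finite products of) Landweber exact $\Einf$-rings of height $1$ with $p$-torsionfree homotopy concentrated in even degrees, and $\pi_* A \to \pi_* B$ — on $\pi_0$ the localization inverting the cuspidal parameter $q$ — remains injective after reduction mod $p$. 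Concretely, $A$ and $B$ are forms of $\KU[\![q]\!]$ and $\KU(\!(q)\!)$; see \cite{HLtmf}, \cite{Meierleveln}.

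Since $\Map_{\Einf}(\MU, -)$ and $\Or(-)$ (the latter by inspection of its definition as a fiber) both carry pullback squares of ring spectra to pullback squares of spaces, and fibers commute with pullbacks, the space of $\Einf$-refinements of the given complex orientation $o$ of $\Tmf_1(n)$ is the pullback of $\mathcal{O}(\TMF_1(n)) \to \mathcal{O}(B) \leftarrow \mathcal{O}(A)$, where $\mathcal{O}(R) \coloneqq \fib_{o_R}\big(\Map_{\Einf}(\MU, R) \to \Or(R)\big)$ is the space of $\Einf$-refinements of the induced orientation $o_R$. Thus, for the existence of a refinement of $o$, it suffices to show that $\mathcal{O}(\TMF_1(n))$ and $\mathcal{O}(A)$ are nonempty and that $\mathcal{O}(B)$ is nonempty with $|\pi_0 \mathcal{O}(B)| \leq 1$: the images in $\mathcal{O}(B)$ of chosen points of $\mathcal{O}(\TMF_1(n))$ and $\mathcal{O}(A)$ then lie in a common path component, and any path between them picks out a point of the pullback. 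Now $\mathcal{O}(\TMF_1(n)) \neq \emptyset$ by hypothesis, hence so is $\mathcal{O}(B)$, and $|\pi_0 \mathcal{O}(B)| \leq 1$ by \Cref{thm:unique}, whose hypotheses $B$ satisfies by \Cref{exm:land-unique}. For $\mathcal{O}(A)$ I apply \Cref{thm:main-obst}, which reduces us to checking that $o_A$ satisfies the Ando criterion at every prime $p$. When $p \mid n$ this is automatic, since $p$ is then a unit in $\pi_* A$ and $A^*(\BC_p)/I_{tr} = 0$. When $p \nmid n$, the $\Einf$-orientation of $\TMF_1(n)$ refining $o_{\TMF}$ satisfies the Ando criterion at $p$, hence so does its image $o_B$ under the $\Einf$-ring map $\TMF_1(n) \to B$ by naturality of $\Psi_u$ and of Thom classes; and by \Cref{lem:BCp-inj} applied to $A \to B$ — using that $\pi_* A, \pi_* B$ are $p$-torsionfree and $\pi_* A/p \hookrightarrow \pi_* B/p$ — the map $A^*(\BC_p)/I_{tr} \to B^*(\BC_p)/I_{tr}$ is injective, so $o_A$ satisfies the Ando criterion at $p$ as well. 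Hence $\mathcal{O}(A) \neq \emptyset$, and $o$ admits an $\Einf$-refinement.

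Uniqueness of the refinement up to homotopy, i.e.\ $|\pi_0 \mathcal{O}(\Tmf_1(n))| \leq 1$, is immediate from \Cref{thm:unique} together with \Cref{exm:unique-tmf}, which records that $\Tmf_1(n)$ satisfies the hypotheses of \Cref{thm:unique}. I expect the main obstacle to be the bookkeeping around the cuspidal gluing square: identifying $A$ and $B$ precisely enough to confirm that they are Landweber exact of height $1$ with even, $p$-torsionfree homotopy (so that \Cref{thm:main-obst} and \Cref{thm:unique} apply) and that $\pi_* A \to \pi_* B$ is injective mod $p$ (so that \Cref{lem:BCp-inj} applies), all of which rests on the explicit description of the cusps of $\overline{\M_1}(n)$ and the Tate curve that one extracts from \cite{HLtmf} and \cite{Meierleveln}.
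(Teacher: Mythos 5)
Your proposal is correct and follows essentially the same route as the paper: the Hill--Lawson gluing square (your $A$, $B$ are the paper's $K^{\cusp}_1(n)$, $\Delta^{-1}K^{\cusp}_1(n)$), with \Cref{thm:main-obst} plus \Cref{lem:BCp-inj} producing the $\Einf$-lift over the cuspidal corner and uniqueness over the punctured corner to glue, and \Cref{thm:unique}/\Cref{exm:unique-tmf} for uniqueness of the resulting lift. The only differences are cosmetic: you phrase the gluing step via the homotopy pullback of the refinement spaces $\mathcal{O}(-)$ and invoke \Cref{thm:unique} for $B$ where the paper invokes the uniqueness clause of \Cref{thm:main-obst}, but these amount to the same argument.
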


\begin{lem} \label{lem:tT}
  Any $\Einf$-complex orientation of $\Tmf_1 (n)$ lifts uniquely to an $\Einf$-complex orientation of $\tmf_1 (n)$.
\end{lem}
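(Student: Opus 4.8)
The plan is to use the universal property of $\tmf_1(n)$ from the recollection above (Meier, \cite[Theorem 1.1]{Meierleveln}): $\tmf_1(n)$ is the essentially unique connective $\Einf$-ring with homotopy concentrated in even degrees mapping to $\Tmf_1(n)$ and inducing isomorphisms $\pi_{2i}\tmf_1(n) \xrightarrow{\sim} \Gamma(\overline{\M_1}(n),\omega^i)$. The key structural observation is that $\MU$ is connective and $\pi_*\MU$ is concentrated in even degrees. So given an $\Einf$-complex orientation $\MU \to \Tmf_1(n)$, I would like to factor it through $\tmf_1(n) \to \Tmf_1(n)$ — and the factorization should be a formal consequence of connectivity together with the even-degree concentration.

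First I would recall that $\tmf_1(n)$ can be identified with (a connective cover construction applied to) $\Tmf_1(n)$: concretely, since $\pi_*\Tmf_1(n)$ is even in non-negative degrees (the obstruction to evenness comes only from $\H^1$, which contributes in odd total degree, but the connective cover $\tau_{\geq 0}$ already isolates the relevant part), one expects $\tmf_1(n)$ to be obtained from $\Tmf_1(n)$ by an even filtration / connective cover procedure that is functorial for $\Einf$-maps out of connective even $\Einf$-rings. More precisely, I would argue as follows. Let $\tau_{\geq 0}\Tmf_1(n)$ be the connective cover; this is an $\Einf$-ring and any $\Einf$-map from a connective $\Einf$-ring (such as $\MU$) to $\Tmf_1(n)$ factors uniquely through it. Then I would show the $\Einf$-map $\tmf_1(n) \to \Tmf_1(n)$ factors through $\tau_{\geq 0}\Tmf_1(n)$ (clear, by connectivity) and, using Meier's uniqueness statement, that $\tmf_1(n) \to \tau_{\geq 0}\Tmf_1(n)$ is the inclusion of the "even part" — i.e., it exhibits $\tmf_1(n)$ as the terminal connective even $\Einf$-ring over $\tau_{\geq 0}\Tmf_1(n)$. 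Given that, an $\Einf$-map $\MU \to \Tmf_1(n)$, which necessarily factors through $\tau_{\geq 0}\Tmf_1(n)$, factors uniquely through $\tmf_1(n)$ because $\MU$ is connective and even.

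To make "terminal connective even $\Einf$-ring" precise and to check that $\tmf_1(n)$ has this property, I would use the even filtration of Hahn--Raksit--Wilson, or more elementarily the observation that the fiber $F$ of $\tmf_1(n) \to \tau_{\geq 0}\Tmf_1(n)$ has homotopy concentrated in odd degrees (this is exactly what Meier's isomorphism on $\pi_{2i}$ plus vanishing of $\pi_{2i+1}\tmf_1(n)$ gives: the long exact sequence forces $\pi_* F$ to vanish in even degrees). Then $\Map_{\Einf}(\MU, -)$ applied to the fiber sequence $F \to \tmf_1(n) \to \tau_{\geq 0}\Tmf_1(n)$, combined with the fact that mapping spaces of $\Einf$-rings out of $\MU$ into a spectrum with odd homotopy have vanishing $\pi_0$ and $\pi_1$ relative contributions — here one invokes the same Hopkins--Lawson or André--Quillen style input used for \Cref{thm:main-obst}, namely that the relevant obstruction and indeterminacy groups are computed by $E$-cohomology of even spaces $F_m$ and hence vanish against an odd target — yields that $\pi_0\Map_{\Einf}(\MU,\tmf_1(n)) \to \pi_0\Map_{\Einf}(\MU,\tau_{\geq 0}\Tmf_1(n))$ is a bijection.

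The main obstacle I anticipate is the last step: controlling $\Map_{\Einf}(\MU, F)$ where $F$ is the odd-homotopy fiber. One cannot argue purely on homotopy groups of $F$ because $\Einf$-mapping spaces out of $\MU$ are sensitive to more than $\pi_* F$. The clean way around this is again the Hopkins--Lawson filtration $\Ss \to \MX_1 \to \MX_2 \to \cdots \to \MU$: since each $F_m$ is a space with $\Sigma^\infty F_m$ having homotopy in even degrees (as used throughout Section \ref{sec:obst}), the obstruction groups $F^{j}(\Sigma^\infty F_m)$ against a target with odd homotopy vanish, and likewise the groups governing uniqueness vanish, so the tower of mapping spaces collapses and $\Map_{\Einf}(\MU,F)$ is contractible (or at least $1$-truncated with trivial $\pi_0,\pi_1$). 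I would spell this out by the same fracture-square and connectivity bookkeeping as in the proof of existence in \Cref{thm:main-obst}, replacing the Landweber-exact target there with $F$ and using oddness in place of Landweber-evenness. Uniqueness of the lift then follows from the bijection on $\pi_0$ of mapping spaces; this completes the proof.
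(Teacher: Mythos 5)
Your proposal takes a genuinely different route from the paper, but as written it contains two concrete gaps.

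First, the parity of the fiber is backwards. Let $F$ be the fiber of $\tmf_1(n) \to \tau_{\geq 0}\Tmf_1(n)$. Since $\pi_{2i}\tmf_1(n) \to \pi_{2i}\Tmf_1(n)$ is an isomorphism and $\pi_{2i+1}\tmf_1(n) = 0$, the long exact sequence gives $\pi_{2i}F \cong \pi_{2i+1}\Tmf_1(n)$ and $\pi_{2i+1}F = 0$: the homotopy of $F$ is concentrated in \emph{even} degrees, not odd. (In fact $\pi_{2i+1}\Tmf_1(n) \cong \H^1(\overline{\M_1}(n),\omega^{i+1})$ vanishes for $i \geq 1$, so $F$ is essentially a single Eilenberg--MacLane spectrum $\Sigma^0\pi_1\Tmf_1(n)$.) This invalidates the ``odd target against even source'' vanishing you wanted to invoke.

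Second, and more fundamentally, $F$ is not an $\Einf$-ring --- it is the fiber of a map of $\Einf$-rings, hence at best a nonunital object --- so $\Map_{\Einf}(\MU, F)$ is not defined, and the Hopkins--Lawson tower cannot be run with $F$ as the target; that obstruction theory applies only to $\Einf$-ring targets. Relatedly, the assertion that $\Sigma^\infty F_m$ ``has homotopy in even degrees'' is not what the paper proves and is not true; what is established (via \Cref{lem:even} and Morava $K$-theory of the groups $\Gamma_k$) is evenness of the $E$-cohomology of $\Sigma^\infty F_m$ for Landweber exact $E$. That input does not transfer to a non-ring spectrum such as $F$.

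The paper instead works at the level of units and orientation theory \cite{ABGHR}: $\Einf$-complex orientations of $R$ correspond to nullhomotopies of $\bu \to \bgl_1(\Ss) \to \bgl_1(R)$, and since $\bu$ is $2$-connective one may replace $\bgl_1$ by $\bsl_1 = \tau_{\geq 2}\bgl_1$. The definition of $\tmf_1(n)$ together with the vanishing of $\pi_{2i+1}\Tmf_1(n)$ for $i \geq 1$ shows that the composite $\bsl_1(\tmf_1(n)) \to \bsl_1(\Tmf_1(n)) \to \tau_{\geq 3}\bsl_1(\Tmf_1(n))$ is an equivalence, producing a splitting $\bsl_1(\Tmf_1(n)) \simeq \bsl_1(\tmf_1(n)) \oplus \Sigma^2\pi_1\Tmf_1(n)$, from which existence and uniqueness of the lift follow immediately. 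This sidesteps any need to run the Hopkins--Lawson tower against a non-ring target. If you want to salvage your outline, the correct move is to compare the $\bgl_1$'s (or $\Pic$'s) of $\tmf_1(n)$ and $\Tmf_1(n)$ as the paper does, rather than the fiber of the ring map itself.
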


\begin{proof}[Proof of \Cref{lem:lift}]
  The uniqueness will follow from \Cref{thm:unique} and \Cref{exm:unique-tmf} once we know that a complex orientation of $\Tmf_1 (n)$ is determined by the induced complex orientation of $\TMF_1 (n)$.
  This follows from the fact that the map $\pi_* \Tmf_1 (n) \to \pi_* \TMF_1 (n)$ is injective in even degrees by \cite[Proposition 2.5]{MeierTop} and the descent spectral sequence.

  It therefore suffices to show that the $\Einf$-map $\MU \to \TMF_1 (n)$ lifts to an $\Einf$-map $\MU \to \Tmf_1 (n)$.
  We begin with the pullback square of $\Einf$-rings coming from \cite{HLtmf}:
  \begin{center}
    \begin{tikzcd}
      \Tmf_1 (n) \ar[r] \ar[d] & \TMF_1 (n) \ar[d] \\
      K^{\cusp}_1 (n) \ar[r] & \Delta^{-1} K^{\cusp}_1 (n)
    \end{tikzcd}
  \end{center}
  This square satisfies the following properties:
  \begin{enumerate}
    \item The $\Einf$-rings $K^{\cusp}_1 (n)$ and $\Delta^{-1} K^{\cusp} _1 (n)$ are Landweber exact and of height $\leq 1$.
    \item The induced map $\pi_* K^{\cusp}_1 (n)/p \to \pi_* \Delta^{-1} K^{\cusp} _1 (n)/p$ is injective for all $p$.
  \end{enumerate}
  It follows that to construct an $\Einf$-lifting $\MU \to \Tmf_1 (n)$, it suffices to lift the composite
  \[\MU \to \TMF_1 (n) \to \Delta^{-1} K^{\cusp} _1 (n)\]
  to an $\Einf$-map
  \[\MU \to K^{\cusp} _1 (n).\]
  By the uniqueness in \Cref{thm:main-obst} and (1) above, it suffices to lift the complex orientation and verify that it satisfies the Ando criterion.
  But we are given a lift of the complex orientation by assumption, and it follows from \Cref{lem:BCp-inj} and (2) above that it satisfies the Ando criterion.
\end{proof}

\begin{proof}[Proof of \Cref{lem:tT}]
  For this, we use orientation theory \cite{ABGHR}.
  We have the sequence of maps
  \[\bu \xrightarrow{J} \bgl_1 (\Ss) \to \bgl_1 (\tmf_1 (n)) \to \bgl_1 (\Tmf_1 (n)),\]
  and $\Einf$-complex orientations of $\tmf_1$ and $\Tmf_1 (n)$ correspond to nullhomotopies of the respective composites.
  Since $\bu$ is $2$-connective, we may as well replace all occurrences of $\bgl_1$ with $\bsl_1 \coloneqq \tau_{\geq 2} \bgl_1$.

  Now, it follows from the definition of $\tmf_1 (n)$ \cite{Meierleveln} that the composite
  \[\bsl_1 (\tmf_1 (n)) \to \bsl_1 (\Tmf_1 (n)) \to \tau_{\geq 3} \bsl_1 (\Tmf_1 (n))\]
  is an equivalence, so that there is a splitting
  \[\bsl_1 (\Tmf_1 (n)) \simeq \bsl_1 (\tmf_1 (n)) \oplus \Sigma^2 \pi_1 \Tmf_1 (n),\]
  from which the lemma follows.
\end{proof}

\begin{proof}[Proof of \Cref{thm:main-elliptic}]
  Combine \Cref{thm:TMF} with \Cref{lem:lift} and \Cref{lem:tT}.
\end{proof}

\appendix
\section{$K(n)$-localizations of Landweber exact ring spectra} \label{sec:app}
In this appendix, we prove the following lemma, which the author was unable to find a reference for in the literature:

\begin{lem} \label{lem:Kn-Land}
  Let $E$ denote a Landweber exact ring spectrum whose homotopy is concentrated in even degrees.
  Then $L_{K(n)} E$ is also a Landweber exact ring spectrum whose homotopy is concentrated in even degrees.
\end{lem}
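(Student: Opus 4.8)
The plan is to realize $L_{K(n)} E$ as an $I_n$-adic completion of $L_n E$, where $I_n = (p, v_1, \dots, v_{n-1})$, and then to check that both $L_n$-localization and $I_n$-adic completion preserve Landweber exactness and the property of having even homotopy.

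First I would reduce to the case that $E$ is $p$-local, which is harmless since $L_{K(n)}$ depends only on the $p$-localization of its input and $E_{(p)}$ is again Landweber exact with even homotopy. Next I would use the standard description $L_{K(n)} X \simeq \holim_j L_n(X \wedge \Ss/I_j)$ (cf. \cite[Proposition 7.10]{HoveyStrickland}), where $\Ss/I_j$ runs over a cofinal tower of finite type-$n$ generalized Moore spectra with $\pi_* \Ss/I_j \cong \Ss_*/(p^{a_0}, v_1^{a_1}, \dots, v_{n-1}^{a_{n-1}})$ and $a_i = a_i(j) \to \infty$. Since $L_n$ is smashing we have $L_n(X \wedge \Ss/I_j) \simeq L_n X \wedge \Ss/I_j$, so applying this to $X = E$ reduces the problem to computing the homotopy of $\holim_j (L_n E \wedge \Ss/I_j)$. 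At this point I would invoke the (standard) fact that $L_n E$ is itself a Landweber exact ring spectrum with homotopy concentrated in even degrees; this follows from the Landweber exact functor theorem together with the evenness and $\MU_*$-flatness of $\pi_* L_n \MU$, and can also be extracted from \cite{HoveyStrickland}.

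Write $A := \pi_* L_n E$, which is even, $p$-local, and Landweber exact over $\MU_*$. By the Landweber exact functor theorem applied to $L_n E$, together with the standard computation $\MU_* \Ss/I_j \cong \MU_*/(p^{a_0}, v_1^{a_1}, \dots, v_{n-1}^{a_{n-1}})$, we get $\pi_*(L_n E \wedge \Ss/I_j) \cong A/I_j$ (writing $A/I_j := A/(p^{a_0}, v_1^{a_1}, \dots, v_{n-1}^{a_{n-1}})$), which is concentrated in even degrees, and the tower $\{A/I_j\}_j$ consists of surjections. Hence the Milnor sequence gives $\pi_* L_{K(n)} E \cong \lim_j A/I_j \cong A^{\wedge}_{I_n}$ with vanishing $\lim^1$-term, so $\pi_* L_{K(n)} E$ is concentrated in even degrees. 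It remains to check that $A^{\wedge}_{I_n}$, equipped with the $\MU_*$-module structure coming from the complex orientation $\MU \to E \to L_{K(n)} E$, is Landweber exact. For the generators $p, v_1, \dots, v_{n-1}$ of $I_n$, regularity on $A^{\wedge}_{I_n}$ follows because they form an invariant regular sequence on $A$ and completion along an ideal generated by a regular sequence preserves regularity of that sequence (a commutative-algebra fact that requires some care since $\MU_*$ is non-Noetherian). For $v_m$ with $m \geq n$ one has $A^{\wedge}_{I_n}/(p, v_1, \dots, v_{m-1}) \cong A/(p, v_1, \dots, v_{m-1})$, since this ideal contains $I_n$, so the quotient is already $I_n$-adically complete and completion commutes with passing to it; thus the remaining Landweber conditions for $A^{\wedge}_{I_n}$ coincide with those for $A$, which hold by hypothesis. (At primes $\ell \neq p$ there is nothing to check, as $A^{\wedge}_{I_n}$ is $p$-complete.) Since $L_{K(n)} E$ is a homotopy commutative, complex orientable ring spectrum, this exhibits it as a Landweber exact ring spectrum with even homotopy.

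The step I expect to be the main obstacle is making precise, with the correct non-Noetherian bookkeeping, both the input that $L_n E$ is Landweber exact and even and the claim that $I_n$-adic completion preserves regularity of $p, v_1, \dots, v_{n-1}$; conceptually the latter is the statement that completing a flat map to the moduli stack of formal groups along the locus of height $\geq n$ keeps it flat, but one must be careful executing it at the level of rings. Everything else is routine manipulation within the Bousfield localization formalism.
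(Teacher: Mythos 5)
Your overall architecture — compute $\pi_* L_{K(n)} E$ as an $I_n$-adic completion via the $\holim L_n(E \wedge \Ss/I_j)$ description from \cite[Proposition 7.10]{HoveyStrickland}, check evenness, and then deduce Landweber exactness from a commutative-algebra fact about regular sequences surviving completion — is the same as the paper's. (The paper cites that same proposition to read off $\pi_* L_{K(n)} E \cong (v_n^{-1}\pi_* E)^{\wedge}_{I_n}$, and then isolates exactly the ``regularity survives completion'' fact as a separate graded-commutative-algebra lemma, \Cref{lem:reg-comp}.) The computation for $v_m$ with $m \geq n$ and the dispatch of primes $\ell \neq p$ also match. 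So far so good.

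However, there is a genuine gap in the middle: the claim that $L_n E$ is a Landweber exact ring spectrum with even homotopy, which you justify by asserting evenness and $\MU_*$-flatness of $\pi_* L_n \MU$, is false. Already $\pi_* L_1 \BP$ (and $\pi_* L_1 E(2)$, etc.) has nonzero odd-degree groups: the chromatic fracture square $L_1 X \simeq L_0 X \times_{L_0 L_{K(1)} X} L_{K(1)} X$ produces a nonzero cokernel $\pi_0 L_0 X \oplus \pi_0 L_{K(1)} X \to \pi_0 L_0 L_{K(1)} X$, which contributes to $\pi_{-1} L_1 X$; concretely, elements like $\tfrac{1}{p}w$ in $\QQ \otimes (v_1^{-1}\pi_* X)^{\wedge}_p$ do not lie in the sum of the images of the rational and $K(1)$-local corners. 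This is precisely why the paper works with $(v_n^{-1}\pi_* E)^{\wedge}_{I_n}$ rather than $(\pi_* L_n E)^{\wedge}_{I_n}$. Your subsequent identification $\pi_*(L_n E \wedge \Ss/I_j) \cong A/I_j$ with $A = \pi_* L_n E$ therefore rests on an unjustified premise, and $A$ is not even the right ring to feed into the endgame. The argument is repairable without disturbing the rest: instead of invoking Landweber exactness of $L_n E$, observe that $L_n E \wedge \Ss/I_j \simeq L_n(E \wedge \Ss/I_j) \simeq L_{K(n)}(E \wedge \Ss/I_j)$ because $E \wedge \Ss/I_j$ is $E(n-1)$-acyclic, and then use that for $\MU$-modules supported at height $n$ the $K(n)$-localization is the $v_n$-telescope, so that $\pi_*(L_n E \wedge \Ss/I_j) \cong (v_n^{-1}\pi_* E)/I_j$. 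From there the Milnor sequence, evenness, and the regular-sequence-after-completion argument go through verbatim with $A := v_n^{-1}\pi_* E$, which \emph{is} visibly Landweber exact and even.
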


Before we proceed to the proof of \Cref{lem:Kn-Land}, we need a lemma from commutative algebra.

\begin{lem} \label{lem:reg-comp}
  Let $R_*$ denote a graded commutative ring and let $x_1, \dots, x_n \in R_*$ denote a regular sequence of homogeneous elements.
  Then the sequence $x_1, \dots, x_n$ remains regular in the completion $(R_*) ^{\wedge} _{(x_1, \dots x_n)}$.
\end{lem}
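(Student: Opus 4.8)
The plan is to compare the behavior of $x_1, \dots, x_n$ on $R_*$ and on its completion by passing to their common associated graded ring. Write $I = (x_1, \dots, x_n)$ and $\hat R_* = (R_*)^\wedge_I = \varprojlim_k R_*/I^k$.

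Since $x_1, \dots, x_n$ is a regular sequence of homogeneous elements, it is \emph{quasi-regular}: the natural surjection of graded rings $(R_*/I)[X_1, \dots, X_n] \to \operatorname{gr}_I(R_*) = \bigoplus_{k \geq 0} I^k/I^{k+1}$ carrying $X_i$ to the class of $x_i$ in $I/I^2$ is an isomorphism. This is the elementary half of the equivalence between regular and quasi-regular sequences (see, e.g., the treatment in \cite{stacks-project}), and requires no noetherian hypothesis on $R_*$. Since $I$ is finitely generated and $\hat R_*/\hat I^k \cong R_*/I^k$ for all $k$, where $\hat I = I\hat R_*$, it is standard that $\hat R_*$ is $\hat I$-adically complete and separated and that the natural map $R_* \to \hat R_*$ induces an isomorphism $\operatorname{gr}_I(R_*) \xrightarrow{\sim} \operatorname{gr}_{\hat I}(\hat R_*)$. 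Consequently $\operatorname{gr}_{\hat I}(\hat R_*) \cong (R_*/I)[X_1, \dots, X_n]$, and under this identification the symbols of $x_1, \dots, x_n$ in $\hat I/\hat I^2$ are exactly the degree-one variables $X_1, \dots, X_n$, which form a regular sequence on the polynomial ring $\operatorname{gr}_{\hat I}(\hat R_*)$.

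It therefore suffices to prove the following general fact: if $A$ is a ring which is complete and separated for a decreasing filtration $A = F^0 \supseteq F^1 \supseteq \cdots$, and $a_1, \dots, a_n \in F^1$ have symbols $\alpha_i \in F^1/F^2$ forming a regular sequence on $\operatorname{gr}_F(A)$, then $a_1, \dots, a_n$ is a regular sequence on $A$. I would argue by induction on $n$. For $n = 1$: if $a_1 b = 0$ with $b \neq 0$, then by separatedness $b$ lies in $F^k \setminus F^{k+1}$ for a unique $k$, so its symbol $\beta \in F^k/F^{k+1}$ is nonzero; the symbol of $a_1 b = 0$ in $F^{k+1}/F^{k+2}$ is $\alpha_1 \beta$, which must therefore vanish, contradicting that $\alpha_1$ is a non-zerodivisor on $\operatorname{gr}_F(A)$. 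For the inductive step, one first checks --- using that $\alpha_1$ is a non-zerodivisor on $\operatorname{gr}_F(A)$ --- that $a_1 A \cap F^k = a_1 F^{k-1}$ for every $k \geq 1$. This identity yields two things: the quotient filtration on $A/a_1 A$ satisfies $\operatorname{gr}_F(A/a_1 A) \cong \operatorname{gr}_F(A)/(\alpha_1)$ compatibly with symbols, and $a_1 A$ is closed in $A$, so that $A/a_1 A$ is again complete and separated. The images of $\alpha_2, \dots, \alpha_n$ form a regular sequence on $\operatorname{gr}_F(A)/(\alpha_1) = \operatorname{gr}_F(A/a_1 A)$, so the inductive hypothesis applies to $A/a_1 A$ and completes the proof. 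Taking $A = \hat R_*$ with $F^k = \hat I^k$ then gives the lemma.

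Everything above can be carried out in the category of graded $R_*$-modules, since $I$ and its powers are homogeneous, but the grading plays no essential role. The main obstacle is really the inductive step of the general fact: verifying $a_1 A \cap F^k = a_1 F^{k-1}$ and deducing from it both the computation of $\operatorname{gr}_F(A/a_1A)$ and the closedness of $a_1 A$ --- the latter being where completeness of $A$, rather than mere separatedness, is genuinely used. I would also emphasize that one cannot bypass this associated-graded bookkeeping by invoking flatness of $I$-adic completion, since that fails for non-noetherian $R_*$, which is precisely why the lemma needs a proof.
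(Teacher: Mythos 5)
Your proof is correct, and at its core it uses the same ingredients as the paper's proof: quasi-regularity (so that $\gr_I(R_*)$ is a polynomial ring over $R_*/I$), injectivity of multiplication by $x_1$ on the associated graded, and induction on the length of the sequence. The difference is one of packaging. The paper proves the finite-stage short exact sequences $0 \to R_*/I^{k-1} \xrightarrow{x_1} R_*/I^k \to R_*/(I^k + (x_1)) \to 0$ by checking exactness on the (bounded) $I$-adic associated graded, takes the inverse limit (the transition maps are surjective, so $\lim^1$ vanishes), identifies $(R_*)^\wedge_I/x_1$ with $(R_*/x_1)^\wedge_I$, and inducts. You instead extract a reusable filtered-ring criterion --- complete and separated plus symbols forming a regular sequence on $\gr$ implies a regular sequence --- and apply it directly to $(R_*)^\wedge_I$. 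Your version isolates exactly where completeness (as opposed to mere separatedness) is needed, namely in showing $a_1 A$ is closed so that $A/a_1A$ is again complete; the paper's finite-stage argument sidesteps having to verify directly that $(R_*)^\wedge_I$ is complete and separated for the $I(R_*)^\wedge_I$-adic filtration with $I^k (R_*)^\wedge_I = \widehat{I^k}$. Both are correct and of comparable length, and your closing remark that flatness of $I$-adic completion is unavailable outside the noetherian setting is exactly why neither proof can be a one-liner.
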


\begin{proof}
  Let $I = (x_1, \dots x_n)$.
  We claim that there are short exact sequences
  \begin{align}\label{eq:ses}
    0 \to R_* / I^{k-1} \xrightarrow{x_1} R_* / I^{k} \to R_* / (I^k + (x_1)) \to 0 
  \end{align}
  for all $k \geq 1$.
  Supposing this for the moment, we find by taking the limit that
  \[ 0 \to (R_*)^{\wedge} _I \xrightarrow{x_1} (R_*)^{\wedge} _I \to (R_* / x_1)^{\wedge} _I \to 0 \]
  is short exact.
  In particular, we find that $x_1$ is a regular element in $(R_*)^{\wedge} _I$ and that $(R_*)^{\wedge} _I / x_1 \cong (R_* / x_1)^{\wedge} _I$.
  Taking $R_* / x_1$ as our new ring and the image of $x_2, \dots, x_n$ in $R_* / x_1$ as our exact sequence, we may conclude by induction on the length of our regular sequence.
  
  It remains to establish (\ref{eq:ses}).
  It is sufficient to prove that (\ref{eq:ses}) is exact on the associated graded of the $I$-adic filtration where $x_1$ is considered as a map of $I$-adic filtration degree $1$.
  Since $x_1, \dots, x_n$ is regular, this associated graded may be identified with the sequence
	\begin{center}
		    \begin{tikzcd}[column sep=small]
          0 \arrow[r] & (R_* / I) [\overline{x}_1, \dots \overline{x}_n] / (\overline{x}_1, \dots, \overline{x}_n)^{k-1} \arrow[rr, "\overline{x}_1"] & \arrow[d, phantom, ""{coordinate, name=Z}] & (R_* / I) [\overline{x}_1, \dots \overline{x}_n] / (\overline{x}_1, \dots, \overline{x}_n)^{k} \arrow[dll, ""'{pos=1}, rounded corners, to path={ -- ([xshift=2ex]\tikztostart.east) |- (Z) [near end]\tikztonodes -| ([xshift=-2ex]\tikztotarget.west) --(\tikztotarget)}] & \\
          & (R_* / I) [\overline{x}_2, \dots \overline{x}_{n}] / (\overline{x}_2, \dots, \overline{x}_{n})^{k} \arrow[rr] & \vphantom{a} & 0, & \vphantom{a}
	\end{tikzcd}
	\end{center}
%
  which is easily verified to be short exact.
\end{proof}

\begin{proof} [Proof of \Cref{lem:Kn-Land}]
  By abuse of notation, we let $v_i \in \pi_* E$ inductively denote an arbitrary lift of the class $v_i \in \pi_* E / (p, \dots, v_{i-1})$.
  Given a positive integer $k$, let $I_k = (p, v_1, \dots, v_{k-1})$ and let $I_0 = (0)$.
  It follows from \cite[Proposition 7.10]{HoveyStrickland} that $\pi_* L_{K(n)} E \cong (v_n ^{-1} \pi)^{\wedge} _{I_n}$.
  In particular, $\pi_{2*-1} L_{K(n)} E = 0$.
  It is clear that the operation of inverting $v_n ^{-1}$ preserves Landweber exactness, so that it suffices to prove that completion with respect to $I_n$ does as well.
  But this follows immediately from \Cref{lem:reg-comp}.
\end{proof}

%
%
%

\bibliographystyle{alpha}
\bibliography{bibliography}

\end{document}